\newcommand{\COLORON}{1}
\newcommand{\NOTESON}{0}
\newcommand{\Debug}{0}
\newcommand{\comment}[1]{}
\newcommand{\COMMENT}[1]{}
\definecolor{darkgray}{rgb}{0.3,0.3,0.3}
\newcommand{\defi}[1]{{\color{darkgray}\emph{#1}}}
\newtheorem{proposition}{Proposition}[section]
\newtheorem{definition}[proposition]{Definition}
\newtheorem{theorem}[proposition]{Theorem}
\newtheorem{corollary}[proposition]{Corollary}
\newtheorem{lemma}[proposition]{Lemma}
\newtheorem{conjecture}{{Conjecture}}[section]
\newtheorem{problem}[conjecture]{{Problem}}
\newtheorem{examp}[proposition]{Example}
\newcommand{\FIG}{0}
\newcommand{\note}[1]{ 

\hspace*{-30pt}
	{\color{blue}  NOTE: \color{Turquoise}{\small  \tt \begin{minipage}[c]{1.1\textwidth}  #1 \end{minipage} \ignorespacesafterend }} 
	
	}
\else \newcommand{\note}[1]{} \fi
\newcommand{\afsubm}[1]{ \ifnum \Debug = 1 {\mymargin{#1}}
\fi} 
\newcommand{\sss}{\ensuremath{\color{red} \bowtie \bowtie \bowtie\ }}
\else \newcommand{\sss}{} \fi
\newcommand{\fig}[1]{Figure ``{#1}''}
\else \newcommand{\fig}[1]{Figure~\ref{#1}} \fi
\renewcommand{\color}[1]{}
\newcommand{\showFig}[2]{
   \begin{figure}[htbp]
   \centering
   \noindent
   \epsfbox{#1.eps}
   \caption{\small #2}
   \label{#1}
   \end{figure}
}
\newcommand{\N}{\ensuremath{\mathbb N}}
\newcommand{\R}{\ensuremath{\mathbb R}}
\newcommand{\Q}{\ensuremath{\mathbb Q}}
\newcommand{\cb}{\ensuremath{\mathcal B}}
\newcommand{\cc}{\ensuremath{\mathcal C}}
\newcommand{\cf}{\ensuremath{\mathcal F}}
\newcommand{\ch}{\ensuremath{\mathcal H}}
\newcommand{\ck}{\ensuremath{\mathcal K}}
\newcommand{\cp}{\ensuremath{\mathcal P}}
\newcommand{\cw}{\ensuremath{\mathcal W}}
\newcommand{\oo}{\ensuremath{\omega}}
\newcommand{\del}{\ensuremath{\delta}}
\newcommand{\eps}{\ensuremath{\epsilon}}
\newcommand{\sig}{\ensuremath{\sigma}}
\newcommand{\sm}{\backslash}
\newcommand{\sydi}{\triangle}
\newcommand{\cls}[1]{\overline{#1}}
\newcommand{\nin}{\ensuremath{{n\in\N}}}
\newcommand{\unin}{\ensuremath{[0,1]}}
\newcommand{\sgl}[1]{\ensuremath{\{#1\}}}
\newcommand{\pth}[2]{\ensuremath{#1}\text{--}\ensuremath{#2}~path}
\newcommand{\edge}[2]{\ensuremath{#1}\text{--}\ensuremath{#2}~edge}
\newcommand{\arc}[2]{\ensuremath{#1}\text{--}\ensuremath{#2}~arc}
\newcommand{\seq}[1]{\ensuremath{(#1_n)_{n\in\N}}} 
\newcommand{\flo}[2]{\ensuremath{#1}\text{--}\ensuremath{#2}~flow} 
\newcommand{\g}{\ensuremath{G\ }}
\newcommand{\G}{\ensuremath{G}}
\newcommand{\ltp}{\ensuremath{|G|_\ell}}
\newcommand{\finl}{\ensuremath{\sum_{e \in E(G)} \ell(e) < \infty}}
\newcommand{\arE}{\vec{E}}
\newcommand{\BM}{Brownian Motion}
\newcommand{\Lr}[1]{Lemma~\ref{#1}}
\newcommand{\Tr}[1]{Theorem~\ref{#1}}
\newcommand{\Sr}[1]{Section~\ref{#1}}
\newcommand{\Prr}[1]{Pro\-position~\ref{#1}}
\newcommand{\Cr}[1]{Corollary~\ref{#1}}
\renewcommand{\iff}{if and only if}
\newcommand{\fe}{for every}
\newcommand{\Fe}{For every}
\newcommand{\st}{such that}
\newcommand{\ti}{there is}
\newcommand{\ta}{there are}
\newcommand{\obda}{without loss of generality}
\newcommand{\wrt}{with respect to}
\newcommand{\leth}{large enough that}
\newcommand{\HM}{Hausdorff measure}
\newcommand{\locon}{locally connected}
\newcommand{\HMT}{Hahn-Mazurkiewicz theorem}
\newcommand{\labtequ}[2]{
 \begin{equation} \label{#1} 	\begin{minipage}[c]{0.9\textwidth}  #2 \end{minipage} \ignorespacesafterend \end{equation} }
\newcommand{\mymargin}[1]{
  \marginpar{%
    \begin{minipage}{\marginparwidth}\small%
      \begin{flushleft}%
        {\color{blue}#1}%
      \end{flushleft}%
   \end{minipage}%
  }%
}%
\newcommand{\mySection}[2]{}
\newcommand{\LemArcC}{\cite[p.~208]{ElemTop}}
\newcommand{\LemArc}[1]{
	\begin{lemma}[\LemArcC]
	\label{#1}
	The image of a topological path with endpoints $x,y$ in a Hausdorff space $X$ contains an arc in $X$ between $x$ and $y$.
	\end{lemma}
}
\newtheorem{question}[conjecture]{{Question}}
\newcommand{\gl}{graph-like}
\newcommand{\Gl}{Graph-like}
\newcommand{\ugl}{uniformly graph-like}
\newcommand{\des}{disconnecting edge-set}
\newcommand{\gms}{geodesic metric space}
\newcommand{\pe}{pseudo-edge}
\newcommand{\arcon}{arcwise connected}
\newcommand{\smp}{strong Markov property}
\newcommand{\hg}{\ensuremath{\widehat{G}}}
\newcommand{\invg}{\ensuremath{\lim{G_i}}}
\newcommand{\Hm}{\ensuremath{\ch^1}}
\newcommand{\gseq}{graph approximation}
\newcommand{\finhm}{\ensuremath{\Hm(X)< \infty}}
\title{On graph-like continua of finite length}
\author{Agelos Georgakopoulos\thanks{Supported by EPSRC grant EP/L002787/1.}\medskip 
\\
  {\small Mathematics Institute, University of Warwick, CV4 7AL, UK}}
\begin{document}
\maketitle

\begin{abstract}
We extend the notion of effective resistance to metric spaces that are similar to graphs but can also be similar to fractals. Combined with other basic facts  proved in the paper, this lays the ground for a construction of \BM\ on such spaces completed in \cite{bmg}.
\end{abstract}

\section{Introduction}

A lot of recent work is devoted to extending fundamental theorems from finite graphs to infinite ones using topology; \cite{RDsBanffSurvey} gives a survey of some 40 papers of this kind. Some further research is emerging that uses this experience in order to extend such theorems to general topological spaces, usually continua \cite{lhom,ChrRiRoPla,ThomassenVellaContinua}. In this context, Thomassen and Vella \cite{ThomassenVellaContinua} introduce the notion of a \defi{\gl} space, defined as a topological space $X$ containing a set $E$ of pairwise disjoint copies of \R, called \defi{edges}, each of which is open in $X$ and has exactly two points in its frontier, \st\ the subspace $X\sm \bigcup E$ is  totally disconnected. We start this paper by observing that, although \gl\ spaces have till now mainly been thought of as generalizations of the Freudenthal compactification of a locally finite graph \cite{ThomassenVellaContinua}, there are more general interesting examples. 

\epsfxsize=.95\hsize
\showFig{examples2}{Examples of \gl\ spaces.}

\fig{examples2} shows two such examples. The first one is the well-known Hawaian earring. The second is obtained from the Sierpinski gasket by replacing each articulation point with a copy of a real closed interval. The latter example suggests that many spaces that are not \gl\ can be deformed into a \gl\ space.

Several well-known graph-theoretic results have already been extended to \gl\ continua \cite{ChrRiRoPla,ThomassenVellaContinua}. 
In this paper we develop tools for studying analytic properties of these spaces. Most importantly, we lay the ground for a construction of \BM\ on \gl\ continua, which is completed in \cite{bmg}. 

Analysis on fractals has attracted a lot of research \cite{Kigami,KumRec,strichartz1999analysis}, the motivation coming both from pure mathematics and mathematical physics \cite{havlin_diffusion_1987,KumRec}. In this framework, \BM\ ---i.e.\ a stochastic process with continuous paths, the \smp, and further properties depending on the context---  plays an important role, but is much harder to construct than the classical Wiener process on \R. There are many  constructions of \BM\ on fractals, most of which are similar to the Sierpinski gasket  \cite{BaPeBro,KumBro,hambly_brownian_1997,Hattori,goldstein_random_1987,kusuoka_lecture_1993}. In comparison, we do not require our space to have any self-similarity or homogeneity properties, but we want it to have finite 1-dimensional \HM\ $\Hm(X)$.

Our first result is that every \gl\ continuum $X$ can be approximated by a sequence \seq{G} of finite graphs, which are topological subspaces of $X$, in the following sense.  
\Fe\ finite set of edges $F$ of $X$, and every component $C$ of $X\sm \bigcup F$, the graph $G_n\cap C$ is connected for almost all $n$ (\Tr{txseq}).  By metrizing them appropriately, and considering \BM\ $B_n$ on each member $G_n$, a \BM\ $B$ on $X$ is obtained in \cite{bmg} as a limit of the $B_n$. 
The hardest task is then to show that $B$ is uniquely determined by $X$ alone, and does not depend in particular on the choice of the sequence $(G_n)$. This involves proving a decomposition theorem for \gl\ spaces, which is one of the main results of this paper (\Sr{pedec}), and is only true when $\Hm(X)< \infty$.

\medskip
Using \gseq s\ \seq{G} as above, we show that if $X$ is a \gl\ continuum with $\Hm(X)< \infty$, then we can associate to any pair of points $p,q\in X$ an effective resistance $R(p,q)$ determined by $X$ alone:
\begin{theorem} \label{rconv}
Let $X$ be a \gl\ space with \finhm\ and let $\seq{G}$ be a \gseq\ of $X$. 
Then \fe\ two sequences \seq{p}, \seq{q} with $p_n,q_n \in G_n$, converging to points $p,q$ in $X$, 
the effective resistance $R_{G_n}(p_n,q_n)$ converges to a value $R(p,q)$ independent of the choice of the sequences $(p_n), (q_n)$ and $(G_n)$.
\end{theorem}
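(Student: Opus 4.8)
The plan is to treat each $G_n$ as a finite electrical network, assigning to every edge of $G_n$ a resistance equal to its length (that is, its $1$--dimensional \HM), and then to combine \Rml\ with the defining property of a \gseq. First, applying that property with $F=\emptyset$ shows that $G_n$ is connected for almost all $n$ (as $X$ is), so we may discard the remaining terms; then \Rml\ --- deleting from $G_n$ every edge outside a fixed $p_n$--$q_n$ path only increases resistances, and a path has resistance equal to its length --- gives $R_{G_n}(p_n,q_n)\le\Hm(G_n)\le\Hm(X)<\infty$. Hence $\bigl(R_{G_n}(p_n,q_n)\bigr)$ ranges in the compact interval $[0,\Hm(X)]$, and it suffices to prove that all of its subsequential limits agree, for every choice of data.

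Next I would reduce the full independence statement to a single \gseq. Given two \gseq s $(G_n)$ and $(G_n')$ with $p_n,q_n\to p,q$ and $p_n',q_n'\to p,q$, interleave them into $(H_n):=(G_1,G_1',G_2,G_2',\dots)$ together with the correspondingly interleaved point sequences $(r_n)\to p$ and $(s_n)\to q$. For each finite edge set $F$, each of $(G_n)$ and $(G_n')$ has the connectedness required in the definition for all but finitely many terms, hence so does $(H_n)$; thus $(H_n)$ is again a \gseq. Consequently, once we know that $R_{H_n}(r_n,s_n)$ converges, its two interleaved subsequences $R_{G_n}(p_n,q_n)$ and $R_{G_n'}(p_n',q_n')$ share that limit. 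So it remains to show: for a single \gseq\ $(G_n)$ and fixed $p_n\to p$, $q_n\to q$, the sequence $R_{G_n}(p_n,q_n)$ converges. (If \gseq s could always be chosen nested this would be immediate from \Rml\ and monotone convergence, but I do not assume that.) I would prove it by showing the sequence is Cauchy.

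To that end, fix $\eps>0$ and take $m\le n$ both large; the goal is $|R_{G_n}(p_n,q_n)-R_{G_m}(p_m,q_m)|\le\eps$. Choose a finite set $F$ of edges of $X$ adapted to $G_m$ --- enough edges to separate the edge-arcs of $G_m$ from one another and from small neighbourhoods of the vertices of $G_m$, and, using $\Hm(X)<\infty$ (which bounds the number of edges above any length threshold and makes the remainder small), fine enough that the part of $X$ not controlled by $F$ contributes less than $\eps$ to any effective resistance. Feeding $F$ into the defining property of $(G_n)$: for all large $n$, every component $C$ of $X\sm\bigcup F$ meets $G_n$ in a connected set. From this I would read off, via \Rml, both inequalities. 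On the one hand, $G_n$ inherits a slight perturbation of the coarse connectivity of $G_m$, and adjoining the rest of $G_n$ only lowers resistances, so $R_{G_n}(p_n,q_n)\le R_{G_m}(p_m,q_m)+\eps$. On the other hand, $G_n$ contains no ``phantom shortcut'': any path in $G_n\subseteq X$ joining two coarse pieces of $G_m$ must run through the finitely many long edges already recorded by $G_m$, while the remaining edges together with the totally disconnected remainder $X\sm\bigcup E$ contribute at most $\eps$ to any effective resistance (e.g.\ because short-circuiting or contracting a family of edges of total length $t$ perturbs every effective resistance by at most $t$), so $R_{G_n}(p_n,q_n)\ge R_{G_m}(p_m,q_m)-\eps$. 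Letting $\eps\to 0$ yields the Cauchy property and the theorem.

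The main obstacle is the lower bound in this last step: understanding how the two subspace-graphs $G_m$ and $G_n$ sit inside the possibly fractal space $X$, and ruling out that $G_n$, or $X$ itself, joins the coarse pieces of $G_m$ through connections of much smaller resistance than $G_m$ records. This is precisely where the hypothesis $\Hm(X)<\infty$ does the work --- it limits the number of non-negligible edges and forces the totally disconnected remainder to carry negligibly little length --- and the technical heart is to make ``negligible'' quantitative and to organise the finitely many pieces that actually matter for the current flow.
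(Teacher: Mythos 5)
Your reduction of the independence statement to convergence of a single sequence by interleaving is exactly the paper's first step, and the overall shape (Cauchy estimate via a finite decomposition of $X$ plus monotonicity of resistance under deletion/contraction) is also the paper's. But the technical heart, which you yourself flag as the main obstacle, contains a claim that is false in general and for which you offer no workable substitute: that for a suitable \emph{finite edge-set} $F$, ``the part of $X$ not controlled by $F$ contributes less than $\eps$ to any effective resistance'', because $\Hm(X)<\infty$ ``forces the totally disconnected remainder to carry negligibly little length''. The totally disconnected remainder can carry a \emph{positive proportion} of $\Hm(X)$: in Example~2 of \Sr{pedec} (\fig{CantorPE}), the edges have total length $1$ while $\Hm(X)=3/2$, the missing $1/2$ sitting in a Cantor set. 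Consequently, no finite (indeed, no) edge-set $F$ makes the components of $X\sm\bigcup F$ small in total measure, and your perturbation bound (``contracting edges of total length $t$ changes resistances by at most $t$'', essentially \Lr{rcontr}) cannot be applied to conclude, since the regions you would need to contract or short-circuit are not of small total measure. The same example shows the quantitative danger in your ``no phantom shortcut'' step: $G_m$ may route through one copy of an interval and $G_n$ through both in parallel, so the resistances they record across the same region genuinely differ, and controlling that difference is precisely where the work lies.

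This is exactly the gap that the paper's \pe\ machinery is built to close, and it occupies most of the paper: \Tr{struct} decomposes $X$ into finitely many pairwise disjoint \pe s $F$ capturing all but $\eps$ of $\Hm(X)$ \emph{and} having total discrepancy $\sum_{f\in F}\bigl(\Hm(f)-d(f^0,f^1)\bigr)<\eps$, while \Lr{rpseudoe} shows that \emph{any} connected subgraph spanning a \pe\ $f$ has effective resistance between $f^0$ and $f^1$ pinned in the interval $[\,2d(f^0,f^1)-\Hm(f),\,\Hm(f)\,]$, i.e.\ determined up to twice the discrepancy independently of which approximating graph one looks at. With that, the paper replaces $G_n\cap f$ by a single edge (\Lr{effres}), contracts the leftover components (legitimate now, since their total measure really is at most $\eps$), and compares all $G_n$, $n$ large, to one fixed reference metric graph via \Lr{rcontr} --- note also that your upper bound ``$R_{G_n}\le R_{G_m}+\eps$ by \Rml'' needs such a common reference object, since $G_n$ is not assumed to contain $G_m$. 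Without a result of the strength of \Tr{struct} and \Lr{rpseudoe} (or an equivalent device), the Cauchy estimate in your sketch does not go through, so the proposal as it stands has a genuine gap rather than being a complete alternative argument.
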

In fact, this $R(p,q)$ is even invariant under local isometry preserving the space (\Cr{lociso}).

\Tr{rconv} is important for the construction of \BM\ on $X$ in  \cite{bmg}, but it may find further applications in studying analytic properties of such spaces, using for example the methods of \cite{gfm} where effective resistance plays an important role.

\medskip
Answering a question of Menger \cite{MenUnt}, Bing \cite{BingPar} and Moise \cite{MoiGri} independently proved that every Peano continuum admits a compatible geodesic metric. \Gl\ continua are Peano by the \HMT\footnote{The \HMT\ \cite{Nadler} states that a metric space is a continuous image of the unit real interval if and only if it is compact, connected, and locally connected.}  and \Tr{glocon}. Thus the following result, which we prove in \Sr{gseqs} using the aforementioned \gseq s, can be viewed as a strengthening of the theorem of Bing and Moise in the \gl\ case.
\begin{theorem} \label{gmsI}
Every \gl\ continuum $X$ admits a compatible metric with respect to which $X$ is a \gms\ and $\Hm(X)$ is finite. 
\end{theorem}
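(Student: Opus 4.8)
The plan is to realise the metric as the intrinsic metric of a summable length assignment to the edges of $X$, using a \gseq\ to control $\Hm$. Since the edges of $X$ are pairwise disjoint nonempty open sets in a separable space, there are only countably many of them; enumerate them $e_1,e_2,\dots$ and fix weights $\ell(e_k)\in\R_{>0}$ with $\sum_k\ell(e_k)<\infty$, e.g.\ $\ell(e_k)\le 2^{-k}$. Identifying each $\overline e$ with $[0,1]$, give a subarc $[a,b]$ of $\overline e$ the length $(b-a)\ell(e)$ and declare the length of an arc (or \topa) in $X$ to be the sum of the lengths of the subarcs in which it meets the edges; this is additive along subarcs and never exceeds $\sum_e\ell(e)$. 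Set
\[
 d(p,q):=\inf\bigl\{\,\mathrm{length}(\alpha):\alpha\text{ an arc in }X\text{ from }p\text{ to }q\,\bigr\}.
\]
By \Tr{glocon}, \gl\ continua are Peano continua, hence \arcon and locally arc-connected; in particular the infimum above is over a nonempty set, so $d$ is finite, and it is clearly symmetric and satisfies the triangle inequality.

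I would first check that $d$ is a metric compatible with the topology of $X$. For $d(p,q)>0$ when $p\neq q$, I would invoke the known fact \cite{ThomassenVellaContinua} that some finite set $F$ of edges separates $p$ from $q$, i.e.\ they lie in distinct components of $X\sm\kreis{F}$; a routine connectedness argument then shows that any arc from $p$ to $q$ contains $\overline e$ for some $e\in F$, so $d(p,q)\ge\min_{e\in F}\ell(e)>0$. Compatibility then reduces to showing that the identity from $X$ with its original topology to $(X,d)$ is continuous: that map is then a continuous bijection from a compact space to a Hausdorff space, hence a homeomorphism \LemInjHomC. For the continuity, given $\eps>0$ and $p\in X$, local arc-connectedness together with $\sum_e\ell(e)<\infty$ lets me pick a neighbourhood $U\ni p$ so small that arcs inside $U$ issuing from $p$ traverse only negligible fractions of the finitely many edges of length $\ge\eps$ and otherwise only edges of total length $<\eps$; for $q\in U$ such an arc exists and certifies $d(p,q)<\eps$.

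Next, $d$ is by construction a length metric --- the length it induces coincides with the length functional above, using that a \topa\ contains an arc between its endpoints of no greater length \LemArcC\ --- and a compact length space is geodesic, a shortest arc arising from an Arzel\`a--Ascoli argument applied to arclength-reparametrised near-minimisers; thus $(X,d)$ is a \gms. For $\Hm(X)<\infty$, note that $\overline e$ has $d$-length at most $\ell(e)$, so $\Hm(\overline e)\le\ell(e)$; moreover, after subdividing, we may take every edge of $G_n$ to be a subarc of an edge of $X$, whence the total $d$-length, and so the $\Hm$-measure, of the finite graph $G_n$ is at most $\sum_e\ell(e)$ for every $n$. The defining property of a \gseq\ then implies $G_n\to X$ in the Hausdorff metric of $(X,d)$ (see below), and Go\l\k{a}b's semicontinuity theorem gives $\Hm(X)\le\liminf_n\Hm(G_n)\le\sum_e\ell(e)<\infty$.

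The hard part will be to justify, from \Tr{txseq}, both that $G_n\to X$ in the Hausdorff metric and that $\Hm(G_n)\le\sum_e\ell(e)$. Both follow from the structural fact that for each $\delta>0$ there is a finite edge set $F$ for which every component of $X\sm\kreis{F}$ has $d$-diameter below $\delta$ --- equivalently, that outside finitely many long edges $X$ breaks into arbitrarily small pieces --- which I would prove by letting $F$ consist of all edges of length above a suitable threshold, so that the edges surviving inside any one component have tiny total length, and using that those components are themselves arc-connected. That same fact also gives a proof of $\Hm(X)<\infty$ bypassing Go\l\k{a}b: cover $X$ by the finitely many long edges, the remaining short edges, and these small pieces, and let $\delta\to0$.
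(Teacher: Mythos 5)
Your construction produces essentially the same metric as the paper's, but by a genuinely different route. The paper fixes a countable \des\ $E$ (\Lr{count}), assigns it summable lengths $\ell'_f$, and realises the resulting length metric as the completion $\widehat{G^f}$ of the metrized \gseq\ from \Tr{txseq}; \Tr{gms} then shows $\widehat{G^f}$ is homeomorphic to $X$ by comparing Cauchy sequences in the two metrics, and obtains the geodesic property from compactness plus the fact that $\widehat{G^f}$ is a length space by definition. You instead define the weighted intrinsic metric directly on $X$ as an infimum of arc-lengths and verify everything on $X$ itself: positivity via separation by a finite edge-set (\Lr{finsep}), compatibility via continuity of the identity together with the compact-to-Hausdorff lemma, the geodesic property again from compactness of a length space, and $\Hm(X)\le\sum_e\ell(e)$ by a covering argument resting on the fact that after removing the finitely many heavy edges every component of the remainder has small weighted diameter (\Lr{finE}, \Lr{cpc} -- note these components are closed in $X$, so \Lr{cpc} applies to them directly). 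What your route buys is independence from the machinery of Section 3: no \gseq\ is needed at all, and your direct covering bound is cleaner than the semicontinuity (Go\l{}ab) detour. What the paper's route buys is economy: it gets \Tr{gmsI} almost for free from objects ($G_n$, $\widehat{G^f}$) that it constructs and needs elsewhere anyway.

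Two points need tightening, though neither is fatal. First, ``the edges of $X$'' are not pairwise disjoint and not countable (any nontrivial open subinterval of an edge is again an edge); you must fix a \des\ $E$ (countable by \Lr{count}) and weight that. Correspondingly, in the positivity argument the separating finite edge-set should either be taken inside $E$, or you should note that since $X\sm\bigcup E$ is totally disconnected every edge contains a nondegenerate subinterval of some weighted edge, so a fully traversed separator edge still forces positive weighted length; and you should allow for the case that $p$ or $q$ lies inside a separator edge, where only a partial traversal is forced (still of positive length). Second, in the Go\l{}ab branch, the claim that after subdividing every edge of $G_n$ is a subarc of an edge of $X$ is false for the graphs of \Tr{txseq}: their edges may run through many elements of $E$ and through the totally disconnected part. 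The bound $\Hm(G_n)\le\sum_e\ell(e)$ still holds, because distinct edges of $G_n$ traverse disjoint portions of the weighted edges, but since your direct covering argument already gives $\Hm(X)\le\sum_e\ell(e)$, that branch is redundant and best dropped.
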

This also suggests that the requirement of satisfying $\Hm(X)<\infty$ which we sometimes impose on our spaces is not very restrictive.

A related fact that we also prove (\Sr{secint}) is
\begin{corollary} \label{corintr}
Let $(X,d)$ be a  \gl\ continuum with $\Hm(X)<\infty$. Then the intrinsic metric  of $X$ is compatible with $d$, and turns $X$ into a \gms.
\end{corollary}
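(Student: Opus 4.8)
The plan is to analyse the intrinsic metric $d_\ell$ of $(X,d)$ directly: for $x,y\in X$ let $d_\ell(x,y)$ be the infimum of the $d$-lengths $\ell_d(A)$ of the arcs $A\subseteq X$ with endpoints $x,y$ (equivalently, the infimum of $\ell_d$ over all continuous paths from $x$ to $y$). Three things have to be established: that $d_\ell$ is finite-valued, that it induces the same topology as $d$, and that $(X,d_\ell)$ is geodesic. Two classical facts carry the argument. First, the $\Hm$-measure of a simple arc in a metric space equals its length, so every arc $A\subseteq X$ satisfies $\ell_d(A)=\Hm(A)\le\Hm(X)<\infty$; since $X$ is a Peano continuum by \Tr{glocon} and the \HMT, it is arcwise connected, and hence $d_\ell$ is finite-valued. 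Second, a continuum $K$ with $\Hm(K)<\infty$ is arcwise connected (Eilenberg--Harrold), and because an arc inside $K$ has $d$-length equal to its $\Hm$-measure, any two points of $K$ are joined by an arc in $K$ of $d$-length at most $\Hm(K)$. Finally, $d\le d_\ell$ holds by the triangle inequality for $d$, so $d_\ell$ is a genuine metric whose topology refines that of $d$.

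The heart of the proof, and the only place where $\Hm(X)<\infty$ is essential, is the converse estimate: that $d_\ell$ is continuous with respect to $d$. Fix $x\in X$ and $\eta>0$. The restriction of $\Hm$ to $X$ is a finite Borel measure, $\Hm(\{x\})=0$, and $\bigcap_{r>0}\overline{B_d(x,r)}=\{x\}$, so by continuity from above there is $r>0$ with $\Hm(\overline{B_d(x,r)})<\eta$. Since $X$ is locally connected (\Tr{glocon}), there is a connected open set $U$ with $x\in U\subseteq B_d(x,r)$, and then a $\delta\in(0,r)$ with $B_d(x,\delta)\subseteq U$. Now $C_x:=\overline U$ is a subcontinuum of $\overline{B_d(x,r)}$, so $\Hm(C_x)<\eta$; by the Eilenberg--Harrold fact every $y\in B_d(x,\delta)\subseteq C_x$ is joined to $x$ by an arc inside $C_x$ of $d$-length at most $\Hm(C_x)<\eta$, whence $d_\ell(x,y)<\eta$. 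Thus the identity map $(X,d)\to(X,d_\ell)$ is continuous, and combined with $d\le d_\ell$ this shows $d_\ell$ and $d$ induce the same topology, i.e.\ $d_\ell$ is compatible with $d$.

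It remains to see that $(X,d_\ell)$ is geodesic. Since it is homeomorphic to the compact space $(X,d)$ it is compact, and $d_\ell$, being a finite induced length metric, is itself a length metric; a compact length space is geodesic. Concretely: set $L:=d_\ell(x,y)$, choose arcs $A_n$ from $x$ to $y$ with $\ell_d(A_n)\to L$, parametrise each $A_n$ on $[0,1]$ proportionally to its $d$-arclength to obtain curves $\gamma_n$ with Lipschitz constant $\ell_d(A_n)$, which are therefore equicontinuous; by Arzel\`a--Ascoli a subsequence converges uniformly to a path $\gamma$ from $x$ to $y$, lower semicontinuity of length gives $\ell_d(\gamma)\le L$, and trivially $\ell_d(\gamma)\ge d_\ell(x,y)=L$, so $\gamma$ is a shortest path in $(X,d_\ell)$. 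I expect the second paragraph to be the main obstacle: the substance there is to convert the purely topological local connectedness of $X$ into a quantitative bound on path length near a point, and the mechanism that achieves this is the observation that arbitrarily small $d$-neighbourhoods of any point sit inside subcontinua of arbitrarily small $\Hm$-measure, on which Eilenberg--Harrold furnishes correspondingly short arcs.
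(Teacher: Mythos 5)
Your argument is correct, but it follows a genuinely different route from the paper. The paper obtains \Cr{corintr} as a byproduct of its graph-approximation machinery: it first shows (\Tr{Xhg}) that $X$ is homeomorphic to the completion $\widehat{G^\ell}$ of a \gseq, then proves in \Sr{secint} --- using the \pe\ decomposition \Tr{struct}, \Lr{pseudo1} and \eqref{comps2} --- that the metric of $\widehat{G^\ell}$ coincides with the intrinsic metric of $X$, and finally invokes the fact that complete, locally compact length spaces are geodesic. You bypass all of this and work directly on $(X,d)$: since $\Hm(X)<\infty$, the restriction of $\ch^1$ to the Borel sets of $X$ is a finite measure, continuity from above gives closed balls of arbitrarily small measure around any point, local connectedness (\Tr{glocon}) places a subcontinuum $\overline{U}$ of small measure inside such a ball, and short arcs inside $\overline{U}$ come from the Eilenberg--Harrold theorem --- or, staying entirely within the paper's toolkit, from \Lr{cpc} applied to the closed connected subspace $\overline{U}$ together with the fact that $\ell(A)=\Hm(A)$ for arcs, so the external citation is not even needed. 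This yields compatibility, and geodesicity follows from the standard compact-length-space/Arzel\`a--Ascoli argument (for the last step you should add the usual concatenation/reparametrisation remark showing the limit curve satisfies $\rho(\gamma(s),\gamma(t))=|s-t|$ for the intrinsic metric $\rho$, not merely that it has minimal $d$-length; this is routine). Your proof is more elementary and more self-contained for this particular corollary, and it uses graph-likeness only through local connectedness and arcwise connectedness, so it would apply more generally; what it does not deliver is the stronger theorem of \Sr{secint} --- that the completion metric of $\widehat{G^\ell}$ is independent of the chosen \gseq\ and equals the intrinsic metric --- which is the statement the paper actually needs for \cite{bmg}, with \Cr{corintr} only a consequence of it there.
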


\section{Preliminaries}

\subsection{Standard definitions and facts}
The \defi{frontier} $\partial Y$ of a subspace $Y$ of a topological space $X$ is the set of points $p\in \overline{Y}$ \st\ every open neighbourhood of $p$ meets $X \sm Y$, where $\overline{Y}$ denotes the closure of  $Y$.

A \defi{continuum} is a compact, connected, non-empty metrizable space (some authors replace `metrizable' by Hausdorff).

An \defi{arc} $R$ is a topological space homeomorphic to the real interval $[0, 1]$. Its \defi{endpoints} are the images of $0$ and $1$ under any homeomorphism from \unin\ to $R$. The endpoints of $R$ will be denoted by $R^0,R^1$ whenever it does not matter which one is which. 

A \defi{topological path} in a space $X$ is a continuous map from a closed real interval to  $X$. 

\LemArc{patharc}

Let $\sigma: [a,b]\to X$ be a topological path in a metric space $(X,d)$. For a finite sequence $S=s_1, s_2, \ldots, s_k$ of points in $[a,b]$, let $\ell(S):= \sum_{1\leq i< k} d(\sigma(s_i),\sigma(s_{i+1}))$, and define the \defi{length} of $\sigma$ to be $\ell(\sigma):=\sup_S \ell(S)$, where the supremum is taken over 
all finite sequences $S=s_1, s_2, \ldots, s_k$ with $a=s_1<s_2< \ldots <s_k=b$. If $R$ is an arc in $(X,d)$, then we define its \defi{length $\ell(R)$} to be the length of a surjective topological path $\sigma:[0,1] \to R$ that is injective on $(0,1)$; it is easy to see that $\ell(R)$ does not depend on the choice of \sig\ \cite{Burago}.

The $n$-dimensional Hausdorff measure of a metric space $X$ is defined by 
$$\ch^n(X):= \lim_{\del \to 0} \ch^n_\del,$$ where
$$\ch^n_\del:=\inf \lbrace \sum_i  diam(U_i)^n \mid \bigcup_i U_i= X, diam(U_i) < \del \rbrace, $$
the infimum taken  over all countable covers \seq{U} of $X$ by sets $U_i$ of diameter less than \del.

It is well-known, and not hard to prove, that if $X$ is an arc then $\Hm(X)=\ell(X)$.

An $x$-$y$~geodesic in a metric space $(X,d)$ is a map $\tau$ from a closed interval $[0,l]\subset \R$ to $X$ such that $\tau(0)=x$, $\tau(l)=y$, and $d(\tau(t),\tau(t'))=|t-t'|$ for all $t,t' \in [0,l]$. If there is an $x$-$y$~geodesic \fe\ two points $x,y\in X$, then we call $X$ a \defi{\gms}, and $d$ a \defi{geodesic} metric.

\subsection{\Gl\ spaces} \label{sgl}

An \defi{edge} of a topological space $X$ is an open subspace $I\subseteq X$ homeomorhpic to the real interval $(0,1)$ \st\ the closure of $I$ in $X$ is homeomorphic to $[0,1]$. Note that the frontier of an edge consists of two points, which we call its \defi{endvertices} or \defi{endpoints}. An edge-set of a topological space $X$ is a set of pairwise disjoint edges of $X$. 
\labtequ{union}{If $E,F$ are finite edge-sets of $X$ then $\bigcup E\cap \bigcup F$ is also a finite edge-set, and $\bigcup E\cup \bigcup F$ is the union of a finite edge-set with a finite set of points.}

A topological space $X$ is \defi{\gl} if \ti\ an edge-set $E$ of $X$ such that $X\sm \bigcup E$ is totally disconnected. In that case, we call $E$ a \defi{\des}. A metric space $X$ is called \defi{\ugl}, if \fe\ \eps\ \ti\ a finite edge-set $S_\eps$ of $X$ \st\ the diameter of every 
component of $X \sm \bigcup S_\eps$ is less than \eps.

Every \ugl\ space is totally bounded, and so it is compact \iff\ it is complete. It follows easily from \eqref{union} that every \ugl\ space is \gl. We will show below (\Tr{glugl}) that the converse also holds for continua. 

We collect some basic facts about \gl\ spaces that will be useful later.

\begin{lemma}\label{count}	
Let $X$ be a graph-like continuum. Then every \des\ of $X$ is countable.
\end{lemma}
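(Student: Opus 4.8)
The plan is to exploit compactness of $X$ together with the fact that each edge in an edge-set is open. First I would fix a \des\ $E$ of $X$, so $X\sm\bigcup E$ is totally disconnected. Since $X$ is metrizable, fix a compatible metric $d$. For each edge $I\in E$ pick its midpoint $m_I$ (the image of $1/2$ under a homeomorphism $[0,1]\to\overline I$) and a small $r_I>0$ \st\ the open ball $B(m_I,r_I)$ is contained in $I$; this is possible because $I$ is open in $X$. The key observation is that these balls are pairwise disjoint: the edges in $E$ are pairwise disjoint, so $B(m_I,r_I)\subseteq I$ and $B(m_J,r_J)\subseteq J$ are disjoint whenever $I\neq J$. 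Thus $\{m_I : I\in E\}$ is a set of points in the compact metric space $X$ that is ``uniformly separated'' in the weak sense that each $m_I$ has a neighbourhood containing no other $m_J$; equivalently, $\{m_I\}$ is a closed discrete subspace of $X$.

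The main step is then: a closed discrete subset of a compact space is finite, and more generally a discrete subset of a separable metric space is countable — and here I can even get finiteness of $\{m_I\}$? No: one should be careful, the midpoints need not form a closed set (their accumulation points would lie in $X\sm\bigcup E$, as in the Hawaiian earring), so $\{m_I\}$ is discrete but not necessarily closed. Hence I would argue via separability instead: a compact metric space is separable, hence second countable, and a discrete subspace of a second countable space is countable. Concretely, $X$ has a countable base $\cb$; for each $I\in E$ the ball $B(m_I,r_I)$ contains some basic open set $U_I\in\cb$ with $m_I\in U_I$, and since the balls are disjoint the assignment $I\mapsto U_I$ is injective. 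Therefore $|E|\le|\cb|\le\aleph_0$.

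The only point that needs a word of care — and which I expect to be the ``main obstacle'', though it is minor — is justifying that each edge really does contain a genuine open ball around an interior point: this is immediate since an edge $I$ is by definition an \emph{open} subspace of $X$ homeomorphic to $(0,1)$, so $I$ is open in $X$ and any point of $I$ has an $X$-open neighbourhood inside $I$, which contains a ball. Note we do not even use the hypothesis that $X\sm\bigcup E$ is totally disconnected, nor that $X$ is connected; we only use that $X$ is a compact metric space and that $E$ consists of pairwise disjoint sets that are open in $X$. In fact the same argument shows every edge-set (not just every \des) of a compact metrizable — indeed of any separable metrizable — space is countable, so the statement could be strengthened; but for the purposes of this paper the stated form suffices.
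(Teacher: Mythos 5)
Your argument is correct, but it is a genuinely different route from the paper's. The paper quotes a claim from Thomassen and Vella (inside their proof of Theorem~2.1) that for every $\eps>0$ only finitely many elements of a \des\ have diameter larger than $\eps$, and then writes $E=\bigcup_{\eps\in\Q}E_\eps$ as a countable union of finite sets; that claim genuinely uses the structure of a \des\ in a continuum (disjoint open sets of large diameter can otherwise accumulate, e.g.\ vertical strips in a square). You instead use only that the members of an edge-set are pairwise disjoint non-empty open subsets of $X$ and that a compact metrizable space is separable, hence second countable, so it cannot contain uncountably many pairwise disjoint non-empty open sets; the midpoint-and-ball setup is harmless but superfluous (you could map each edge directly to a basic open set it contains), and your detour about whether the midpoints form a closed discrete set is correctly resolved. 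Your approach is more elementary in that it cites no external claim, and strictly more general: it shows every edge-set, not just every disconnecting one, of any separable metrizable space is countable, which in particular disposes of the question raised in the remark immediately after the lemma (there is no need to embed an arbitrary edge-set into a \des). What the paper's route buys instead is the quantitative statement that only finitely many edges exceed any given diameter, a fact of independent use for \gl\ continua; your argument does not recover that.
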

\begin{proof}
Let $E$ be a \des\ of $X$. It is not hard to prove   \cite{ThomassenVellaContinua}\footnote{See the claim in the proof of Theorem~2.1.} that \fe\ positive real \eps, the set $E_\eps$ of elements of $E$ with diameter larger than \eps\ is finite. Since $E= \bigcup_{\eps\in \Q}  E_\eps$, it follows that $E$ is countable.
\end{proof}

It might be possible to extend \Lr{count} to edge-sets that are not necessarily disconnecting by proving that  every edge-set of $X$ cointained in a \des, but we will not adress this question here.\sss

\medskip

We say that two points $w,y\in X$ are \defi{separated} by a set $S\subset X$, if \ta\ disjoint open sets $W,Y\subset X$ such that $w\in W$, $y\in Y$, and $X \subset W \cup Y $.

\begin{lemma}[{\cite[Theorem 2.5.]{ThomassenVellaContinua}}]\label{finsep}	
Let $X$ be a graph-like continuum. Then every two points of $X$ are separated by a finite edge-set.\footnote{\cite[Theorem 2.5.]{ThomassenVellaContinua}, asserts that  every two points of $X$ are separated by a finite set of points, but these points are chosen as interior points of edges in their proof.}
\end{lemma}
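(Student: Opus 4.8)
The plan is to fix a \des\ $E$ of $X$ and to work with $V:=X\sm\bigcup E$, which is totally disconnected by hypothesis and, being the complement of the open set $\bigcup E$, is closed in $X$ and hence a compact metric space. Two facts will be used repeatedly: every edge of $E$ has $\partial e\subseteq V$ (an edge of $E$ containing an endvertex of $e$ would be a neighbourhood of it and hence meet $e$, contradicting disjointness of $E$), and, as in the proof of \Lr{count}, for every $\eps>0$ only finitely many edges of $E$ have diameter at least $\eps$. Given distinct points $w,y\in X$, I would treat two cases. If $w$ (say) lies in the interior of some edge $e\in E$, then, using $\overline e\cong[0,1]$ and the fact that $e$ is open in $X$, I pick interior points $m,m'$ of $e$ with $w$ strictly between them on $e$ and $y\notin[m,m']$; the open sub-arc $P$ of $e$ that $m,m'$ bound is then open in $X$, and its closure in $X$ lies inside $\overline e$, so it equals $P\cup\{m,m'\}$. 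Hence $P$ and $X\sm\overline P$ are disjoint open sets of $X$ whose union is $X\sm\{m,m'\}$, the first containing $w$ and the second $y$, so $\{e\}$ is the desired finite edge-set.

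So assume $w,y\in V$. Since $V$ is compact, metric and totally disconnected, there is a relatively clopen set $U\subseteq V$ with $w\in U$ and $y\in V\sm U$; put $\del:=d(U,V\sm U)>0$. Every edge of $E$ with one endvertex in $U$ and the other in $V\sm U$ has diameter at least $\del$, so there are only finitely many such ``crossing'' edges; call this finite edge-set $S$. Set
\[
W:=U\cup\bigcup\{e\in E\sm S:\ \partial e\subseteq U\},\qquad
Y:=(V\sm U)\cup\bigcup\{e\in E\sm S:\ \partial e\subseteq V\sm U\}.
\]
Using $\partial e\subseteq V=U\cup(V\sm U)$ one checks that $W$ and $Y$ are disjoint, that $W\cup Y=X\sm\bigcup S$, and that $w\in W$, $y\in Y$. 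I would then verify that $W$ and $Y$ are closed in $X$: a point of $\overline W\sm W$ cannot be a limit of points lying in $U$, nor of points lying in a single edge of the union (these possibilities would put it in $\overline U=U$ or in some $\overline e\subseteq W$), so it must be a limit of points lying in infinitely many pairwise distinct edges of $E$; such a point lies in $V$, and since those edges have diameter tending to $0$ it is also a limit of their endvertices, hence lies in $\overline U=U$, a contradiction. As $W,Y$ are closed and partition $X\sm\bigcup S$, both are clopen in $X\sm\bigcup S$, and therefore $w$ and $y$ lie in different components of $X\sm\bigcup S$. Replacing each edge of $S$ by an interior point of it and moving the two resulting half-open sub-arcs into $W$ and $Y$ respectively then upgrades this to a separation of $w$ from $y$ by a finite set of interior points of edges of $S$, as required.

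The only genuinely delicate step is the closedness of $W$ and $Y$ in the second case --- equivalently, the assertion that the ``small'' non-crossing edges cannot reconnect the two sides --- and this is exactly where compactness of $V$ together with the shrinking of the edge diameters is needed. The rest is routine point-set topology built on the elementary properties of $V$ and the edges recorded at the outset.
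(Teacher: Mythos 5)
The paper itself contains no proof of this lemma --- it is quoted from Thomassen and Vella, the footnote merely recording that their separating points are interior points of edges --- so your argument has to be judged on its own, and it stands: it is a correct, self-contained proof, following what is essentially the route one would expect behind the citation. You separate $w$ from $y$ by a clopen subset $U$ of the compact, totally disconnected remainder $V=X\sm\bigcup E$ (legitimate by the standard fact that components and quasi-components coincide in compact Hausdorff spaces), invoke the diameter fact underlying \Lr{count} to see that only finitely many edges cross between $U$ and $V\sm U$, and cut those finitely many edges at interior points. The two delicate steps both check out. First, the closedness of $W$ and $Y$: a limit point coming from infinitely many pairwise distinct edges is, because their diameters tend to $0$, also a limit of their endvertices, which lie in the closed set $U$ (resp.\ $V\sm U$), exactly as you argue. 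Second, the final upgrade to open sets, which you state only in one sentence, does work: the complement of each modified side is a finite union consisting of $Y$ (resp.\ $W$) together with, for each crossing edge, a half-open subarc plus its cut point, and the closure of each such piece stays inside the union since the only missing frontier point is the endvertex already lying in $V\sm U$ (resp.\ $U$); hence both sides are open, and they cover $X$ up to the finitely many cut points, which is precisely the notion of separation the paper uses later (e.g.\ in the proof of \Tr{glugl}). Note in passing that connectedness of $X$ forces the crossing edge-set $S$ to be non-empty, so nothing degenerates, and your Case~1 (a point interior to an edge) is handled correctly by cutting that single edge twice.
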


\begin{theorem} [{\cite[Theorem 2.1]{ThomassenVellaContinua}}]\label{glocon}
Every \gl\ continuum is locally connected.
\end{theorem}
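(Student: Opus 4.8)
The plan is to show that $X$ has \emph{property S}: for every $\eps>0$, $X$ is the union of finitely many connected sets of diameter less than $\eps$. Since a metric continuum is locally connected if and only if it has property S (a classical theorem of Sierpi\'nski; see any continuum theory text), this suffices. Two ingredients are needed. The first is a purely topological bound: \emph{if $X$ is connected and $e_1,\dots,e_k$ are pairwise disjoint edges of $X$, then $X\sm(e_1\cup\dots\cup e_k)$ has at most $k+1$ components.} The second is the fact already invoked in the proof of \Lr{count}: for every $\del>0$, only finitely many edges of a \des\ have diameter larger than $\del$.

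For the topological bound I would first treat $k=1$. Here $Y:=X\sm e_1$ is compact (being closed in $X$), so its components coincide with its quasi-components; if $Y$ had at least three components, then, as the component of an endpoint of $e_1$ accounts for at most two of them, some component $C$ would contain neither endpoint $u$ nor endpoint $v$ of $e_1$. Separating $C$ from $u$ and from $v$ by clopen subsets of $Y$ and intersecting them yields a clopen $D$ with $C\subseteq D$ and $u,v\notin D$; then $X\sm D=e_1\cup(Y\sm D)$ is open in $X$ (as $D$ is closed in $X$) and also closed in $X$ (its closure adds only $u,v\in Y\sm D$), so it is a nonempty proper clopen subset of $X$, contradicting connectedness. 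The general case follows by induction: the next edge $e_{i+1}$, being connected and disjoint from $e_1,\dots,e_i$, lies together with its closure in a single component $C$ of $X\sm(e_1\cup\dots\cup e_i)$, is still an edge of the compact connected space $C$, and deleting it splits $C$ into at most two pieces while leaving the other components untouched.

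Now fix $\eps>0$ and a \des\ $E$ of $X$, and put $V:=X\sm\bigcup E$, which is compact and totally disconnected, hence zero-dimensional; so $V$ admits a finite partition $V=V_1\sqcup\dots\sqcup V_r$ into relatively clopen sets of diameter less than $\eps/3$. As these are pairwise disjoint compacta, $\del:=\min_{j\ne j'}d(V_j,V_{j'})>0$. Let $\ck$ be the set of edges of $E$ of diameter at least $\min\{\del/2,\eps/3\}$; it is finite by the second ingredient. Set $X^\ast:=X\sm\bigcup\ck$. Every edge $e\in E\sm\ck$ has diameter $<\del/2<\del$, so both its endpoints lie in the same $V_j$; let $\widehat V_j$ be the union of $V_j$ with all edges of $E\sm\ck$ having both endpoints in $V_j$. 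Then $X^\ast=\widehat V_1\sqcup\dots\sqcup\widehat V_r$, and $\mathrm{diam}(\widehat V_j)\le\mathrm{diam}(V_j)+2\eps/3<\eps$ since every added edge lies within $\eps/3$ of $V_j$. Moreover each $\widehat V_j$ is closed in $X^\ast$: a point $z\in X^\ast\sm\widehat V_j$ either lies in the interior of an edge of some $\widehat V_{j'}$, so has a neighbourhood (inside that edge) missing $\widehat V_j$, or lies in some $V_{j'}$ with $j'\ne j$, in which case $d(z,V_j)\ge\del$ while every point of $\widehat V_j$ is within $\del/2$ of $V_j$, so again $z$ has a neighbourhood missing $\widehat V_j$. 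Being finitely many closed sets partitioning $X^\ast$, the $\widehat V_j$ are clopen in $X^\ast$; hence every component of $X^\ast$ lies in one $\widehat V_j$ and has diameter less than $\eps$, and by the first ingredient $X^\ast$ has at most $|\ck|+1$ components. Finally $X=X^\ast\cup\bigcup_{e\in\ck}\overline e$, and each arc $\overline e$ with $e\in\ck$ is a finite union of subarcs of diameter less than $\eps$; so $X$ is a finite union of connected sets of diameter less than $\eps$.

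The main obstacle is the simultaneous control of the \emph{size} and the \emph{number} of the pieces: deleting every short edge would make all pieces small but might leave infinitely many of them, while deleting only finitely many edges need not shrink the pieces. The point that makes it work is that an edge joining two different pieces $V_j$ is necessarily long, because the $V_j$ are uniformly separated, so only finitely many such edges occur; after that, the topological bound on the number of components caps everything. The one genuinely delicate verification is that the thickened pieces $\widehat V_j$ are clopen in $X^\ast$, and this again reduces to the observation that a short edge cannot reach a distant vertex.
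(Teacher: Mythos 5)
Your argument is correct in its deductions, but keep in mind that the paper offers no proof of this statement at all: it is quoted verbatim from Thomassen and Vella, so what you have produced is a reconstruction of their Theorem~2.1 rather than a rival to anything in the text. The two ingredients you isolate do hold up. The bound that deleting $k$ pairwise disjoint edges from a connected compactum leaves at most $k+1$ components is proved correctly (the quasi-component argument for $k=1$ is fine because $X\sm e_1$ is compact, and the induction is legitimate since the endpoints of $e_{i+1}$ cannot lie inside the other edges, so $\overline{e_{i+1}}$ sits in a single component, which is again a compact connected space with $e_{i+1}$ as an edge); this even sharpens \Lr{finE}. The clopen thickening of the pieces $V_j$ works because a short edge of $E\sm\ck$ has both endpoints in one $V_j$ and stays within $\del/2$ of it, and the reduction of local connectedness to property S is the classical Sierpi\'nski criterion. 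The one substantive caveat is your second ingredient: the statement that for each $\del>0$ only finitely many edges of a \des\ have diameter exceeding $\del$ is precisely the Claim inside Thomassen--Vella's proof of this very theorem (the paper quotes it as known in the proof of \Lr{count}). Using it is not circular --- it has a direct compactness proof that nowhere uses local connectedness --- but it does mean your write-up leaves the genuinely hard compactness step unproved; what you supply is the passage from that claim to local connectedness, organised via property S and a finite clopen partition of the totally disconnected part, instead of the direct construction of small connected neighbourhoods one would find in Thomassen--Vella. If you intend the proof to be self-contained, you must add an argument for that claim. Two cosmetic degenerate cases are worth a sentence: if $E=\emptyset$ then $X$ is totally disconnected and connected, hence a point; and if $r=1$ the minimum defining $\del$ is over an empty set, so set $\del=\infty$ (then $\ck$ is just the set of edges of diameter at least $\eps/3$). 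Neither affects the argument.
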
 

A consequence of this is
\begin{lemma}[{\cite[Corollary 2.4]{ThomassenVellaContinua}}]\label{cpc}	
Let $X$ be a graph-like continuum. Then every closed, connected subspace of $X$ is \arcon.
\end{lemma}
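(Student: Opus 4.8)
Here is how I would approach proving \Lr{cpc} (every closed, connected subspace $Y$ of a \gl\ continuum $X$ is \arcon).

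\medskip

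The plan is to show that $Y$ is itself a \gl\ continuum and then quote \Tr{glocon}. As a closed subset of the compact metrizable space $X$, the space $Y$ is compact and metrizable, and it is connected by hypothesis, so $Y$ is a continuum; it remains to exhibit a \des\ of $Y$. Fix a \des\ $E$ of $X$. For $I\in E$ the set $\overline I\cap Y$ is a closed subset of the arc $\overline I\cong[0,1]$, so $\mathrm{int}_{\overline I}(\overline I\cap Y)$ is a disjoint union of relatively open subintervals. Intersecting each such subinterval $J$ with $I$ deletes at most the two endvertices of $I$ and leaves a nonempty open interval $J_0\subseteq I$; since $J_0$ is open in $I$ and hence in $X$ it is open in $Y$, and since $J_0\subseteq Y$ with $Y$ closed, its closure in $Y$ is the closed interval $\overline{J_0}\cong[0,1]$. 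Thus $J_0$ is an edge of $Y$, and the collection $E'$ of all these intervals $J_0$, over all $I\in E$, is an edge-set of $Y$ (edges coming from one $I$ are disjoint components, and edges coming from distinct elements of $E$ lie in disjoint edges of $X$).

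The crux is to check that $Z:=Y\setminus\bigcup E'$ is totally disconnected; since $Z$ is closed in $Y$, hence compact and metrizable, it suffices to separate any two distinct points $z_0,z_1\in Z$ by a set that is clopen in $Z$ (a space whose points are pairwise separated by clopen sets has singleton quasicomponents, hence singleton components). Two observations. (a) For every $I\in E$ one has $Z\cap I\subseteq\partial(\overline I\cap Y)$, the frontier of $\overline I\cap Y$ in $\overline I$: a point of $I\cap Y$ lying in $\mathrm{int}_{\overline I}(\overline I\cap Y)$ would lie in one of the intervals making up $E'$. Being the frontier of a closed subset of $[0,1]$, this set has empty interior in $\overline I$. (b) \emph{No} edge $I$ of $X$ is contained in $Z$: otherwise, for each $I^{*}\in E$ the set $I\cap I^{*}$ would be open in $I^{*}$ and contained (by (a)) in the empty‑interior set $\partial(\overline{I^{*}}\cap Y)$, hence empty; then $I\subseteq X\setminus\bigcup E$ would be a nondegenerate connected subset of a totally disconnected space. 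Now, given $z_0\neq z_1$ in $Z$, \Lr{finsep} supplies a finite edge-set $S$ separating them, i.e.\ disjoint open sets $W\ni z_0$, $V\ni z_1$ in $X$ with $X\setminus M=W\cup V$, where $M$ consists of one interior point of each edge of $S$. Since the position of a cut point within its edge does not affect the separation (each component of an edge minus an interior point is attached to the rest of $X$ through exactly one endvertex of that edge), and since $I\setminus Z\neq\emptyset$ for each $I\in S$ by (b), we may choose $M\subseteq X\setminus Z$. As $z_0,z_1\notin M$, the set $W$ is clopen in $X\setminus M\supseteq Z$, so $W\cap Z$ is clopen in $Z$, contains $z_0$ and omits $z_1$, as required. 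Hence $Y$ is \gl.

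Finally, being a \gl\ continuum, $Y$ is \locon\ by \Tr{glocon}, hence a Peano continuum; by the \HMT\ it is a continuous image of $[0,1]$, in particular \pacon, and by \Lr{patharc} every two of its points are joined by an arc lying in $Y$. Therefore $Y$ is \arcon.

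\medskip

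I expect the second paragraph to be the only real obstacle. The point is that $Z$ is a union of totally disconnected pieces ($Y$ minus the edges, together with the ``boundary parts'' $Z\cap I$ of the edges of $E$), and such a union need not be totally disconnected, so one genuinely needs the separation \Lr{finsep} to cut $X$ into clopen sets; the single subtlety is arranging the finitely many cut points to miss $Z$, which is exactly what observation (b) provides. Everything else (the construction of $E'$, the reductions to ``$Y$ \gl'' and to a clopen separation, and the concluding chain through \Tr{glocon} and the \HMT) is routine.
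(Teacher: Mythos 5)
The paper does not actually prove this lemma: it is imported verbatim from Thomassen--Vella (their Corollary 2.4), presented as a consequence of \Tr{glocon}. Your route --- show that a closed connected subspace $Y$ is itself a \gl\ continuum, then conclude via \Tr{glocon}, the \HMT\ and \Lr{patharc} --- is the natural way to obtain it, and most of your argument is sound: the construction of the edge-set $E'$ of $Y$, observation (a), observation (b), and the reduction of total disconnectedness of the compact set $Z=Y\setminus\bigcup E'$ to clopen separations are all correct.

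There is, however, one concrete gap, exactly where you predicted the difficulty: the claim that ``the position of a cut point within its edge does not affect the separation'' is false in the generality in which you use it, namely when $z_0$ or $z_1$ is itself an interior point of an edge of $S$ --- a case you cannot exclude, since $Z$ may meet the interiors of edges. For instance, if $X$ is a circle consisting of two edges $I_1\ni z_0$, $I_2\ni z_1$ and $S=\{I_1,I_2\}$, then moving the cut point of $I_1$ across $z_0$ places $z_0$ and $z_1$ in the same component of $X\setminus M'$, so no choice of $W',V'$ separates them. Since observation (b) only guarantees $I\setminus Z\neq\emptyset$, the new cut point might be forced onto the wrong side of $z_0$, and the argument as written breaks down. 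The fix is available from your own observations: every nondegenerate open subinterval of an edge of $X$ is again an edge of $X$, so applying (b) to subintervals shows that $Z\cap I$ has empty interior in \emph{every} edge $I$. Hence, moving one cut point at a time, you may choose the new cut point $m'_I\in I\setminus Z$ inside the subinterval of $I$ between the old cut point $m_I$ and the nearest of $z_0,z_1$ on a chosen side (or the endvertex, if neither lies on that side); then only the points strictly between $m_I$ and $m'_I$ change sides, so $z_0$ and $z_1$ remain separated. With this patch, $M$ can indeed be chosen in $X\setminus Z$ and your proof of \Lr{cpc} goes through.
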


\begin{theorem}\label{glugl}
Let $X$ be a continuum. Then $X$ is \gl\ \iff\ $X$ is \ugl.
\end{theorem}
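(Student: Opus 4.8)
The plan is to prove the non-trivial direction: every \gl\ continuum $X$ is \ugl. Fix a \des\ $E$ of $X$; by \Lr{count} it is countable, say $E=\{e_1,e_2,\dotsc\}$. The natural strategy is to show that, for every $\eps>0$, finitely many of the $e_i$ already suffice to chop $X$ into pieces of diameter $<\eps$. First I would fix a compatible metric $d$ on $X$ and suppose, for contradiction, that some $\eps>0$ has no suitable finite edge-set $S_\eps\subseteq E$. Since $\{e_i : \mathrm{diam}(e_i)>\eps/2\}$ is finite (the claim cited in the proof of \Lr{count}), all but finitely many edges are ``small''; removing the finitely many large ones changes diameters of components in a controlled way, so the obstruction really comes from the small edges, and for those a compactness argument should apply.

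The key steps, in order, are: (1) Record that $X\sm\bigcup E$ is totally disconnected and, being a closed subspace of a compact metric space minus countably many open arcs, is in fact a compact totally disconnected... more carefully, $X\sm\bigcup E$ need not be closed, so instead I would work with the finite truncations: let $F_N=\{e_1,\dotsc,e_N\}$ and $Y_N=X\sm\bigcup F_N$. (2) Show $X\sm\bigcup E=\bigcap_N Y_N$ is totally disconnected, hence by a standard fact in a compact Hausdorff space its quasicomponents equal its components, i.e.\ it is ``totally separated'' in the induced sense; combined with \Lr{finsep}, any two of its points are separated in $X$ by a finite edge-set, and one can take that edge-set inside $E$ after adjusting (this is where \Lr{finsep} and the footnote about interior points of edges are used). (3) Use compactness of $X\sm\bigcup E$ together with a Lebesgue-number / finite-subcover argument: cover it by finitely many open sets of diameter $<\eps/2$ whose boundaries meet only interiors of edges from $E$, take the finite set $S$ of all edges whose closures meet these boundaries, and argue that each component of $X\sm\bigcup S$ lies in one such open set plus controlled overhang along the truncated large edges, hence has diameter $<\eps$. (4) Conclude $S$ (possibly enlarged by the finitely many large edges) works, contradicting the assumption, so $X$ is \ugl.

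The main obstacle I expect is step (3): passing from ``points of the totally disconnected set $X\sm\bigcup E$ can be pairwise separated by finite edge-sets'' to ``a single finite edge-set separates $X$ into small-diameter pieces''. The subtlety is that separating two points is a local statement while the \ugl\ conclusion is global and uniform; one must patch together infinitely many pairwise separations into one finite family, and the natural tool is compactness of $X\sm\bigcup E$ (a $G_\delta$ in a compact metric space, hence Polish, but not obviously compact) — so I would either show $X\sm\bigcup E$ is closed after all (it is, since its complement $\bigcup E$ is open), reducing to a genuinely compact totally disconnected space where clopen sets form a basis, and then lift a finite clopen partition of $X\sm\bigcup E$ of mesh $<\eps/2$ to a finite edge-cut of $X$ via \Lr{finsep} and \eqref{union}. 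Making that lift rigorous — ensuring the lifted separators are edges of the given \des\ and that the overhanging half-edges don't blow up the diameter — is the technical heart of the argument. The converse direction (\ugl\ $\Rightarrow$ \gl) is already noted in the text as following from \eqref{union}, so nothing further is needed there.
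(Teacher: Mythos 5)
Your plan correctly isolates the two ingredients the paper also uses (countability of the \des\ via \Lr{count}, and pairwise separation via \Lr{finsep}), but the decisive step is missing, and you say so yourself: passing from ``any two points can be separated by a finite edge-set'' to ``one finite edge-set leaves only components of diameter $<\eps$'' is the entire content of the theorem, and your step (3) does not supply it. Worse, the specific lift you sketch runs into two concrete problems. First, small connected open sets whose frontiers lie in the interiors of finitely many edges are essentially \Cr{basis}, which the paper \emph{deduces from} \Tr{glugl}; invoking them here would be circular, and \Lr{finsep} alone does not hand them to you without a further compactness argument. Second, lifting a finite clopen partition of the compact set $X\sm\bigcup E$ of mesh $<\eps/2$ to a finite edge cut does not work as described: infinitely many (small) edges of $E$ may have their two endpoints in different partition classes, so ``the set of all edges meeting the boundaries'' need not be finite, and if you cut only finitely many of them, a component of $X\sm\bigcup S$ can chain through arbitrarily many partition classes via the uncut small edges, so its diameter is not controlled by the mesh plus an ``overhang'' term. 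As written, the argument therefore has a genuine gap at its heart.

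For comparison, the paper's proof avoids all of this with a short compactness/contradiction argument: if $X$ is not \ugl, there are $r>0$ and points $x_n,y_n$ in a common component of $X\sm\bigcup E_n$ with $d(x_n,y_n)>r$, where $E_n$ consists of the first $n$ edges of the countable \des\ $E$; accumulation points $x\neq y$ of $(x_n)$ and $(y_n)$ satisfy $d(x,y)\ge r$ and are separated by some finite $E_m$ (\Lr{finsep}), and since every component of $X\sm\bigcup E_n$ with $n\ge m$ lies inside a component of $X\sm\bigcup E_m$, for large $n$ the points $x_n,y_n$ lie on the two disjoint open sides of this separation --- a contradiction. If you insist on a direct construction in the spirit of your plan, it can be repaired pointwise rather than via a partition of $X\sm\bigcup E$: for each $x$, apply \Lr{finsep} to $x$ and each point of the compact set $\{z\in X: d(x,z)\ge\eps/2\}$, take a finite subcover of that set by the ``far'' open sides $Y_{y_1},\dots,Y_{y_k}$, and let $S_x$ be the union of the corresponding finite edge-sets and $W_x:=\bigcap_i W_{y_i}$; then $W_x$ is an open neighbourhood of $x$ contained in $B(x,\eps/2)$, and $X\sm\bigcup S_x\subseteq W_x\cup\bigcup_i Y_{y_i}$ with $W_x$ disjoint from $\bigcup_i Y_{y_i}$, so every component of $X\sm\bigcup S_x$ meeting $W_x$ lies in $W_x$. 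A finite subcover of $X$ by the $W_x$ then shows that the union of the corresponding $S_x$ is a finite edge-set witnessing uniform graph-likeness for this $\eps$. That supplementary argument is exactly what your write-up lacks.
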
 
\begin{proof} 	
The backward direction is easy as remarked above. For the forward direction, let $E$ be an edge-set of $X$ as in the definition of \gl, and recall that $E$ is countable by \Lr{count}. Fix an enumeration $e_1,e_2,\ldots$ of $E$, and let $E_n:= \{e_1,e_2,\ldots, e_n\}$. If $X$ is not \ugl, then for some $r>0$ and \fe\ $n$, \ta\ points $x_n,y_n$ in a common component of $X\sm \bigcup E_n$ \st\ $d(x_n,y_n)>r$. Let $x,y$ be accumulation points of the sequences $(x_n)$ and $(y_n)$ respectively, which exist since $X$ is compact, and note that $d(x,y)\geq r$; in particular, $x\neq y$.

By \Lr{finsep}, \ti\ an $m\in \N$ \st\ $E_m$ separates $x$ from $y$, which means that there are disjoint open sets $W\ni x$ and $Y\ni y$ \st\ $W \cup \bigcup E_n \cup Y = X$. Note that for some large enough $n_0>m$, all $x_n$ lie in $W$ for $n>n_0$ and all $y_n$ lie in $Y$. But this contradicts our assumption that $x_n,y_n$ lie in a common component of $X\sm \bigcup E_n$, since each component of $X\sm \bigcup E_n$ is contained in a component of $X\sm \bigcup E_m$.
\end{proof}

Using \Tr{glugl} we can simultaneously strengthen \Lr{finsep} and \Tr{glocon} as follows
\begin{corollary}\label{basis}
Let $X$ be a \gl\  continuum. Then the topology of $X$ has a basis consisting of connected open sets $O$ \st\ the frontier of $O$ is a finite set of points each contained in an edge.
\end{corollary}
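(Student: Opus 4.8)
The plan is to establish the basis condition pointwise: given $p\in X$ and an open set $U$ with $p\in U$, I will produce a connected open set $O$ with $p\in O\subseteq U$ whose frontier is a finite set of points, each lying in an edge; the collection of all sets arising this way is then the required basis. We may assume $U\neq X$, since otherwise $O:=X$ works (it is connected, being a continuum, and has empty frontier). Thus $K:=X\setminus U$ is a non-empty compact subset of $X$ with $p\notin K$.

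The first step is local separation. For each $q\in K$ we have $q\neq p$, so by \Lr{finsep} (in which, as the accompanying footnote records, the separating points may be taken in the interiors of edges) there are a finite set $T_q$ of points, each contained in an edge, and disjoint open sets $W_q\ni p$ and $V_q\ni q$ with $W_q\cup V_q=X\setminus T_q$. Since $K$ is compact and $\{V_q\}_{q\in K}$ covers it, there is a finite subcover $V_{q_1},\dots,V_{q_k}$. Put $T:=\bigcup_{i=1}^{k}T_{q_i}$, a finite set of points each contained in an edge.

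Now let $O$ be the component of the open set $X\setminus T$ (open because $T$ is finite, hence closed) that contains $p$. By \Tr{glocon} the space $X$ is locally connected, so components of its open subsets are open; hence $O$ is open, and by construction it is connected and contains $p$. Being a component of $X\setminus T$, the set $O$ is also closed in $X\setminus T$, so $\overline O\setminus O\subseteq T$; as $O$ is open this gives $\partial O\subseteq T$, a finite set of points each contained in an edge. It remains to check $O\subseteq U$: for each $i$ we have $X\setminus T\subseteq X\setminus T_{q_i}=W_{q_i}\cup V_{q_i}$, a partition into two disjoint open sets, and since $O$ is connected and $p\in O\cap W_{q_i}$ it must lie wholly in $W_{q_i}$; thus $O\subseteq\bigcap_{i=1}^{k}W_{q_i}$. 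This intersection contains no point $z$ of $K$, because some $V_{q_i}$ contains $z$ and is disjoint from $W_{q_i}$. Hence $O\subseteq X\setminus K=U$, as required.

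The one point requiring care is the shape of $\partial O$: to make the frontier consist of \emph{interior} points of edges we must delete a finite set of such points, which is exactly the form of \Lr{finsep} that we invoke — deleting whole edges, or their closures, would instead leave endvertices in the frontier. Beyond this, the argument is routine point-set topology (components of open subsets of a locally connected space are open, and a connected set meeting one side of a clopen partition lies entirely in that side), so I do not anticipate a genuine obstacle.
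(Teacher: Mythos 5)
Your proof is correct, but it takes a genuinely different route from the paper's. The paper derives \Cr{basis} from \Tr{glugl}: it first upgrades $X$ to a \emph{uniformly} graph-like space, takes the component $C_\eps$ of $X\sm\bigcup S_\eps$ containing the given point (of diameter $<\eps$), and then fattens $C_\eps$ into an open set by attaching a short open interval of each edge of $S_\eps$ incident with it; the frontier of the resulting set then consists of finitely many interior points of edges. You instead work locally: you separate $p$ from each point of the compact set $K=X\sm U$ using the pointwise form of \Lr{finsep} (the footnoted Thomassen--Vella version, with separating points interior to edges), extract a finite subcover, and take the component of the complement of the resulting finite point set $T$ that contains $p$, using \Tr{glocon} to make that component open and the clopen partitions $W_{q_i}\cup V_{q_i}=X\sm T_{q_i}$ to force $O\subseteq U$. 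Your argument bypasses \Tr{glugl} entirely (and hence also \Lr{count}, which feeds into it), and it replaces the paper's ``by elementary topological arguments, extend $C_\eps$ into an open set'' step with a clean and fully justified component argument. What the paper's route buys in exchange is that its basic open sets come with an explicit diameter bound $\eps$ and sit inside the complement of a single finite subset of a fixed disconnecting edge-set, which matches how the basis is used later (e.g.\ in \Tr{Xhg}); but since your sets also have frontier contained in a finite edge-set, they serve equally well for those applications. One small point of care, which you correctly flag: you must use the version of \Lr{finsep} that separates by finitely many \emph{interior points} of edges rather than by whole edges, since deleting closed edges would leave endvertices in the frontier.
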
 
\begin{proof}
By \Tr{glugl}, $X$ is \ugl. The assertion now follows easily from the definition of \ugl: \fe\ $x\in X$ that is not on an edge (other points are easier to handle), let $C_\eps$ be a component of $X \sm \bigcup E$ of diameter at most $\eps$, where $E$ is a finite edge-set. By elementary topological arguments, it is possible to extend $C_\eps$ into an open set $O_\eps$ by uniting it with an open `interval' of diameter $\eps$ of each edge in $E$ that has an endvertex in $C_\eps$. As $\eps$ can be arbitrarily small, these sets $O_\eps$ form a basis.
\end{proof}

\medskip
The following basic fact was observed in \cite[Section~2]{ThomassenVellaContinua} in the case where $|E|=1$, but it is straightforward to extend to arbitrary finite $E$

\begin{lemma} \label{finE}
\Fe\ finite edge-set $E$ of a connected topological space $X$, the subspace $X \sm \bigcup E$ has only finitely many components, each of which is clopen in $X \sm \bigcup E$ and contains a point in $\overline{E}$.
\end{lemma}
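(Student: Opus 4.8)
The plan is to induct on $|E|$, taking the case $|E|=1$ from \cite[Section~2]{ThomassenVellaContinua} as the base. (The case $E=\emptyset$ is vacuous, or to be read with $X$ as its unique component; I would simply exclude it, since a component of $X$ cannot contain a point of $\overline\emptyset$.) Throughout, observe that $\bigcup E$, being a finite union of open edges, is open in $X$, so $Y:=X\sm\bigcup E$ is closed in $X$; hence every component $C$ of $Y$, closed in $Y$, is closed in $X$.

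For the inductive step, fix $e\in E$, put $E':=E\sm\{e\}$ and $Y':=X\sm\bigcup E'$, and apply the induction hypothesis to $E'$: $Y'$ has finitely many components $C_1,\dotsc,C_k$, each meeting $\overline{\bigcup E'}$. The first thing to verify is that $e$ is a genuine edge of one of these. Since $E$ is pairwise disjoint, $e$ is disjoint from $\bigcup E'$, so $e\subseteq Y'$; and the two frontier points $x_e,y_e$ of $e$ in $X$ lie in $Y'$ too, because an edge $e'\in E'$ with $x_e\in e'$ would be an open neighbourhood of $x_e$, hence would meet $e$ (as $x_e\in\partial e$), contradicting $e\cap e'=\emptyset$. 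Therefore $\overline e=e\cup\{x_e,y_e\}$, which is connected, lies inside $Y'$, and so inside the component of $Y'$ containing $e$; relabel this component $C_1$. Then $e$ is open in $C_1$, $e\cong(0,1)$, and the closure of $e$ in $C_1$ is $\overline e\cong[0,1]$ with two frontier points $x_e,y_e$, so $e$ is an edge of the connected space $C_1$. Applying the base case to $(C_1,\{e\})$ gives that $C_1\sm e$ has at most two components, each clopen in $C_1\sm e$ and each containing $x_e$ or $y_e$.

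It remains to assemble. Since $e\subseteq C_1$ we have $Y=Y'\sm e=(C_1\sm e)\sqcup C_2\sqcup\dotsb\sqcup C_k$, and a short argument — any connected subset of $Y$ lies in a single component of $Y'$, hence inside some $C_i$ with $i\ge2$ or inside $C_1\sm e$ — shows that the components of $Y$ are precisely $C_2,\dotsc,C_k$ together with the (at most two) components of $C_1\sm e$. Thus $Y$ has at most $k+1$ components, which is finite; each $C_i$ with $i\ge2$ meets $\overline{\bigcup E'}\subseteq\overline{\bigcup E}$, and each component of $C_1\sm e$ contains $x_e$ or $y_e$, which lies in $\overline e\subseteq\overline{\bigcup E}$; and a space partitioned into finitely many closed sets is partitioned into clopen sets, so each component of $Y$ is clopen in $Y$. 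This closes the induction.

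The step needing the most care is the verification that $e$ is genuinely an edge of $C_1$ — that both frontier points of $e$ survive the removal of $\bigcup E'$ — since this is exactly where the hypothesis that $E$ is a \emph{disjoint} edge-set enters, and without it $C_1\sm e$ could fail to be controlled by the base case. Everything else is soft point-set topology: components are closed, a connected set does not straddle two components, and a finite closed partition is a clopen partition.
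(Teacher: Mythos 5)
Your proof is correct and follows essentially the same route as the paper, which simply cites the $|E|=1$ case from Thomassen--Vella and declares the extension to finite $E$ straightforward: your induction on $|E|$, removing one edge at a time, is precisely that extension. The one point you rightly single out---that the pairwise disjointness of $E$ keeps both frontier points of $e$ inside $X\setminus\bigcup E'$, so that $e$ is still an edge of the component of $X\setminus\bigcup E'$ containing it---is exactly the content the paper leaves implicit, and the remaining bookkeeping (components are closed, finitely many closed parts are clopen, each part meets $\overline{\bigcup E}$) is carried out correctly.
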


\subsection{Metric graphs} \label{Megr}

In this paper, by a \defi{graph} \g we will mean a topological space homeomorhpic to a simplicial 1-complex. We assume that any graph \g is endowed with a fixed homeomorphism $h: K \to \G$ from a simplicial 1-complex $K$, and call the images under $h$ of the 0-simplices of $K$ the \defi{vertices} of \G, and the images under $h$ of the 1-simplices of $K$ the \defi{edges} of \G. Their sets are denoted by \defi{$V(G)$} and \defi{$E(G)$} respectively.  Most graphs considered will be \defi{finite}, that is, they will have finitely many vertices and edges.

A metric graph is a graph \g endowed with an assignment of lengths $\ell: E(G) \to \R_{>0}$ to its edges. This assignment naturally induces a metric $d_\ell$ on \g with the following properties. Edges are locally isometric to real intervals, their lengths (i.e.\ 1-dimensional \HM s) \wrt\ $d_\ell$ coincide with $\ell$, and \fe\ $x,y\in V(G)$ we have $d_\ell(x,y):= \inf_{P \text{ is an \arc{x}{y} }} \ell(P)$, where $\ell(P):= \sum_{P \supseteq e \in E(G)} \ell(e)$; see \cite{ltop} for details on $d_\ell$.

The length $\ell(G)$ of a metric graph \g is defined as $\sum_{e\in E(G)} \ell(e)$.

Note that (metric) graphs are \gl\ spaces, and their edges are also edges in the topological sense of \Sr{sgl}. 

An \defi{interval} of an edge $e$ of a graph or \gl\ space is a connected subspace of $e$.

\section{Approximating \gl\ continua by finite graphs} \label{gseqs}

In this section we show that \gl\ continua can be approximated well by a sequence of finite graphs; more precisely, we have
\begin{theorem} \label{txseq}
\Fe\ \gl\ continuum $X$, \ti\ a sequence of finite graphs \seq{G}, each contained in $X$, with the following property  
\labtequ{comps}{\fe\ finite edge-set $F$ of $X$ and every component $C$ of $X\sm F$, \ti\ a unique component of $G_n\sm F$ meeting $C$ for almost all $n$.}
\end{theorem}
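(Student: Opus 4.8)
The plan is to build each $G_n$ as a finite-graph ``skeleton'' of $X$ coming from a \des\ made fine via uniform graph-likeness, and then to verify \eqref{comps} by matching the combinatorial and the topological component structures. For the construction, fix a \des\ $E=\{e_1,e_2,\dots\}$ of $X$; it is countable by \Lr{count}, and one checks $\overline{\bigcup E}=X$ (otherwise $X\sm\overline{\bigcup E}$ is a nonempty open subset of the totally disconnected set $X\sm\bigcup E$, so by local connectedness, \Tr{glocon}, it contains a nonempty connected open set, which is then a single point, contradicting that the nondegenerate continuum $X$ has no isolated point). Using that $X$ is \ugl\ (\Tr{glugl}) I build finite edge-sets $S_1\subseteq S_2\subseteq\cdots$ with $\bigcup_n S_n=E$ and with every component of $X\sm\bigcup S_n$ of diameter less than $1/n$: at stage $n$ add to the list both $e_n$ and a finite edge-set witnessing uniform graph-likeness for $1/n$, passing to subintervals wherever closures of edges would otherwise overlap (cf.\ \eqref{union}), so that the closures of the edges of $S_n$ meet only in common endpoints; then $H_n:=\bigcup_{e\in S_n}\overline e$ is a finite graph. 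By \Lr{finE} the set $X\sm\bigcup S_n$ has finitely many components $D$, each clopen in the closed set $X\sm\bigcup S_n$, hence closed in $X$, hence a continuum, hence \arcon\ by \Lr{cpc}; obtain $G_n$ from $H_n$ by adding, inside each such $D$, a finite tree of arcs of $D$ joining the finitely many endpoints of edges of $S_n$ that lie in $D$ (the arcs added one at a time, each truncated at its first meeting with what is already built). Then $G_n$ is a finite graph contained in $X$; it is connected (a separation of the ``super-vertices'' $D$ respecting the $S_n$-edges would separate $X$); and $\bigcup_n G_n\supseteq\bigcup E$ is dense, with every point lying on some edge of $E$ belonging to $G_n$ for all large $n$.

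Now fix a finite edge-set $F$ of $X$ and let $C^1,\dots,C^r$ be the components of $X\sm\bigcup F$: finitely many and each closed in $X$, by \Lr{finE}. Assigning to a component of $G_n\sm\bigcup F$ the component of $X\sm\bigcup F$ containing it (well-defined, since components of $G_n\sm\bigcup F$ are connected and avoid $\bigcup F$) gives a map $\phi_n$ into $\{C^1,\dots,C^r\}$, and \eqref{comps} is exactly the assertion that $\phi_n$ is a bijection for almost all $n$. For surjectivity: each $C^j$ contains an endpoint of an edge of $F$ (\Lr{finE}); if $C^j$ is nondegenerate then it is a subcontinuum on which $X\sm\bigcup E$ is totally disconnected, so it contains a point lying on some $e_i\in E$, and since $\overline{e_i}\subseteq G_n$ for $n\ge i$ we get $C^j\cap G_n\neq\emptyset$, hence a component of $G_n\sm\bigcup F$ mapping onto $C^j$, for all large $n$; if $C^j=\{p\}$ is a single point then $p$ is isolated in $X\sm\bigcup F$, so near $p$ the space $X$ is a finite union of edge-intervals of $F$ meeting only at $p$, and therefore for large $n$ the small-diameter component of $X\sm\bigcup S_n$ through $p$ is a subtree of this ``star'' centred at $p$, forcing the connecting tree we placed there (and hence $G_n$) through $p$.

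The heart of the proof is injectivity of $\phi_n$ for large $n$. Suppose $u,v\in G_n$ lie in a common component $C$ of $X\sm\bigcup F$; we must show that they lie in a common component of $G_n\sm\bigcup F$ once $n$ is large. Pick an \arc{u}{v} $A\subseteq C$ (\Lr{cpc}); it is compact and contained in $X\sm\bigcup F$, and it meets $\overline{\bigcup F}$ only in the finitely many endpoints of edges of $F$ lying on $A$. Away from those finitely many points $A$ has positive distance from $\overline{\bigcup F}$, so for $n$ large enough (depending on $F$ and $A$) every component $D$ of $X\sm\bigcup S_n$ that meets $A$ away from them, and every edge of $S_n$ whose closure meets $A$ there, is disjoint from $\bigcup F$; following $A$ through the resulting finite chain of \arcon\ components $D$ and edge-closures $\overline e$ --- all of which are present in $G_n$ --- produces a walk in $G_n$ from $u$ to $v$ staying out of $\bigcup F$, so $u$ and $v$ lie in the same component of $G_n\sm\bigcup F$. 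The endpoints of $F$-edges that lie on $A$ are handled separately: such a point $p$ can only be an endpoint of $A$, i.e.\ $p\in\{u,v\}$ (if $p$ were interior to $A$ it would be surrounded on $A$ by points of $X\sm\bigcup F$, impossible when the only point of $X\sm\bigcup F$ near $p$ is $p$ itself), and one argues that the small component of $X\sm\bigcup S_n$ through $p$, together with the tree placed in it, links $p$ inside $G_n\sm\bigcup F$ to the first ``clean'' piece of $A$.

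The main obstacle is precisely this injectivity step: $F$ and the sets $S_n$ are a priori unrelated edge-sets, and one must quantify ``$n$ large enough relative to $F$'' so that no small component of $X\sm\bigcup S_n$ used in $G_n$ straddles two distinct components of $X\sm\bigcup F$ while being wrongly joined up by $G_n$, the delicate case being the behaviour of the connecting arcs at (and near) the finitely many frontier points of $\bigcup F$ they encounter. The secondary technical point is to arrange the sequence $(S_n)$ to be simultaneously nested, $1/n$-fine, and exhausting $E$, while keeping the closures of its edges a finite graph.
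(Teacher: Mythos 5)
Your overall strategy---a countable \des, finitely many connecting arcs inside the complementary pieces, then a matching of components---is in the spirit of the paper's proof, but your verification of \eqref{comps} is organised quite differently, and the two places where it differs are precisely where it has gaps. The main one is a circular quantification in the injectivity step. You take $u,v\in G_n$ in a common component $C$ of $X\sm\bigcup F$, choose an arc $A\subseteq C$ joining them, and then let ``$n$ be large enough depending on $F$ \emph{and} $A$''. But $u,v$ range over $G_n$, so $A$ depends on $n$: what \eqref{comps} needs is one threshold $n_0(F)$ such that for all $n\ge n_0$ the whole set $G_n\cap C$ is connected, uniformly over all pairs of its points, including points on connecting arcs chosen only at stage $n$. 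The paper gets such a uniform statement by fixing, once $F$ is given (and after reducing to $F\subseteq E$), an index $n$ with $F\subseteq E_n$ and invoking the construction invariant that $C'\cap G_i$ is nonempty and connected for every component $C'$ of $X\sm\bigcup E_n$ and \emph{every} $i\ge n$; these pieces are then glued along the edges of $E_n$ via the auxiliary graph $H$. Your $1/n$-fine sets $S_n$ give metric smallness but no compatibility between the partition by $S_n$ and the partition by $F$: the trees you place inside components $D$ of $X\sm\bigcup S_n$ that straddle $\bigcup F$ (those containing endpoints of $F$-edges) can dip into $C$ through such an endpoint and leave again, creating fragments of $G_n\cap C$ that your chain-following argument never connects to the rest, and nothing in the argument rules this out for infinitely many $n$.

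Secondly, two concrete claims you rely on at the boundary points are not correct as stated. An endpoint of an $F$-edge lying on $A$ need not be an endpoint of $A$: in a star with three legs, with $F$ the interior of one leg, the arc joining the tips of the other two legs has the branch point (an endpoint of the $F$-edge) in its interior; your parenthetical justification only covers the case where that point is isolated in $X\sm\bigcup F$. And in the surjectivity step for a singleton component $\{p\}$, the tree inside the small component $D\ni p$ is \emph{not} forced through $p$: if $p$ has only one incident $F$-edge and all endpoints of $S_n$-edges in $D$ lie on that single leg, the tree can avoid $p$, and $p$ need not be an endpoint of any $S_n$-edge. This is not hypothetical: take $X=[0,1]$, $p=0$, $F=\{(0,1/2)\}$, and choose the witnessing edge-sets among the complementary intervals of a Cantor set; then $0\notin G_n$ for every $n$ and \eqref{comps} fails for that (legitimate) run of your construction. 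So your construction needs an extra device---for instance forcing such exceptional points to become vertices, or constraining the choice of the fine edge-sets---before the degenerate components and the straddling components can be handled. (For comparison, the paper's argument is fully explicit for $F\subseteq E$, where the components of $X\sm\bigcup E_n$ refine those of $X\sm\bigcup F$ and singleton components automatically contain endvertices of $E_n$; it is only terse about the reduction for general $F$, which is exactly the regime your route leans on.)
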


This will play an important role in the rest of the paper. We will later also consider two possible ways of metrizing these graphs $G_n$, with various applications.

The sequence of finite metric graphs \seq{G} we will construct for the proof of \Tr{txseq} will have the additional property that each $G_i$ is contained in $G_{i+1}$ when seen as a topological space. In graph-theoretic terminology,  $G_i$ is a topological minor of $G_{i+1}$.

\begin{proof}[Proof of \Tr{txseq}]
Let $E$ be a \des\ of $X$, and recall that $E$ is countable (\Lr{count}), so let $E= \{e_1, e_2, \ldots\}$. Let $E_n:= \{e_1, \ldots, e_n\}$. 

To begin with, we construct $G_1$ as follows. If $X\sm e_1$ is disconnected ---in which case it has exactly two components, each containing an endvertex of $e_1$ by \Lr{finE}--- we let $G_1$ be the graph whose only edge is $e_1$ and whose only vertices are its endvertices $e_1^0,e_1^1$. If $X\sm e_1$ is connected, then we let $G_1$ have one more $e_1^0$-$e_1^1$~edge $f$ in addition to $e_1$: we let $f$ be any $e_1^0$-$e_1^1$~arc in $X\sm e_1$, which exists by \Lr{cpc}.

Then, for $i=2,3,\ldots$, we obtain $G_i$ from $G_{i-1}$ as follows. If $e_i$ is not a subarc of any edge of $G_{i-1}$, then it is disjoint to $G_{i-1}$ by construction; in this case, we add $e_i$ as an edge of $G_i$ and its endvertices, if not already present as vertices of $G_{i-1}$, as vertices of $G_i$. If $e_i$ is a subarc of some edge $f$ of $G_{i-1}$, then we subdivide $f$ into three or two edges by declaring the endvertices of $e_i$ to be vertices of $G_i$ (note that declaring a point to be a vertex has no effect on our graphs if we view them as topological spaces contained in $X$; it is only relevant when considering a graph as a discrete structure). Then, we add some further edges to $G_i$ if needed to make sure that the components of $X\sm E_i$ correspond one-to-one to the components of $G_i\sm E_i$: we go through all pairs of endvertices $y,z$ of edges in $E_i$ recursively, and if $y,z$ lie in the same component $C$ of $X\sm E_i$ but not in the same component of $G_i\sm E_i$, we choose a \arc{y}{z}\ $P$ in $C$, which exists by \Lr{cpc}. If $P$ only meets $G_i$ at $y,z$, then we add it to $G_i$ as a new edge, with endvertices $y,z$. If $P$ does meet $G_i\sm \{y,z\}$, then let $y',z'$ be the first and last point of $P\sm \{y,z\}$ in $G_i$ (which exist by elementary topological arguments), and add the inital and final subarcs $yPy'$ and $z'Pz$ to $G_i$ as new edges, with their endpoints as incident vertices. 
Note that these new edges do not meet any other edges of $G_i$, and any edge in $E \sm E_i$ is either disjoint to $G_i$ or contained in an edge of $G_i$.

This construction ensures that \fe\ $i$ and every component $C$ of $X\sm E_i$, the subspace $C \cap G_i$ is non-empty and connected. Since $G_j \supseteq G_i$ for $j>i$, we also obtain that $C \cap G_j$ is connected for $j>i$. Even more,  
we claim that the sequence $(G_i)_{i\in\N}$ has the desired property that \fe\ finite edge-set $F$, and every component $C$ of $X\sm F$, \ti\ a unique component of $G_i\sm F$ meeting $C$ for almost all $i$.

To see this, assume first that $F \subset E$. Now if we choose $\nin$ \leth\ $E_n$ contains $F$, then by the above remark, for every component $C'$ of $X\sm E_n$, the subspace $C' \cap G_i$ is non-empty and connected for $i\geq n$. Let $\cc$ be the (finite) set of components of $X\sm E_n$ contained in $C$ (since $F \subset E$, each component of $X\sm E_n$ is contained in a component of $X\sm F$). Construct an auxiliary graph $H$ with vertex set $\cc$ having an edge joining $C'$ to $C''$ whenever there is an edge in $E_n$ with endpoints in $C'$ and $C''$. Then $H$ is a connected graph because $C$ is a connected space. But as $G_i$ contains $E_n$, and $C' \cap G_i$ is connected \fe\ $C' \in \cc$, this implies that $C \cap G_i$ is connected as desired.

If $F$ is not a subset $E$, then each element of $F$ has an interval contained in some element of $E$ since $E$ disconnects $X$, and it is easy to see that the above arguments still apply. This completes our proof.
\end{proof}


Let $G\subseteq X:= \bigcup_n G_n$. Since each $G_i$ is contained in $G_{i+1}$ as a topological subspace, $G$ coincides with the set of points of $X$ that appear in almost every $G_i$. 

We will now consider two metrizations of the $G_n$, leading to corresponding metrics of $G$. The first one is obtained by assigning to each edge $e$ of $G_n$ a length $\ell_X(e)$ equal to the length of $e$ as an arc in $X$ (recall that $G_n\subseteq X$). Let $d^\ell_n$ denote the induced metric on the metric graph $(G_n, \ell_X)$.
Since $G_i$ is contained in $G_{i+1}$, it follows that \fe\ $y,z\in G$, their distance $d^\ell_n(y,z)$ is monotone decreasing with $n$. This allows us to define the metric $d^\ell$ on $G$ by $d^\ell(y,z):= \lim_i d^\ell_i(y,z)$. We let $\widehat{G^\ell}$ denote the completion of $(G,d^\ell)$. We will prove below that $\widehat{G^\ell}$ is homeomorphic to $X$ when \Hm(G) is finite (\Tr{Xhg}).

To obtain the second metrization, we first fix an assignment of lengths $\ell'_f: E \to \R_{>0}$ to the \des\ $E$ used in the construction of $(G_n)$ such that $\sum_{e\in E} \ell'_f(e)=L< \infty$, and then we assign to each edge $e$ of $G_n$ a length $\ell_f(e)$ equal to the sum of the lengths of the elements of $E$ contained in $e$ (thus if $e$ happens to be an element of $E$, then we have $\ell_f(e)= \ell'_f(e)$). Let $d^f_n$ denote the induced metric on the metric graph $(G_n, \ell_f)$. Again, $d^f_n(y,z)$ is monotone decreasing \fe\ $y,z\in G$, and this induces a metric $d^f(y,z):= \lim_i d^f_i(y,z)$ on $G$. The corresponding completion will be denoted by $\widehat{G^f}$. We will show that $\widehat{G^f}$ is always homeomorphic to $X$, and that $\widehat{G^f}$ is a \gms\ (\Tr{gms}).

\begin{theorem}\label{Xhg}
Let $(X,d)$ be a  \gl\ continuum with $\Hm(X)<\infty$. Then 
$X$ is homeomorphic to $\widehat{G^\ell}$.
\end{theorem}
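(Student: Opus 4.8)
The idea is to realise the homeomorphism as the natural map $\phi\colon\widehat{G^\ell}\to X$ extending the inclusion $G\hookrightarrow X$, and then to check that it is a continuous bijection from a compact space onto a Hausdorff space; the conclusion then follows from \cite{armstrong}. Throughout I would use the sequence $(G_n)$, the \des\ $E=\{e_1,e_2,\dots\}$ and the sets $E_n=\{e_1,\dots,e_n\}$ from the construction in the proof of \Tr{txseq}, together with property \eqref{comps}. The basic observation is that for an arc $Q$ of $X$ contained in $G_n$, the $d^\ell_n$-length of $Q$ equals its length $\ell_X(Q)=\Hm(Q)$ in $(X,d)$, since $(G_n,\ell_X)$ assigns to each edge exactly its $d$-arc-length; as the length of an arc bounds below the $d$-distance of its endpoints, this gives $d(y,z)\le d^\ell_n(y,z)$ for $y,z\in G_n$, hence $d\le d^\ell$ on $G$. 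So the inclusion $(G,d^\ell)\hookrightarrow(X,d)$ is $1$-Lipschitz and, $X$ being complete, extends to a continuous map $\phi\colon\widehat{G^\ell}\to X$ carrying the class of a $d^\ell$-Cauchy sequence $(y_n)$ to $\lim_n y_n$.

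\emph{Compactness of $\widehat{G^\ell}$.} Because the edges of $G_n$ have pairwise disjoint interiors and $G_n\subseteq X$, we have $\ell_X(G_n)=\Hm(G_n)\le\Hm(X)<\infty$; since the $G_n$ increase, $\Hm(G_n)$ increases to $\Hm(G)\le\Hm(X)$, so $\Hm(G\sm G_n)=\Hm(G)-\Hm(G_n)\to 0$. Given $\eps>0$ fix $n$ with $\Hm(G\sm G_n)<\eps/2$. If $y\in G_m$ with $m\ge n$, an arc in $G_m$ from $y$ to the connected subgraph $G_n$ meeting $G_n$ only in its endpoint lies, apart from that point, in $G_m\sm G_n$, so has $d^\ell_m$-length $\le\Hm(G_m\sm G_n)\le\Hm(G\sm G_n)<\eps/2$; thus every point of $G$ is within $\eps/2$ of $G_n$ in $d^\ell$. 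As the finite metric graph $(G_n,d^\ell_n)$ is compact and $d^\ell\le d^\ell_n$, it carries a finite $\eps/2$-net in $d^\ell$, which is a finite $\eps$-net of $(G,d^\ell)$. Hence $(G,d^\ell)$ is totally bounded and, being complete, $\widehat{G^\ell}$ is compact.

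\emph{A refinement lemma, surjectivity, and injectivity.} I would first prove: for every $\eps>0$ there is $k$ with $\Hm(D)<\eps$ for every component $D$ of $X\sm\bigcup E_k$. Indeed $M_k:=\max\{\Hm(D):D\text{ a component of }X\sm\bigcup E_k\}$ is finite and non-increasing in $k$; if $M_k\ge c$ for all $k$ for some $c>0$, then by K\"onig's Infinity Lemma \cite{InfLemma} applied to the tree of components of $X\sm\bigcup E_k$ with $\Hm\ge c$ ordered by reverse inclusion there is a nested sequence $D_1\supseteq D_2\supseteq\cdots$ of such components, and $\bigcap_k\overline{D_k}$ is a nested intersection of continua contained in the totally disconnected set $X\sm\bigcup E$ (each $\overline{D_k}$ avoids $e_1,\dots,e_k$ except possibly at endpoints of these edges, which lie on no edge of $E$), hence a single point, whence $\Hm(\overline{D_k})\to0$ by continuity from above of the finite measure $\Hm$ --- contradicting $M_k\ge c$. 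Since $\Hm\ge\mathrm{diam}$ on connected sets, this also shows that for $x\in X$ on no edge of $E$, the nested components $C_k\ni x$ of $X\sm\bigcup E_k$ satisfy $\Hm(C_k)\to0$ and $\mathrm{diam}(C_k)\to0$. \emph{Surjectivity:} points on, or endpoints of, edges of $E$ lie in some $G_n\subseteq G$; for any other $x$, use \eqref{comps} to pick $g_k\in C_k\cap G_{m(k)}$ with $m(k)$ increasing and large, so $g_k\to x$ in $d$; for $k<k'$, $g_k$ and $g_{k'}$ both lie in $C_k$ and, by \eqref{comps}, in the unique component of $G_m\sm\bigcup E_k$ meeting $C_k$ (with $m=\max(m(k),m(k'))$), which is $\subseteq C_k$, hence joined within it by an arc of $\Hm$-length $\le\Hm(C_k)$, so $d^\ell(g_k,g_{k'})\le\Hm(C_k)\to0$; thus $(g_k)$ is $d^\ell$-Cauchy and $\phi(\lim g_k)=x$. \emph{Injectivity:} let $(y_n),(z_n)$ be $d^\ell$-Cauchy in $G$ with common $d$-limit $x$ and suppose $d^\ell(y_n,z_n)\ge6c$ for all $n$. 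If $x$ lies on an edge $e_j\in E$ then $y_n,z_n\in e_j$ for large $n$ and travelling along $e_j$ (an edge of $G_m$ for $m\ge j$) gives $d^\ell_m(y_n,z_n)\to0$, a contradiction; otherwise (the case of $x$ an endpoint of an edge of $E$ is similar, using the neighbourhood basis of \Cr{basis}) choose $k$ with all components of $X\sm\bigcup E_k$ of $\Hm$-measure $<c$ and let $C\ni x$ be one of them, so $y_n,z_n\in C$ for large $n$ as $x\notin\overline{\bigcup E_k}$; then, exactly as in the surjectivity argument, $y_n,z_n$ lie for suitable large $m$ in the unique component of $G_m\sm\bigcup E_k$ meeting $C$, which is $\subseteq C$, forcing $d^\ell_m(y_n,z_n)\le\Hm(C)<c$ and hence $d^\ell(y_n,z_n)\le c$ --- a contradiction. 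Therefore $\phi$ is a continuous bijection from a compact space onto a Hausdorff space, hence a homeomorphism \cite{armstrong}.

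\emph{Main obstacle.} The decisive step is injectivity, and it is exactly there that $\finhm$ is used twice: finiteness of $\Hm(X)$ is what lets the components of $X\sm\bigcup E_k$ be made light (not merely of small diameter), and it is what makes the connecting arc inside such a component short (its length being bounded by the component's $\Hm$). Without it the theorem fails already at the compactness step --- e.g.\ for a ``Hawaiian earring'' whose circles do not shrink in length, where $\widehat{G^\ell}$ is not even compact. The on-edge and endpoint subcases for $x$ are routine but do need the basis of connected open sets with finite frontier from \Cr{basis}, and one should also verify the harmless identification, used repeatedly, of the $d^\ell_n$-length of a subarc of $G_n$ with its $\Hm$-value.
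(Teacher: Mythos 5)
Your proposal is correct, and its heart is the same estimate the paper uses: inside a small connected piece of $X$ whose $\Hm$-measure is small (which is where $\finhm$ enters), property \eqref{comps} makes $G_n$ connected, so two nearby points of $G$ are joined in $G_n$ by an arc of length at most the $\Hm$ of the piece, bounding $d^\ell$ by $d$-closeness. The packaging, however, is genuinely different. The paper phrases everything as an equivalence of Cauchy sequences for $d$ and $d^\ell$ on the dense set $G$ (the easy direction from $d\le d^\ell$, the hard direction from \Cr{basis} --- hence \Tr{glugl} --- together with the fact that small-diameter neighbourhoods have small $\Hm$ when $\Hm(X)<\infty$), and this immediately identifies $\widehat{G^\ell}$ with $X$. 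You instead build the canonical $1$-Lipschitz map $\widehat{G^\ell}\to X$, prove compactness of $\widehat{G^\ell}$ via $\Hm(G\sm G_n)\to 0$, check surjectivity and injectivity, and invoke the compact-to-Hausdorff lemma; and in place of \Cr{basis} you prove a refinement lemma (all components of $X\sm\bigcup E_k$ eventually have small $\Hm$) by K\"onig's Infinity Lemma plus continuity from above of the finite measure $\Hm$, using that a nested intersection of the closures is a continuum inside the totally disconnected set $X\sm\bigcup E$. What each buys: the paper's route is shorter and leans on machinery it has already set up (\Tr{glugl}, \Cr{basis}); yours is more self-contained on that point, makes the compactness and total boundedness of $(G,d^\ell)$ explicit (a fact of independent use), and your K\"onig argument is a clean substitute for the basis lemma. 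The only soft spot is the endpoint subcase of injectivity, which you leave as ``similar via \Cr{basis}'': if you go that way you must also note that small-diameter basis neighbourhoods have small $\Hm$ (your own continuity-from-above argument gives this), or, more in the spirit of your proof, observe that points converging to an endvertex $x$ eventually lie either in the component $C\ni x$ of $X\sm\bigcup E_k$ or on one of the finitely many edges of $E_k$ whose closure contains $x$, and connect through $x$ along such an edge plus an arc in $C\cap G_m$; with that spelled out the argument is complete.
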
 
\begin{proof} 

In order to be able to extend the identity map from $G$ to $X$ into a homeomorphism $h: \widehat{G^\ell} \to X$,
it suffices to prove that a sequence $\seq{p}$ of points of \g is Cauchy \wrt\ $d^\ell$ \iff\ it is Cauchy \wrt\ $d$. 

The forward direction follows from the fact that $d^\ell\geq d$ because the length of any arc in a metric space is at least the distance of its endpoints. 

For the backward direction, let $\seq{p}$ be Cauchy \wrt\ $d$, and let $p$ be its limit in $X$, which exists since $X$ is compact, hence complete. Since $\Hm(X)<\infty$, any sequence $\seq{O}$ of open neighbourhoods of $p$ with $\lim diam(O_n)=0$ satisfies $\lim \Hm(O_n)= 0$, because $\Hm(X) = \lim \Hm(X \sm O_n)$ by the definition of $\Hm$ and $\Hm(X) \leq \Hm(X \sm O_n) + \Hm(O_n)$.

Combining this with \Cr{basis}, we can find, \fe\ $\eps>0$, a connected open neighbourhood $O$ of $p$ whose frontier is contained in a finite edge-set and $\Hm(O)<\eps$. We claim that for any $p_i,p_j \in O$, we have $d^\ell(p_i,p_j)\leq \Hm(O_n)<\eps$. Since almost all $p_i$ lie in this neighbourhood $O$ of $p$, we can conclude that $(p_i)$ is then Cauchy \wrt\ $d^\ell$.  

To prove the above claim, note that $O_n\cap G_n$ is connected for $n$ large enough by \eqref{comps} and \Prr{dense}. Thus $G_n$ contains an \arc{p_i}{p_j} $A$ contained in $O$. By the definition of $d^\ell$, we have $d^\ell(p_i,p_j)\leq \ell(A)$, and since $A\subseteq O$ we have $\ell(A) \leq  \Hm(O)$, which proves our claim. 


\end{proof}

\note{\seq{G}\ convergence in the Gromov-Hausdorff sense to the completion of $\bigcup G_n$. To show the Gromov-Hausdorff convergence of \seq{G}, by \cite[Corollary 7.3.28]{Burago}, it suffices to show that \fe\ $\eps$ \ti\ $\nin$ \st\ \fe\ $i>n$ \ti\ an $\eps$-isometry from $G_i$ to $\widehat{\bigcup G_n}$.
}

The completion \wrt\ our other metric on \g is homeomorhpic to $X$ in greater generality, and this implies \Tr{gmsI} from the Introduction.

\begin{theorem}\label{gms}
Let $(X,d)$ be a  \gl\ continuum. Then 
$\widehat{G^f}$ is a \gms\ homeomorphic to $X$. 
\end{theorem}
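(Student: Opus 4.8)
My plan is to follow the two-step strategy used for \Tr{Xhg}: first show that $\widehat{G^f}$ is homeomorphic to $X$, then that $(\widehat{G^f},d^f)$ is geodesic. For the homeomorphism, exactly as in \Tr{Xhg}, it suffices to prove that a sequence \seq{p} of points of $G$ is Cauchy \wrt\ $d^f$ \iff\ it is Cauchy \wrt\ $d$. Indeed, since $(G,d)$ is totally bounded (as a subspace of the compact space $X$), this sequential statement already forces $\mathrm{id}\colon (G,d)\to(G,d^f)$ to be uniformly continuous, hence to extend to a continuous map $X\to\widehat{G^f}$; this map is surjective by density of $G$ (\Prr{dense}) and injective by the reverse direction of the Cauchy equivalence, and it is therefore a homeomorphism because $X$ is compact and $\widehat{G^f}$ is Hausdorff.

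For the direction ``$d$-Cauchy $\Rightarrow$ $d^f$-Cauchy'', let \seq{p} be $d$-Cauchy with limit $p\in X$, and fix $\delta>0$. I would first pick $N$ with $\sum_{i>N}\ell'_f(e_i)<\delta$. Since $\ell'_f$ is a finite measure on $E$ and the sets $\{e\in E:e\subseteq O\}$ shrink to $\emptyset$ as $O$ ranges over a neighbourhood basis at $p$, this together with \Cr{basis} gives a connected open neighbourhood $O$ of $p$ whose frontier is a finite set of points, each on an edge, with $\sum_{e\in E,\ e\subseteq O}\ell'_f(e)<\delta$. Almost all $p_n$ lie in $O$, and applying \eqref{comps} to the finite edge-set whose members contain the frontier points of $O$, together with \Prr{dense}, shows that $O\cap G_n$ is connected for all large $n$. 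Hence any two such $p_i,p_j$ are joined inside $O$ by an arc $A$ in $G_n$, and a routine estimate in the metric graph $(G_n,\ell_f)$ --- using also that the $\ell_f$-length of the edge of $G_n$ containing a fixed point of $G$ tends to $0$ --- gives $d^f(p_i,p_j)\le d^f_n(p_i,p_j)\le\ell_f(A)<2\delta$ for $n$ large. (The case $p\in\bigcup E$ is easier: a small interval of the relevant edge around $p$ has small $\ell_f$-length.)

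For ``$d^f$-Cauchy $\Rightarrow$ $d$-Cauchy'' I would argue by contradiction. If \seq{p} is $d^f$-Cauchy but not $d$-Cauchy, then by compactness of $X$ there are subsequences converging in $d$ to distinct points $x,y$. Exactly as in the proof of \Tr{glugl}, \Lr{finsep} yields an $m$ with $E_m=\{e_1,\dots,e_m\}$ separating $x$ from $y$, say $W\cup\bigcup E_m\cup Y=X$ with disjoint open sets $W\ni x$, $Y\ni y$. For $n\ge m$, the construction of the $G_n$ makes the components of $G_n\sm\bigcup E_n$ correspond to those of $X\sm\bigcup E_n$, which refine $\{W,Y\}$; the crucial point is then that any arc in $G_n$ from a point of $W\cap G_n$ to a point of $Y\cap G_n$ must fully traverse $\overline{e_i}$ for some $i\le m$, hence has $\ell_f$-length at least $c:=\min_{i\le m}\ell'_f(e_i)>0$. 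Consequently $d^f(p_i,p_j)=\lim_n d^f_n(p_i,p_j)\ge c$ whenever $p_i\in W$ and $p_j\in Y$; choosing $i,j$ large from the two subsequences (so $p_i\in W$, $p_j\in Y$, as $W,Y$ are open) contradicts the $d^f$-Cauchy property. This completes the homeomorphism; in particular $\widehat{G^f}$ is compact. I expect this topological claim --- that a crossing arc in $G_n$ fully traverses some $\overline{e_i}$ --- to be the main obstacle: one must treat arcs that merely touch, or begin or end inside, the edges $\overline{e_i}$, and account for their possible subdivision during the construction, using \Lr{finE}, \Cr{basis} and \eqref{comps}.

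It remains to show that $(\widehat{G^f},d^f)$ is a \gms. It is complete as a completion and compact by the above, hence proper, so by the Hopf--Rinow--Cohn-Vossen theorem \cite{Burago} it suffices to prove it is a length space. For $y,z\in G$ and $\eps>0$, pick $n$ with $y,z\in G_n$ and $d^f_n(y,z)<d^f(y,z)+\eps$ (possible since $d^f_n(y,z)$ decreases to $d^f(y,z)$); a $d^f_n$-geodesic $P_n$ in the finite metric graph $(G_n,\ell_f)$ from $y$ to $z$ is then a path in $\widehat{G^f}$ whose $d^f$-length is at most its $d^f_n$-length $=d^f_n(y,z)<d^f(y,z)+\eps$, since $d^f\le d^f_n$ on $G\supseteq P_n$. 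Using density of $G$ in $\widehat{G^f}$ and concatenating such paths along a rapidly converging sequence upgrades this to: any $p,q\in\widehat{G^f}$ are joined by a path of $d^f$-length $<d^f(p,q)+\eps$, so $\widehat{G^f}$ is a length space, hence geodesic. (The length-space property also follows from the Gromov--Hausdorff convergence $(G_n,\ell_f)\to\widehat{G^f}$; and as $\ell_f(G_n)\le L:=\sum_{e\in E}\ell'_f(e)$ for all $n$, lower semicontinuity of Hausdorff measure gives $\Hm(\widehat{G^f})\le L$, which with the homeomorphism yields \Tr{gmsI}.)
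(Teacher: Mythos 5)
Your overall strategy is the paper's: prove that a sequence in $G$ is Cauchy \wrt\ $d^f$ \iff\ it is Cauchy \wrt\ $d$, deduce the homeomorphism, and obtain geodesicity from compactness together with the length-space property (the paper simply observes that $\widehat{G^f}$ is a length space by definition and compact; your Hopf--Rinow argument is the same point spelled out). The forward direction also matches the paper, which likewise chooses a connected open neighbourhood $O$ of the limit point with frontier in a finite edge-set and $\sum_{e\in E\cap O}\ell_f(e)$ small; one small inaccuracy there: your auxiliary claim that ``the $\ell_f$-length of the edge of $G_n$ containing a fixed point of $G$ tends to $0$'' is false when that point is interior to an element of $E$ (such an edge is never subdivided and keeps its length), though the terminal segments you need to control are still short because the distance within that edge from $p_i$ to the relevant endvertex tends to $0$.

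The genuine gap is in the backward direction, exactly at the step you flag and leave unresolved. Your lower bound rests on the claim that every arc in $G_n$ from a point of $W$ to a point of $Y$ fully traverses $\overline{e_i}$ for some $i\le m$, hence has $\ell_f$-length at least $c=\min_{i\le m}\ell'_f(e_i)$. This is correct when the endpoints of the arc avoid $\bigcup E_m$ (an injective path that enters an open edge and leaves it must leave through the other endvertex, so every non-terminal excursion into an $e_i$ is a full traversal), but it fails when $p_i$ or $p_j$ lies in the interior of some $e_k$ with $k\le m$: the arc may then exit $e_k$ through a nearby endvertex after an arbitrarily short initial segment, and no full traversal need occur. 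Choosing $p_i\in W$, $p_j\in Y$ does not exclude this, since $W,Y$ need not be disjoint from $\bigcup E_m$; and since the subsequences converge only in $d$, their terms may well accumulate inside a separating edge. The paper deals with precisely this situation by the opening reduction ``we may assume that no edge of $X$ contains infinitely many elements of \seq{p}'', after which only finitely many terms lie on the finitely many separating edges and the crossing argument applies to all large indices; it also phrases the bound via the intervals $F'=F\cap(X\sm O)$ lying outside a neighbourhood $O$ of the accumulation point rather than via whole edges. Your proposal neither makes this reduction nor offers a substitute (e.g.\ a case analysis for terms accumulating at an interior point or an endvertex of a separating edge), so the contradiction ``$d^f(p_i,p_j)\ge c$'' is not established as written. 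The remainder, including the length-space/geodesic part, is sound and in line with the paper.
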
 
\begin{proof} 
We can prove that $\widehat{G^f}$ is canonically homeomorhpic to $X$  similarly to the proof of \Tr{Xhg}: the fact that a sequence $\seq{p}$ of points of \g is Cauchy \wrt\ $d^f$ if it is Cauchy \wrt\ $d$ can be proved with similar arguments, except that rather than using any condition on $\Hm$ we observe that we can find a connected open neighbourhood $O$ of $p$ whose frontier is contained in a finite edge-set such that $\sum_{e\in E \cap O} \ell_f(e)$ is arbitrarily small. 

For the converse statement, let  $\seq{p}$ be a Cauchy sequence in $\widehat{G^f}$. We want to prove that it is also Cauchy in $X$. Easily, we may assume \obda\ that no edge of $X$ contains infinitely many elements of $\seq{p}$. Let $O$ be a connected open neighbourhood  of $p:= \lim p_n$ in $X$ whose frontier is contained in a finite edge-set $F$. Let $F':= F\cap (X \sm O)$, and notice that $F'$ is an edge-set comprising an interval of each element of $F$. Then by the definition of $G_n$ and $d_n^f$, if $p_i$ lies outside $O \cup F$ then we have $d_n^f(p_i,p)\geq \min_{e\in F'} \ell_f(e)$ because every path in $G_n$ joining a point outside $O \cup F$ to a point in $O$ has to go through some edge in $F'$. Since the $p_n$ converge to $p$ \wrt\ $d^f$, and since we are assuming that no edge contains infinitely many of them, it follows that almost all $p_n$ lie in $O$. Choosing a sequence of such $O$ converging to $p$ now implies that $\seq{p}$ is Cauchy in $X$ as desired.
\medskip

The fact that $\widehat{G^f}$ is a \gms\ follows from the fact that $\widehat{G^f}$ is compact (since it is homeomorhpic to $X$) and it is a length space by definition, i.e.\ the distance between any two points equals the infimum of the lengths of the paths joining them.

\comment{
	We define \seq{G}\ and \hg\ as above, whereby if we are given a \des\ $E$ of finite length we use that edge-set in the construction of \seq{G}. 
It is easy to see that \hg\ is a length space by the definition of its metric. Since it is homeomorphic to the compact space $X$ by \Tr{Xhg}, \hg\ is a \gms. 

To see that \hg\ has finite length...

	The second sentence is clearly true 
	by the construction of \hg. 
}
\end{proof}

In \Tr{gms}, if $d$ is a metric of $X$ \wrt\ which $X$ has a \des\ $E$ of finite length, then \ti\ a compatible geodesic metric $d'$ of $X$ such that each edge of $E$ has the same length in $d$ and $d'$.

Graph sequences as in \Tr{txseq} have the following additional property

\begin{proposition} \label{dense}
Let $X$ be a \gl\ space and \seq{G} a sequence of graphs as in  \Tr{txseq}. Then $\bigcup G_n$ contains every edge of $X$ and is dense in $X$.
\end{proposition}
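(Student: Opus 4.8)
The plan is to work with the concrete increasing sequence $\seq{G}$ constructed in the proof of \Tr{txseq} (so that, as in the discussion preceding this proposition, $G:=\bigcup_n G_n$ is the set of points lying in almost all $G_i$), and to exploit two features of that construction beyond property \eqref{comps}: that $\overline{e_i}\subseteq G_i$ for every $i$, whence $\bigcup E\subseteq G$; and that for every $n$ and every component $C$ of $X\sm\bigcup E_n$ the set $C\cap G_n$ is non-empty and connected. The density statement follows quickly: $X\sm\bigcup E$ is closed (as $\bigcup E$ is open) and totally disconnected by the definition of a \des, so if $\bigcup E$ were not dense its interior would, by local connectedness of $X$ (\Tr{glocon}), contain a non-empty connected open set, necessarily a single point, which would then be isolated in $X$ --- impossible for a continuum with more than one point, while the one-point case is trivial. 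Hence $\bigcup E$, and a fortiori $G$, is dense in $X$.

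For the claim that $G$ contains every edge, fix an edge $e$ of $X$ with endvertices $e^0,e^1$; since the points of $e$ are exactly its interior points, it suffices to show $x\in G$ for each such $x$. If $x$ lies on $\overline{e_l}$ for some $e_l\in E$ we are done by the first feature above, so assume $x$ lies on no $\overline{e_l}$; in particular $x\notin\bigcup E$ and $x$ is not an endvertex of any $e_l$. The key point I would establish is that, for all sufficiently large $n$, the component $C^{(n)}$ of $X\sm\bigcup E_n$ containing $x$ is a compact subarc $[a,b]$ of $e$ with $a<x<b$ whose two endpoints $a,b$ are endvertices of edges of $E_n$. Granting this, $a$ and $b$ lie in $G_n$ (each lies on some $\overline{e_l}\subseteq G_l\subseteq G_n$), while $C^{(n)}\cap G_n$ is non-empty and connected by the second feature; a connected subset of the arc $C^{(n)}$ containing its two endpoints is all of $C^{(n)}$, so $C^{(n)}\subseteq G_n$ and in particular $x\in G$. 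Letting $x$ range over $e$ gives $e\subseteq G$.

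The remaining work, and the step I expect to be the main obstacle, is verifying this structure of $C^{(n)}$. First, by \Lr{finsep} together with the refinement device used in the proof of \Tr{txseq} (passing from an arbitrary separating finite edge-set to one contained in $E$), for large $n$ the set $\bigcup E_n$ separates $x$ from $e^0$ and from $e^1$, so $C^{(n)}\subseteq e$ for such $n$. Being a connected subset of $e\cong(0,1)$, $C^{(n)}$ is an interval; a short local analysis at $e^0$ --- either a neighbourhood of $e^0$ lies in some $e_l$, so $C^{(n)}$, being disjoint from $\bigcup E_n$, cannot accumulate at $e^0$, or else $e^0\in X\sm\bigcup E_n$ and, components of $X\sm\bigcup E_n$ being closed there by \Lr{finE}, accumulation of $C^{(n)}$ at $e^0$ would force $e^0\in C^{(n)}\subseteq e$, absurd --- and symmetrically at $e^1$ shows $C^{(n)}$ is a compact subinterval $[a,b]$ of $e$. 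Finally, the portion of $e$ immediately to the left of $a$ is a non-empty interval contained in $\bigcup E_n$ and, being connected, lies inside a single $e_l$ (here one uses that the members of $E$ are pairwise disjoint, cf.\ \eqref{union}); as $a\in\overline{e_l}\sm e_l$, the point $a$ is an endvertex of $e_l\in E_n$, and likewise $b$; and $a\ne x\ne b$ since otherwise $x$ would be an endvertex of some $e_l$, contrary to our assumption.
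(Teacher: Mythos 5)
Your argument is correct in its details for the particular nested sequence constructed in the proof of \Tr{txseq} (the two places you gloss --- that a whole interval of $e$ immediately to the left of $a$ lies in $\bigcup E_n$, and that $a,b$ themselves belong to $C^{(n)}$ --- can be supplied from \Lr{finE}: $C^{(n)}$ is clopen in the closed subspace $X\sm\bigcup E_n$, and edges are open). The genuine problem is one of scope. The proposition, as it is used later, concerns an arbitrary sequence with property \eqref{comps} --- in particular the \gseq s of \Sr{ellH}, to which \Prr{dense} is applied in the proofs of \Tr{HmeqL} and \Lr{pseudo1} --- whereas your proof leans on features of the specific construction that \eqref{comps} does not give: the nesting $G_i\subseteq G_{i+1}$, the inclusion $\overline{e_i}\subseteq G_i$, and connectedness of $C\cap G_n$ at stage exactly $n$ for every component $C$ of $X\sm\bigcup E_n$. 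None of these survives even a reindexing of the sequence, let alone an unrelated sequence satisfying \eqref{comps}; and your ``first feature'' $\bigcup E\subseteq\bigcup_n G_n$ is a special case of precisely the assertion being proved. So what you prove is genuinely weaker than what the statement (and its later applications) requires.

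The missing idea is that \eqref{comps} alone already forces every edge into $\bigcup G_n$, with no structural analysis of components needed: if some interior point $p$ of an edge $e$ were missed by $\bigcup G_n$, take the two disjoint open subarcs $f_1,f_2$ of $e$ on either side of $p$; these are edges of $X$, the singleton $\{p\}$ is a component of $X\sm(f_1\cup f_2)$, and \eqref{comps} applied to $F=\{f_1,f_2\}$ yields a component of $G_n\sm F$ meeting $\{p\}$, i.e.\ $p\in G_n$, for almost all $n$ --- a contradiction. Your density argument is fine and essentially supplies the justification behind the paper's remark that any \des\ is dense, but it should then be run with ``$\bigcup G_n$ contains every edge, hence contains $\bigcup E$'' rather than with the construction-specific inclusion $\overline{e_i}\subseteq G_i$.
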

\begin{proof} 
Let $e$ be an  edge of $X$ and suppose that $\bigcup G_n$ misses a point $p\in e$. Consider two disjoint edges $f_1,f_2$ contained in $e$, each having $p$ as an endpoint. Then letting $F:=\{ f_1,f_2\}$ contradicts \eqref{comps} since $p$ forms a component of $X\sm F$.

As any \des\ $E$ of $X$ is dense in $X$ by definition, and $\bigcup G_n$ contains $E$ as we just saw, $\bigcup G_n$ is dense in $X$.
\end{proof}

In fact we can strengthen \Prr{dense} a bit. We say that an edge $e$ of $X$ is a \defi{pending edge}, if at least one component of $X\sm e$ is a singleton (which must be an endpoint of $e$). Then for every edge $e$ that is not a pending edge, some $G_n$ contains $e$.

\section{Lengths and Hausdorff measure} \label{ellH}

Define a \defi{\gseq} of $X$ to be a sequence $\seq{G}$ of finite subgraphs of $X$ satisfying \eqref{comps}, \st\ \fe\ edge $e\in E(G_n)$ the length $\ell(e)$ of $e$ in $G_n$ coincides with the length of the corresponding arc of $X$. We established the existence of such sequences in \Tr{txseq}. In this section we show that they approximate $X$ well also in terms of the Hausdorff measure. This fact is a key tool in the proof of our decomposition theorem in the next section.

Recall that the $n$-dimensional Hausdorff measure of a metric space $X$ is defined by 
$$\ch^n(X):= \lim_{\del \to 0} \ch^n_\del,$$ where
$$\ch^n_\del:=\inf \lbrace \sum_i  diam(U_i)^n \mid \bigcup_i U_i= X, diam(U_i) < \del \rbrace. $$
We introduce a quantity $\ch^G_\del$ similar to $\ch^1_\del$, 
that will be useful in \Sr{pedec}: given a \des\ $E$ of $X$, let
$$\ch^G_\del:= \ell(E_\del) + \sum_{\stackrel{K \text{ is a component} }{\text{ of } X\sm E_\del}} diam(K),$$
where $E_\del \subset E$ is a finite edge-set chosen so that every component $K$ in the sum has diameter $diam(K)< \del$. Such a choice is possible by \Tr{glugl}.

\begin{theorem}\label{HmeqL}	
Let $X$ be a \gl\ continuum and \seq{G} a \gseq\ of $X$. Then $\lim \ell(G_n) = \Hm(X) = \lim_{\del \to 0} \ch^G_\del$.  (In particular, the latter limit exists.)
\end{theorem}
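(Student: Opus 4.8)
The plan is to establish four inequalities and let them squeeze out the two equalities: $\lim_n\ell(G_n)\le\Hm(X)$, then $\ch^G_\del\le\Hm(X)$ for every $\del$, then $\ch^1_\del(X)\le\ch^G_\del$ for every $\del$, and finally $\lim_n\ell(G_n)\ge\ch^G_\del$ for every $\del$; combined with $\ch^1_\del(X)\uparrow\Hm(X)$ as $\del\downarrow 0$ these force $\lim_{\del\to 0}\ch^G_\del=\Hm(X)$ (in particular the limit exists) and $\lim_n\ell(G_n)=\Hm(X)$. All inequalities make sense with values in $[0,\infty]$, so the case $\Hm(X)=\infty$ needs no separate treatment. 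The first inequality is immediate: since $G_n\subseteq G_{n+1}$ as subspaces, each edge of $G_n$ is a union of edges of $G_{n+1}$ and the new edges have positive length, so $\ell(G_n)$ is non-decreasing; and as $G_n$ is a finite graph whose edges are arcs of $X$ meeting only in vertices, $\ell(G_n)=\Hm(G_n)\le\Hm(X)$ using $\Hm(A)=\ell(A)$ for an arc $A$. For the second, fix $\del$ and a witnessing finite $E_\del\subseteq E$ and let $K_1,\dots,K_r$ be the components of $X\sm\bigcup E_\del$ (finitely many, by \Lr{finE}); additivity of $\Hm$ over the partition $\{\bigcup E_\del,K_1,\dots,K_r\}$ of $X$, together with $\Hm(\bigcup E_\del)=\ell(E_\del)$, gives $\Hm(X)=\ell(E_\del)+\sum_j\Hm(K_j)$, while a \arc{x}{y}\ in $K_j$ between diametral points (which exists by \Lr{cpc}) has $\Hm$-measure $\ge diam(K_j)$, so $\ch^G_\del=\ell(E_\del)+\sum_j diam(K_j)\le\Hm(X)$. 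The third inequality follows by covering each closed edge $\overline e$, $e\in E_\del$, by sets of diameter $<\del$ of total diameter less than $\ell(e)+\eps/|E_\del|$ (possible since $\ch^1_\del(\overline e)\le\Hm(\overline e)=\ell(e)$) and adjoining the sets $K_1,\dots,K_r$, each of diameter $<\del$: this is a $\del$-cover of $X$ of total diameter less than $\ch^G_\del+\eps$, and $\eps>0$ is arbitrary.

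The heart of the proof is the fourth inequality, $\lim_n\ell(G_n)\ge\ch^G_\del$, for which the key geometric fact is that, with $E_\del$ and $K_1,\dots,K_r$ as above, the set $H_j:=K_j\cap\bigcup_n G_n$ is \emph{dense} in $K_j$ for each $j$. To see this, recall that $\bigcup_n G_n$ is dense in $X$ by \Prr{dense}, and observe that the frontier of $K_j$ in $X$ is finite: since $K_j$ is open in $X\sm\bigcup E_\del$ by \Lr{finE}, we may write $K_j=W\sm\bigcup E_\del$ for some $W$ open in $X$, whence $K_j\sm\mathrm{int}_X(K_j)\subseteq K_j\cap\overline{\bigcup E_\del}$ consists only of (some of) the finitely many endvertices of edges in $E_\del$. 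Thus $\mathrm{int}_X(K_j)$ is cofinite in $K_j$, hence dense (a non-degenerate continuum has no isolated points; degenerate $K_j$ are trivial). If $H_j$ failed to be dense in $K_j$, a point $p\in(K_j\sm\overline{H_j})\cap\mathrm{int}_X(K_j)$ would have a neighbourhood $O\subseteq K_j$ with $O\cap\overline{H_j}=\emptyset$, so $O\cap\bigcup_n G_n\subseteq K_j\cap\bigcup_n G_n=H_j$ would be empty, contradicting density of $\bigcup_n G_n$. Separately, \eqref{comps} applied with $F=E_\del$ shows that for all sufficiently large $n$ the set $G_n\cap K_j$ is a non-empty connected subgraph of $G_n$ — the unique component of $G_n\sm\bigcup E_\del$ meeting $K_j$ — and it remains connected as $n$ grows.

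Now fix $\eps>0$ (we may assume $\ell(E_\del)<\infty$, as otherwise $\ell(G_n)=\infty$ for all large $n$ and there is nothing to prove). Using density of $H_j$, choose $x_j,y_j\in H_j$ with $d(x_j,y_j)>diam(K_j)-\eps$ for each $j$, and then take $n$ large enough that for every $j$ we have $x_j,y_j\in G_n$, $G_n\cap K_j$ connected, and every $e\in E_\del$ a union of edges of $G_n$. A simple \arc{x_j}{y_j}\ $P_j$ in the finite connected metric graph $G_n\cap K_j$ then satisfies $\ell(G_n\cap K_j)\ge\ell(P_j)\ge d(x_j,y_j)>diam(K_j)-\eps$, where $\ell(G_n\cap K_j)$ denotes the total length of the edges of $G_n$ lying in $K_j$, the first inequality holds because $P_j$ uses distinct edges of $G_n\cap K_j$, and the second because the length of the arc $P_j$ in $X$ is at least $d(x_j,y_j)$. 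Since no edge of $G_n$ has an interior point in $\bigcup E_\del$ (the edges of $G_n$ meet pairwise only in vertices and each $e\in E_\del$ is a union of edges of $G_n$), every edge of $G_n$ lies either in some closed edge $\overline e$, $e\in E_\del$, or in a unique $K_j$; the former contribute length exactly $\ell(E_\del)$ and the latter form disjoint families for distinct $j$, so $\ell(G_n)\ge\ell(E_\del)+\sum_{j}\ell(G_n\cap K_j)>\ch^G_\del-r\eps$. As $\ell(G_n)$ is non-decreasing and $\eps>0$ arbitrary, $\lim_n\ell(G_n)\ge\ch^G_\del$; taking the supremum over $\del$ and invoking the already-established $\lim_{\del\to 0}\ch^G_\del=\Hm(X)$ gives $\lim_n\ell(G_n)\ge\Hm(X)$, which together with the first inequality finishes the proof.

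I expect the density statement for $H_j=K_j\cap\bigcup_n G_n$ to be the main obstacle: it is precisely what converts the metric quantity $diam(K_j)$ into a lower bound on the combinatorial quantity $\ell(G_n\cap K_j)$, and its proof rests on the slightly delicate point that a component $K_j$ of $X\sm\bigcup E_\del$, although not open in $X$, has finite frontier in $X$. Everything else — monotonicity of $\ell(G_n)$, additivity of $\Hm$ and its agreement with length on arcs, the arcwise connectedness of closed connected subspaces (\Lr{cpc}), and the eventual connectedness of $G_n\cap K_j$ supplied by \eqref{comps} — is either standard or already established in the paper.
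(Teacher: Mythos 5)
Your overall architecture is the same as the paper's: the equalities are squeezed out of the chain $\liminf\ell(G_n)\geq \ch^G_\del$ (via \eqref{comps}-connectivity of $G_n\cap K_j$ plus points of the approximating graphs near a diametral pair of each component), $\ch^1_\del(X)\leq \ch^G_\del$ (via the cover made of a fine cover of $\bigcup E_\del$ together with the components $K_j$), and $\ell(G_n)=\Hm(G_n)\leq \Hm(X)$. Your extra inequality $\ch^G_\del\leq\Hm(X)$ is redundant but correct, and in places (the covering of the closed edges, the density of $K_j\cap\bigcup_n G_n$ via the finiteness of the frontier of $K_j$) you are more careful than the paper itself.

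The one genuine problem is your repeated use of nestedness: you assume $G_n\subseteq G_{n+1}$, and you deduce from it that $\ell(G_n)$ is monotone, that the fixed points $x_j,y_j\in\bigcup_n G_n$ lie in $G_n$ for all large $n$, and that every $e\in E_\del$ is eventually a union of edges of $G_n$. None of this is granted: a \gseq\ is by definition just a sequence of finite subgraphs satisfying \eqref{comps} with edge lengths equal to arc lengths; the containment $G_i\subseteq G_{i+1}$ is an \emph{additional} feature of the particular sequence built in \Tr{txseq}, and the theorem (and the subsequent independence statements, e.g.\ in \Tr{rconv}) is meant for arbitrary \gseq s. Moreover, even for the nested sequence, an edge $e\in E_\del$ taken from a \des\ other than the one used in the construction (or a pending edge) need not ever be a union of edges of $G_n$; at best long closed subarcs of it are eventually contained in $G_n$. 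The repair stays within your argument: instead of fixing points of $\bigcup_n G_n$, fix diametral points $u_j,v_j\in K_j$, use \Tr{glugl} to refine $E_\del$ to a finite edge-set $F\supseteq E_\del$ whose components have diameter $<\eps$, and apply \eqref{comps} to $F$ to place, for all large $n$, points of $G_n$ itself within $\eps$ of $u_j$ and $v_j$ inside $K_j$; similarly, applying \eqref{comps} to subdivisions of each $e\in E_\del$ shows that a subarc of $e$ of length at least $\ell(e)-\eps$ lies in $G_n$ for all large $n$. Then do the bookkeeping measure-theoretically, $\ell(G_n)=\Hm(G_n)\geq\Hm(G_n\cap\bigcup E_\del)+\sum_j\Hm(G_n\cap K_j)$, so that no alignment of the edges of $G_n$ with $E_\del$ is needed, and run the squeeze with $\liminf$ and $\limsup$ in place of monotone limits (this also yields the existence of $\lim\ell(G_n)$, which otherwise you only get from the unjustified monotonicity). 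With these adjustments your proof is sound and coincides with the paper's.
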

\begin{proof}

We claim that the following inequalities hold, from which the assertion follows
$$\lim \ell(G_n) \geq \limsup \ch^G_\del \geq \liminf \ch^G_\del \geq \Hm(X) \geq \lim \ell(G_n).$$

Let us first show that $\lim \ell(G_n) \geq \ch^G_\del$ \fe\ \del. For this, fix \del\ and let $K$ be a component of $X\sm E_\del$ as in the definition of $\ch^G_\del$. If $n$ is large enough, then by \Prr{dense}, $G_n$ contains almost all of $\bigcup E_\del$, as well as a pair of points $x, y \in K$ with $d_X(x,y)$ arbitrarily close to $diam(K)$. Moreover, by \eqref{comps} we know that $G_n \cap K$ is connected, and so \ti\ a path $P_K$ in $K \cap G_n$ joining $x$ to $y$. Thus, denoting the set of  components of $X\sm E_\del$ by $\ck$, we have
$$\sum_{K\in \ck} \ell(P_K) \geq \sum_{K\in \ck}  diam(K) - \eps$$ 
for an arbitrarily small \eps. Since $G_n$ contains all the (pairwise disjoint) paths $P_K$ as well as most of $\bigcup E_\del$, we have 
$$\ell(G_n) \geq  \ell(E_\del) + \sum_{K\in \ck} diam(K) - \eps = \ch^G_\del- \eps$$
 and the inequality follows.

To see that $\liminf \ch^G_\del \geq \Hm(G)$, notice that $E_\del$ can be covered by a set of balls $\cw$ each of diameter less than \del\ with total diameter $\sum_{W\in \cw} diam(W) \leq \ell(E_\del)$ by the definition of 
$\ell(E_\del)$, and that the union of $\cw$ with the set of  components of $X\sm E_\del$ appearing in the definition of $\ch^G_\del$ is a candidate for the cover \seq{U} in the definition of $\ch^n_\del$.

Finally, the inequality $\Hm(G) \geq \lim \ell(G_n)$ is an easy consequence of the definitions and the fact that every edge of $G_n$ has, by definition, the same length as an arc in $X$.

\end{proof}

As a corollary, we obtain that both $\lim_{\del \to 0} \ch^G_\del$ and $\lim \ell(G_n)$ is independent of the choice of the \des\ $E$.

\section{The \pe\ decomposition theorem} \label{pedec}

In this section we formulate and prove a decomposition theorem for \gl\ continua of finite \Hm\ (\Tr{struct}) that will be useful in the proof of \Tr{rconv}, \Cr{corintr}, and the construction of \BM\ in \cite{bmg}. This decomposition will be based on the following notion:
\begin{definition}
A \defi{\pe} of a metric space $X$ is an open connected subspace $f$ \st\ $|\partial f|=2$ and no homeomorphic copy of the interval $(0,1)$ contained in $\cls{f}$ contains a point in $\partial f$. We denote the elements of $\partial f$ by $f^0,f^1$, and call them the \defi{endpoints} of $f$. Note that every edge is a \pe.
\end{definition}

{\em Example 1:} Every two distinct points of an \R-tree are the endpoints of a (unique) \pe.

\medskip
{\em Example 2:} Start with the unit real interval $I$, and let $D \subset I$ be the complement of a Cantor set of positive Lebesque measure, i.e.\ $D$ is a set of disjoint open intervals of total length less than 1, say 1/2. Now for every such interval $J$, add to the space a copy $J'$ of $J$ so that $J$ and $J'$ have the same endpoints, to obtain a space $X$ (\fig{CantorPE}).

\showFig{CantorPE}{A \gl\ \pe\ the \HM\ of which does not coincide with the sum of the lengths of its edges.}

Note that $X$ is graph-like (take the above intervals and their copies as the edges), and it is a \pe. An interesting fact about $X$ is that $\Hm(X)= 3/2$, although the sum of the lengths of its edges is 1. The reason is that a positive proportion of $\Hm(X)$ lies in the complement of the edges, which is a Cantor set.
\bigskip

\comment{
\begin{lemma}\label{}	
Every \pe\ of a \gl\ continuum $X$ is \locon.
\end{lemma}
\begin{proof}
By \Tr{glocon}, $X$ is \locon. 
It follows immediately from the definition of local connectedness that every open connected subspace of a \locon\ space is itself \locon. As \pe s have these properties, they are \locon. (We could have let $X$ be an arbitrary \locon\ metric space.)
\end{proof}
}

By \Lr{cpc}, \fe\ \pe\ $f$, there is an \arc{f^0}{f^1} $P_f$ in $f$. Note that the following inequalities hold:
\labtequ{Hld}{$\Hm(f) \geq \ell(P_f) \geq d(f^0,f^1).$}
We define the \defi{discrepancy} $\delta(f)$ of a \pe\ $f$ by  $\delta(f):=\Hm(f) - d(f^0,f^1)$, which by the above inequality is always non-negative. 

The main result of this section is the following decomposition theorem for \gl\ spaces of finite $\Hm$
\begin{theorem} \label{struct}
\Fe\ \gl\ continuum $X$ with \finhm, there is a set $\cf$ of pairwise disjoint \pe s of $X$ with $\sum_{f\in \cf} \Hm(f) = \Hm(X)$. Moreover, \fe\ $\eps>0$ we can choose $\cf$ so that $\sum_{f\in \cf} \delta(f)< \eps$.
\end{theorem}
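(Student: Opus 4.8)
The plan is to build the family $\cf$ greedily from a graph approximation, using the Hausdorff-measure estimate of \Tr{HmeqL} as the bookkeeping device. Fix a \des\ $E$ of $X$ and a \gseq\ $\seq{G}$ of $X$ as in \Tr{txseq}, so that $\lim \ell(G_n) = \Hm(X) = \lim_{\del\to 0} \ch^G_\del$. Given $\eps>0$, first I would pick $\del>0$ small enough (and a corresponding finite edge-set $E_\del\subseteq E$ as in the definition of $\ch^G_\del$) so that $\ch^G_\del < \Hm(X) + \eps$; by \Tr{HmeqL} this is possible, and after enlarging $E_\del$ we may also assume $\ell(E\setminus E_\del)$ is as small as we like. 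The idea is that the components $K$ of $X\sm \bigcup E_\del$, together with suitable enlargements across the incident edges of $E_\del$, will furnish the \pe s, and the edges of $E_\del$ themselves (being edges, hence \pe s with zero discrepancy) will be thrown in directly.

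The key steps, in order. \emph{(1)} Show that each component $K$ of $X\sm \bigcup E_\del$ is itself (the interior of) a \pe\ with a controlled number of endpoints — here I expect to need \Lr{finE} (finitely many components, each meeting $\overline{E_\del}$) and \Cr{basis}/\Tr{glugl} to see that $\overline K$ is a closed connected subspace whose frontier is a finite set of edge-interior points. A component $K$ can have more than two frontier points, so strictly it need not be a \pe; the fix is to subdivide: inside $\overline K$, which is \arcon\ by \Lr{cpc}, one realizes $\overline K$ as a finite union of \pe s by cutting along arcs between frontier points — or, more cheaply, one first reduces to the case of two frontier points by enlarging $E_\del$ so that every component has at most two (and then handling the degenerate one-point case separately, as in \Prr{dense}). \emph{(2)} Let $\cf$ consist of the edges in $E_\del$ together with one \pe\ $f_K$ per component $K$ (its interior being $K$ together with a half-open interval of each incident $E_\del$-edge, glued so the $f_K$ stay pairwise disjoint and their closures cover the parts of $E_\del$-edges not used elsewhere — the same enlargement trick as in the proof of \Cr{basis}). \emph{(3)} Verify $\sum_{f\in\cf}\Hm(f) = \Hm(X)$: since the $f$ are pairwise disjoint and their closures cover $X$ up to a countable (hence $\Hm$-null, as it is $0$-dimensional... no — up to the totally disconnected remainder $X\setminus\bigcup E$, whose $\Hm$-contribution must be accounted for), additivity of $\Hm$ over this partition gives $\sum \Hm(f) \le \Hm(X)$, and the reverse inequality is immediate from $\Hm$ being an outer measure. \emph{(4)} Bound the discrepancy: $\sum_{f\in E_\del}\delta(f)=0$ since edges have $\delta=0$, and $\sum_K \delta(f_K) = \sum_K \Hm(f_K) - \sum_K d(f_K^0,f_K^1) \le \sum_K \Hm(f_K) \le \sum_K \big(\mathrm{diam}(K) + (\text{tiny edge bits})\big)$; comparing with $\ch^G_\del < \Hm(X)+\eps$ and with $\sum_{f\in\cf}\Hm(f)=\Hm(X)$ forces $\sum_K\big(\mathrm{diam}(K) - d(f_K^0,f_K^1)\big)$, hence $\sum_K\delta(f_K)$, to be $O(\eps)$. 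Rescaling $\eps$ at the start gives the stated bound.

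The step I expect to be the main obstacle is \emph{(1)} together with the measure-accounting in \emph{(3)}: one must be careful that the totally disconnected set $X\setminus\bigcup E$ is not $\Hm$-negligible (Example~2 is exactly the warning that a positive proportion of $\Hm(X)$ can sit in a Cantor-type remainder), so the partition of $X$ into the chosen \pe s must genuinely be a partition up to an $\Hm$-null set, with every stray point of $X\setminus\bigcup E$ absorbed into the closure of some $f_K$. Getting the \pe s pairwise disjoint while their closures cover $X$, and simultaneously keeping $|\partial f_K|=2$, is the delicate combinatorial-topological bookkeeping; the enlargement-across-edges device from \Cr{basis} is the right tool, but it has to be applied consistently around each endvertex of $E_\del$ so that the half-open intervals handed out to neighbouring \pe s do not overlap and leave no gap. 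Once the partition is in place, \Tr{HmeqL} does the rest almost mechanically.
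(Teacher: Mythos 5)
Your overall strategy --- take a fine finite edge-set $E_\del$, declare its edges to be \pe s, and package each component $K$ of $X\sm\bigcup E_\del$ (enlarged across its incident edges) as one further \pe\ --- stalls at exactly the two points where the paper's proof has to work hard. The first gap is step (1). A component $K$ has one frontier point per incident edge-end, and in general this number is at least $3$ and \emph{cannot} be reduced to $2$ by enlarging $E_\del$: at a branch point of the continuum (the modified Sierpinski gasket of \fig{examples2}, or already a degree-$3$ vertex of a finite graph) every finite edge-set leaves a component with at least three incident edge-ends. Your fallback of ``cutting along arcs between frontier points'' is precisely the hard part, because a \pe\ must satisfy that \emph{no} open arc in its closure passes through an endpoint; an arbitrarily chosen cut point on such an arc will typically lie on a transversal arc of $K$ and so cannot serve as an endpoint of a \pe\ (Example~2 is the warning: a point of the Cantor set bounding a doubled interval lies in the interior of two distinct arcs). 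Locating admissible cut points is what the paper's machinery of bridged and super-bridged subarcs of a diametral arc $\cp$ of $K$, and the set $\Pi$ of endpoints of \emph{maximal} super-bridged subarcs, is for; nothing in your plan substitutes for it, and this is also why the paper needs an $\oo$-step recursion rather than your one-shot partition.

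The second gap is the discrepancy bound in step (4). You use $\Hm(f_K)\le diam(K)+(\text{tiny})$, but the Hausdorff measure of a connected set of small diameter can be arbitrarily large (a component of diameter $<\del$ may contain total edge-length $1$), so this inequality is false; what \Tr{HmeqL} actually yields is $\sum_K\bigl(\Hm(K)-diam(K)\bigr)<\eps$. Even granting that, $\delta(f_K)=\Hm(K)-d(f_K^0,f_K^1)$ splits as $\bigl(\Hm(K)-diam(K)\bigr)+\bigl(diam(K)-d(f_K^0,f_K^1)\bigr)$, and the second term is not controlled: the two frontier points of $K$ sit wherever the incident $E_\del$-edges happen to attach and may be very close to each other while $diam(K)$ is not. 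The paper avoids this by forcing all endpoints of its \pe s onto an arc $\cp_K$ joining two points at distance $diam(K)$, so that the triangle inequality along $\cp_K$ converts the diameter estimate into a lower bound on $\sum_f d(f^0,f^1)$. So your skeleton (use \Tr{HmeqL}, keep the edges of $E_\del$, decompose the components) matches the paper's, but the two essential ideas --- where one is allowed to cut, and how to certify $d(f^0,f^1)\approx\Hm(f)$ --- are missing.
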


Example 2 above shows why this assertion becomes false if we replace the word `\pe s' by the word `edges'. It also gives a lot of insight into its involved proof. 

Before proving \Tr{struct} we will collect some more simpler facts about \pe s that will also be useful later. Recall the definition of a \gseq\ from \Sr{ellH}.


\begin{lemma}\label{pseudo1}	
\Fe\ \pe\ $f$ of a \gl\ space $X$ and every \gseq\ $\seq{G}$ of $X$, almost every $G_n$ contains an \arc{f^0}{f^1} unless one of $f^0,f^1$ has an open neighbourhood contained in $f$.
\end{lemma}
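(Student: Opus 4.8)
The plan is to deduce from the hypothesis that $f^0\in G_n$ and $f^1\in G_n$ for all large $n$. Since $G_n$ is also connected for all large $n$ (apply \eqref{comps} with $F=\emptyset$, using that $X$ is connected), and a finite connected graph is arcwise connected, $G_n$ then contains an $f^0$--$f^1$~arc --- for instance by applying \Lr{patharc} to an $f^0$--$f^1$~path in $G_n$. So everything reduces to proving $f^0\in G_n$ eventually, the case of $f^1$ being symmetric.

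First I would unpack the exceptional clause. As $f$ is open we have $f^0\notin f$, so ``$f^0$ has an open neighbourhood contained in $f$'' has to be read as ``$f^0$ has an open neighbourhood $N$ with $N\setminus\{f^0\}\subseteq f$'', i.e.\ $f^0$ is isolated in $X\setminus f$; since $X\setminus f=(X\setminus\overline f)\cup\{f^0,f^1\}$, the negation of this is exactly $f^0\in\overline{X\setminus\overline f}$. So I may assume $f^0\in\overline f$ (which always holds, as $f^0\in\partial f$) together with $f^0\in\overline{X\setminus\overline f}$. Moreover, if $f^0$ lies on an edge $e$ of $X$ then $\{f^0\}$ is a component of $X\setminus\bigcup F$ for the two-element edge-set $F$ consisting of the two sub-edges of $e$ abutting $f^0$ on either side, and \eqref{comps} applied to this $F$ (with $C=\{f^0\}$) gives $f^0\in G_n$ for all large $n$; so I may also assume $f^0$ lies on no edge of $X$.

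The main step: by \Lr{finsep} I would choose a finite edge-set $S$ separating $f^0$ from $f^1$, obtaining disjoint open sets $W_0\ni f^0$ and $W_1\ni f^1$ with $X=W_0\cup\bigcup S\cup W_1$; note $f^0\notin\bigcup S$ since $f^0$ lies on no edge. As $X$ is locally connected (\Tr{glocon}), the component $C$ of $X\setminus\bigcup S$ containing $f^0$ is open in $X$, and $C\subseteq W_0$ because $W_0$ is clopen in $X\setminus\bigcup S$. Thus $C$ is an open neighbourhood of $f^0$, so $C\cap f\neq\emptyset$ (as $f^0\in\overline f$) and $C\cap(X\setminus\overline f)\neq\emptyset$ (as $f^0\in\overline{X\setminus\overline f}$), and both sets are open in $X$. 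Since $\bigcup E$ is dense in $X$ --- its complement is totally disconnected, and $X$, being a continuum with more than one point, has no isolated points, so by local connectedness $X\setminus\bigcup E$ has empty interior --- each of $C\cap f$ and $C\cap(X\setminus\overline f)$ contains a point lying on an edge of $X$; fix such points $p_1\in C\cap f$ and $p_2\in C\cap(X\setminus\overline f)$. By the singleton argument above, $p_1,p_2\in G_n$ for all large $n$, and \eqref{comps} applied to $S$ gives, for all large $n$, a unique component $D$ of $G_n\setminus\bigcup S$ meeting $C$, with $D\subseteq C$ (a connected set meeting the clopen set $C$ lies in $C$). As $p_1,p_2\in C\setminus\bigcup S$ and $p_1,p_2\in G_n$, both $p_1,p_2\in D$. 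Finally, $D$ is a finite connected graph, so it contains a path $\gamma$ from $p_1$ to $p_2$; then $\gamma\subseteq D\subseteq C\subseteq W_0$, so $\gamma$ avoids $f^1$, while $\gamma$ runs from $p_1\in f$ to $p_2\in X\setminus\overline f\subseteq X\setminus f$, so it must meet $\partial f=\{f^0,f^1\}$; hence $\gamma$ passes through $f^0$, and therefore $f^0\in G_n$ for all large $n$.

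The one genuinely delicate point is the interpretation of the exceptional clause and checking that its failure is equivalent to $f^i\in\overline{X\setminus\overline f}$ --- this is precisely the property that $f^i$ is ``approached from outside $\overline f$'', and it is what makes every arbitrarily small connected open neighbourhood $C$ of $f^i$ straddle the frontier $\partial f$, thereby forcing the (connected) graphs $G_n$, which must join the $f$-side of $C$ to its outside, to pass through $f^i$ itself. The remaining ingredients --- density of $\bigcup E$, openness of components of open subsets under local connectedness, the use of \eqref{comps} with singleton components to place individual points into $G_n$, and the fact that a path from $f$ to $X\setminus f$ meets $\partial f$ --- are routine.
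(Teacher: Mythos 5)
Your overall route is different from the paper's: you reduce the lemma to showing $f^0,f^1\in G_n$ for large $n$ and then invoke connectedness of $G_n$, and several of your ingredients are sound --- the reading of the exceptional clause, the singleton trick via \eqref{comps} to force edge-interior points into $G_n$, and the endgame (the unique component $D$ of $G_n\sm \bigcup S$ meeting $C$ lies in $C\subseteq W_0$, joins a point of $f$ to a point of $X\sm\cls{f}$, hence passes through $f^0$). The genuine gap is the sentence claiming that the component $C$ of $X\sm\bigcup S$ containing $f^0$ is open in $X$ by local connectedness. Edges are open, so $\bigcup S$ is open and $X\sm\bigcup S$ is \emph{closed}; \Tr{glocon} gives openness of components of open subspaces only. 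In fact $C$ is never open in $X$: by \Lr{finE} it contains an endpoint of some edge of $S$, and every neighbourhood of such a point meets that edge. Consequently the two statements you derive from openness --- $C\cap f\neq\emptyset$ and $C\cap(X\sm\cls{f})\neq\emptyset$ --- do not follow and can actually fail for the arbitrary separator handed to you by \Lr{finsep}: if edges of $S$ abut $f^0$, then $C$ may be the singleton $\{f^0\}$, or may lie entirely inside $\cls{f}$ (or entirely outside $f$) even though $f^0\in\cls{X\sm\cls{f}}$; a vertex $f^0$ of degree $3$ with $f$ one of its incident edges and $S$ consisting of initial segments of the edges at $f^0$ already illustrates this. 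Then $p_1$ or $p_2$ does not exist and the argument stops; when $f^0$ has infinite degree (the hard case) there is no cheap way around it.

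This is exactly where the paper's proof does its real work: it does not take an arbitrary separating edge-set but enlarges $E$ so that for each $i$ some edge of $E$ has an endpoint in $C^i\sm f$, and then runs a connectedness argument on the components meeting $f$ to produce a component $C\subseteq f$ joined by edges of $E$ to both $C^0$ and $C^1$; the arc of $G_n$ obtained by concatenation crosses $f$ and therefore contains an \arc{f^0}{f^1} inside $\cls{f}$ --- incidentally the form of the conclusion that is actually used later (\Cr{corstruct}, \Lr{rpseudoe}), whereas your arc may wander anywhere in $G_n$. Your approach is likely repairable, but not for free: for instance, take a small connected open $O\ni f^0$ with finite edge-frontier (\Cr{basis}) avoiding $f^1$, let $F$ consist of short sub-edges around the frontier points of $O$, and choose a still smaller connected basic neighbourhood $O'\subseteq O\sm\cls{\bigcup F}$ of $f^0$; then $O'$, and hence the component of $f^0$ in $X\sm\bigcup F$, meets both $f$ and $X\sm\cls{f}$ in nonempty open sets, after which your density and \eqref{comps} arguments go through. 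As written, however, the openness claim is a false step on which the proof depends, not a routine detail.
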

\begin{proof}
By \Lr{finsep} we can find a finite edge-set $E$ separating $f^0$ from $f^1$. If the component $C^0$ of $X \sm \bigcup E$ containing $f^0$ is contained in $f$ we are done, and similarly with the component $C^1$ containing $f^1$. Otherwise, by choosing $E$ larger if needed, we may assume that for each $i=0,1$ there is an edge $e_i$ in $E$ with an endpoint in $C^i \sm f$.

Moreover, we may assume that each edge in $E$ has an endpoint in either $C^0$ or $C^1$, for all other edges can  be removed from $E$ without losing any of the above properties.
We claim that \ti\ a  component  $C$ of $X \sm \bigcup E$ contained in $f$ and containing enpdoints $p,q$ of edges $e_3,e_4 \in E$ whose other endpoints lie in $C^0$ and $C^1$ respectively. Indeed, let $\cf$ be the set of components of $X \sm \bigcup E$ meeting $f$ excluding $C^0$ and $C^1$, and note that since $f$ is connected, each element of $\cf$ is contained in $f$. By \Lr{finE}, each element $F$ of $\cf$ is open in $X \sm \bigcup E$ and contains an endpoint of an edge $e_F$ in $E$. By the remark above, the other endpoint lies in either $C^0$ or $C^1$. Let $\cf^0$ be the set of those $F\in \cf$ \st\ \ti\ an edge $e_F$ as above with an endpoint in $C^0$, and define $\cf^1$ similarly. Now if our claim is false, then $\cf^0 \cap \cf^1 = \emptyset$; but then, we can find two disjoint open sets separating $f$ contradicting its connectedness: let $O_1:= C^0 \cup \bigcup \cf^0 \cup \{e\in E \mid e \text{ has an endpoint in } C^0$ and define $O_2$ similarly.

Now by \eqref{comps}, the intersection of $G_n$ with each of $C^0, C$ and $C^1$ is connected for large enough $n$. Moreover, by \Prr{dense}, $G_n$ contains an interval of each element of $E$ for large enough $n$. Thus $G_n$ eventually contains a path from $e_0$ to $e_3$ in $C^0$, a path from $e_3$ to $e_4$ in $C$ and a path from $e_4$ to $e_1$ in $C^1$. Concatenating these three paths with the edges $e_3, e_4$, we obtain an arc of $X$ that starts and finishes outside $f$ but meets $f$. Since $f^0,f^1$ separate $f$ from the rest of $X$, this arc must contain an \arc{f^0}{f^1}.
 
\medskip

Note that if $f^1$ does have an open neighbourhood contained in $f$, then we cannot guarantee that any $G_n$ contains an \arc{f^0}{f^1}, but we can guarantee that \fe\ $\eps>0$, $G_n$ eventually contains an arc from $f_0$ to a point \eps-close to $f^1$ by the same arguments.
 

\comment{
\begin{enumerate}
\item there is only one edge in $E^0_n$
\item there are at least two edges in $E^0_n$, but all of them are contained in $f$
\item $E^0_n$ contains edges both inside and outside $f$
\end{enumerate}
}

\end{proof}

Many of the properties of edges are preserved by \pe s as may already have become apparent. Here way observe some more that will be usefull later. Note that \Lr{finE} remains true if one replaces the edge-set $E$ by a set of \pe s: the only properties of an edge used in its proof were the fact that edges are open, and their frontier comprises two points, and this is also true for \pe s. We repeat that lemma with edges replaced by \pe s

\begin{lemma} \label{finEpe}
\Fe\ finite set $E$ of pairwise disjoint \pe s of a connected topological space $X$, the subspace $X \sm \bigcup E$ has only finitely many components, each of which is clopen in $X \sm \bigcup E$ and contains a point in $\overline{E}$.
\end{lemma}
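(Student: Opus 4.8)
The plan is to re-use, almost verbatim, the argument behind \Lr{finE}: that proof only exploits the facts that each element of the edge-set is open in $X$ and has a frontier of exactly two points, and a \pe\ has both properties by definition. Concretely, I would argue by induction on $k:=|E|$ (assuming $E\neq\emptyset$, the empty case being trivial or excluded). The base case $k=1$ is the statement for a single \pe, which is exactly \cite[Section~2]{ThomassenVellaContinua} read with ``\pe'' in place of ``edge'' --- admissible since, as just noted, their proof uses nothing about an edge beyond openness and the two-point frontier; it yields that $X\sm f$ has finitely many components, each clopen in $X\sm f$ and each containing an endpoint of $f$.

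For the inductive step, write $E=\{f_1,\dots,f_k\}$ and apply the base case to $f_k$, obtaining components $D_1,\dots,D_m$ of $X\sm f_k$, each clopen in $X\sm f_k$ --- hence closed in $X$, as $X\sm f_k$ is closed --- and each containing an endpoint of $f_k$. I would then check that for every $i<k$ the whole set $\overline{f_i}$ lies inside a single $D_j$: the set $f_i$ is disjoint from $f_k$ by hypothesis, and no endpoint $p$ of $f_i$ lies in $f_k$, since otherwise the open set $f_k$ would be a neighbourhood of the point $p\in\overline{f_i}$ and would therefore meet $f_i$; thus $\overline{f_i}\subseteq X\sm f_k$, and, being connected, $\overline{f_i}$ is contained in a unique component $D_{j(i)}$. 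Because $D_{j(i)}$ is closed in $X$ and $f_i$ is open in $X$, one verifies routinely that $f_i$ is open in the subspace $D_{j(i)}$ and that its closure and frontier computed in $D_{j(i)}$ equal those computed in $X$; hence $f_i$ is a \pe\ of the connected space $D_{j(i)}$, and the $f_i$ with a common value $j(i)=j$ form a set $E_j$ of pairwise disjoint \pe s of $D_j$. Since $X\sm\bigcup E=\bigcup_{j}(D_j\sm\bigcup E_j)$ is a partition into sets that are clopen in $X\sm\bigcup E$ (each equal to $D_j\cap(X\sm\bigcup E)$ with $D_j$ clopen in $X\sm f_k$), the components of $X\sm\bigcup E$ are exactly the components of the various $D_j\sm\bigcup E_j$. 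If $E_j\neq\emptyset$, the induction hypothesis applied inside $D_j$ gives finitely many such components, each clopen in $D_j\sm\bigcup E_j$ and each meeting $\overline{\bigcup E_j}\subseteq\overline{\bigcup E}$; if $E_j=\emptyset$ then $D_j$ is its own unique component and contains an endpoint of $f_k$, again a point of $\overline{\bigcup E}$. Summing over $j$ gives the claim.

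I do not anticipate a real obstacle. The two points deserving care are (i) re-reading the $|E|=1$ argument of \cite{ThomassenVellaContinua} to confirm that it indeed uses only openness and the two-point frontier, so that it transfers to a single \pe, and (ii) the verification that a \pe\ of $X$ whose closure happens to lie in a clopen piece $D$ of $X\sm f_k$ is still a \pe\ of $D$; this reduces to the fact that $D$ is closed in $X$, so that closures (and hence frontiers) taken relative to $D$ coincide with those taken in $X$, while the ``no arc through $\partial f$'' clause depends only on the set $\overline{f}$ itself.
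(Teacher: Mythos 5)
Your proposal is correct and follows exactly the route the paper takes: it observes that the proof for edge-sets (Lemma~\ref{finE}, itself reduced to the $|E|=1$ case of \cite{ThomassenVellaContinua}) uses only that each element is open with a two-point frontier, properties shared by \pe s. You merely spell out the induction on $|E|$ that the paper dismisses as ``straightforward'', and your verifications (that $\overline{f_i}$ lands in a single clopen component and remains a \pe\ there) are sound.
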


Next, we extend \eqref{comps} to \pe s:

\labtequ{comps2}{\Fe\ finite set $F$ of pairwise disjoint \pe s, every \gseq\ \seq{G}, and every component $C$ of $X\sm F$, \ti\ a unique component of $G_i\sm F$ meeting $C$ for almost all $i$.}
To prove this, note that $\partial C$ is contained in $\bigcup_{f\in F} \partial f$, in particular it is finite. For $p\in \partial C$, we let $E_p$ be a finite edge-set separating $p$ from the other endpoints of all \pe s\ $f\in F$ with $p\in \partial f$ (\ti\ at least one such $f$ by the previous remark, but there may be several); such an edge-set exists by \Lr{finsep}.

Let $E:= \bigcup_{p\in \partial C} E_p$, and let $E':= E \cap (X\sm C)$. Note that $E'$ is a finite edge-set still separating any $p\in \partial C$ from the other endpoints of all \pe s\ $f\in F$ with $p\in \partial f$. Since $E' \cap C = \emptyset$ by definition, $C$ is contained in a component $C'$ of $X \sm E'$. We can apply \eqref{comps} to $C'$ to deduce that for almost every $G_i$, the subgraph $G_i \cap C'$ is connected. We claim that $G_i \cap C$ is also connected for every such $i$. Indeed, each $p\in \partial C$ separates any \pe\ $f$ with $p\in \partial f$ from $C$, and so no arc of $G_i\cap C'$ connecting two points of $C$ can visit $f$, which implies that $(G_i\cap C') \sm F = G_i \cap C$ is connected as claimed.

\begin{lemma}\label{Sarcs}	
Let $S$ be a finite set of points of a \gl\ continuum $X$. Then for every component $C$ of $X \sm S$ and every $p\in \partial C$, the subspace $C\cup \sgl{p}$ is \arcon.
\end{lemma}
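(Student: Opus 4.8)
The plan is to reduce the statement to two facts --- that $C$ itself is arcwise connected, and that the prescribed point $p$ is the endpoint of an arc from within $C$ that lies in $C\cup\{p\}$ --- and then to glue arcs together using \Lr{patharc}.

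First I would record the basic topology of $C$: since $S$ is finite it is closed, so $X\sm S$ is open, and as $X$ is locally connected (\Tr{glocon}) its components are open. Thus $C$ is clopen in $X\sm S$, whence $\partial C\subseteq S$, $\cls C=C\cup\partial C$, and $\cls C\sm S=C$. Next I would note that $X$ is locally arcwise connected. Indeed, by \Cr{basis} the topology of $X$ has a basis of connected open sets $O$ whose frontier is a finite set of points, each lying in the interior of an edge, and from the construction in the proof of \Cr{basis} one sees that every such boundary point $z$ has a neighbourhood $U$ in $X$ meeting $\cls O$ only in a subarc of an edge that has $z$ as an endpoint. Given $x,y\in O$, \Lr{cpc} applied to the closed connected space $\cls O$ yields an arc $A$ from $x$ to $y$ in $\cls O$; were $A$ to pass through some $z\in\partial O$ at an interior parameter, then near that parameter $A$ would lie in $U\cap\cls O$ with $z$ an interior point of a subarc, which is impossible in an arc having $z$ as an endpoint. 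Hence $A\subseteq O$, so $O$ is arcwise connected, $X$ is locally arcwise connected, and therefore every connected open subspace of $X$ is arcwise connected (its arc-components are open by local arc-connectedness and form a partition by \Lr{patharc}, so by connectedness there is just one). In particular $C$ is arcwise connected.

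The heart of the matter is to find an arc from some point of $C$ to the given point $p$ staying in $C\cup\{p\}$. Using \Cr{basis} I would choose a connected open neighbourhood $N$ of $p$ with frontier a finite set of interior edge-points and with $\cls N\cap S=\{p\}$ (possible since the basic sets can be taken of arbitrarily small diameter). Then $N\sm\{p\}=N\sm S$, so each component of the open set $N\sm\{p\}$ is open and lies in a single component of $X\sm S$; since $p\in\cls C$ forces $N$ to meet $C$, some component $D$ of $N\sm\{p\}$ satisfies $D\subseteq C$. As $N$ is connected and the components of $N\sm\{p\}$ are clopen in $N\sm\{p\}$, we must have $p\in\cls D$ (else $D$ would be clopen and proper in $N$). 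Moreover $\cls D\subseteq\cls N=N\cup\partial N$, and $\cls D$ meets no component of $N\sm\{p\}$ other than $D$, so $\partial D\subseteq\{p\}\cup\partial N$. Applying \Lr{cpc} to the closed connected space $\cls D$ gives an arc $A$ from a point $d_0\in D$ to $p$ inside $\cls D$; set $t_0:=\min\{t: A(t)\in\partial D\}$, which exists and is positive because $A(0)=d_0\notin\partial D$ while $\partial D$ is closed and contains $p=A(1)$. If $A(t_0)$ were a point of $\partial N$, then as in the previous paragraph $t_0$ would have to be an endpoint parameter, forcing $t_0=1$ and hence $A(t_0)=p\notin\partial N$, a contradiction; so $A(t_0)=p$, and injectivity of $A$ gives $t_0=1$. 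Thus $A\subseteq D\cup\{p\}\subseteq C\cup\{p\}$.

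Finally I would assemble the pieces. For $x,y\in C\cup\{p\}$: if $x,y\in C$, arcwise connectedness of $C$ gives an \arc{x}{y} inside $C\subseteq C\cup\{p\}$; if $y=p$ (the case $x=y$ being trivial), concatenate an \arc{x}{d_0} in $C$ with the arc $A$ above to obtain a topological path from $x$ to $p$ whose image is contained in $C\cup\{p\}$, which by \Lr{patharc} contains an \arc{x}{p} in $C\cup\{p\}$. The main obstacle is the crux step: an arc from a point of $C$ obtained by applying \Lr{cpc} to $\cls C$ naturally terminates at some point of $\partial C$, not necessarily at the prescribed $p$, and passing to the small neighbourhood $N$ with $\cls N\cap S=\{p\}$ is precisely the device that confines the relevant boundary to $p$ together with harmless pendant edge-points.
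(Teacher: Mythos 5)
Your proof is correct, but it follows a genuinely different route from the paper's. The paper's argument fixes a disconnecting edge-set of $\overline{C}$, builds auxiliary finite graphs $G_n$ whose vertices are the components of $\overline{C}\sm \bigcup E_n$, deletes the vertices meeting $\partial C\sm\sgl{p}$, and then either converts a combinatorial $x^*$--$p^*$~path into an arc by applying \Lr{cpc} inside each component it visits, or derives a contradiction to the connectedness of $C$ from an exhaustion by the open sets $\bigcup U_n,\bigcup V_n$. You instead first show $X$ is locally arcwise connected (via \Cr{basis} and \Lr{cpc}), conclude that the open connected set $C$ is arcwise connected, and then reach $p$ by working inside a small basic neighbourhood $N$ with $\cls{N}\cap S=\sgl{p}$, so that the boundary of the relevant component $D$ of $N\sm\sgl{p}$ consists only of $p$ and edge-interior frontier points of $N$, which an arc in $\cls{D}$ cannot cross at an interior parameter; your gluing via \Lr{patharc} then finishes the job. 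This is cleaner and avoids the paper's somewhat delicate limiting argument. The one soft spot is that you twice rely not on the statement of \Cr{basis} but on a feature of its proof: that each frontier point $z$ of a basic set $O$ is approached by $O$ from only one side of the edge containing it, so that $U\cap\cls{O}$ is an interval with $z$ as an extreme point (this is what makes your injectivity argument ruling out an interior passage through $z$ work). That feature does hold for the sets constructed there, provided the edge-intervals are chosen with pairwise disjoint closures, but since the statement of \Cr{basis} only says the frontier points lie on edges, it would be worth recording this one-sidedness explicitly as an addendum to \Cr{basis} before using it.
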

\begin{proof}
We claim that $\overline{C}$ is a \gl\ continuum. Indeed, $\overline{C}$ is compact since it is a closed subset of the compact space $X$. Moreover, it is \gl, since the intersection of a \des\ of $X$ with $C$ is clearly a \des\ of $\overline{C}$. 

Let $E$ be a \des\ of $\overline{C}$, and \seq{E} an increasing sequence of finite subsets of $E$ \st\ $\bigcup E_n= E$.
Define an auxiliary graph $G_n$ as follows. The vertices of $G_n$ are the components of $\overline{C} \sm E_n$, and its edges are the edges in $E_n$, with $e\in E_n$ being incident to a vertex $x$ of $G_n$ whenever $e$ has an endpoint in the closure of the component $x$. It is not hard to see that $G_n$ is  connected since $X$ is, for the set of point inside any component of $G_n$ forms a component of $X$. We denote the vertex of $G_n$ containing a point $x\in \overline{C}$ by $x^*$.
Let $G'_n$ denote the graph obtained from $G_n$ by deleting all the vertices corresponding to the components containing a point in $\partial C \sm \sgl{p}$. Let $x$ be a point of $C$, for which we would like to find an \arc{x}{p}. We distinguish two cases.

The first case is when for some $n$, the graph $G'_n$ has a \pth{x^*}{p^*}. Then, if $P= v_0(=x^*) e_0 v_1 e_1 \ldots e_k v_{k+1}(=p^*)$ is such a path, we will transform it into the desired \arc{x}{p}. For this, we apply \Lr{cpc} to each component $v_i$ of $\overline{C} \sm E_n$ appearing in $P$ to obtain an arc in $v_i$ joining the endpoint of $e_i$ to the endpoint of $e_{i+1}$ in $v_i$, unless $v_i$ is $x^*$ or $p^*$ in which case the arc joins $x$ to the endpoint of $e_0$ or the endpoint of $e_k$ to $p$ respectively. Concatenating these arcs in the right order with the edges $e_i$ in $P$, we obtain the desired \arc{x}{p}.

The second case is when $G'_n$ has no such path \fe\ $n$. In this case we will obtain a contradiction to the connectedness of $C$. For this, let $C_p$ be the connected component of $G'_n$ containing $p^*$. Let $U_n\subset C$ denote the union of all vertices and edges in  $C_p$, and note that $U_n$ is an open subspace of $C$. Similarly, the union $V_n$ of all remaining vertices and edges of $G'_n$ is open. The sets $U_n,V_n$ are disjoint, but the do not yet separate $C$ as they miss the components $b^*$ for $b\in \partial C$. However, both $U_n,V_n$ are increasing in $n$ with respect to set inclusion, because every component of $\overline{C} \sm E_{n+1}$ is contained in a component of  $\overline{C} \sm E_{n+1}$, and so each vertex of $G_n$ decomposes into a connected subgraph of $G_{n+1}$ by an argument similar to that used to prove that $G_n$ is connected. Thus the subspaces $\bigcup U_n, \bigcup V_n$ of $C$ are open, disjoint, and their union is $C$, which contradicts the connectedness of $C$ proving that this case cannot occur.

\end{proof}

We can now prove the main result of this section.
\begin{proof}[Proof of \Tr{struct}]
Let $E$ be a finite edge-set of $X$, and consider a component $K$ of $X \sm \bigcup E$. Let $x,y$ be points of $K$ with $d(x,y)=diam(K)$ (which exist since $K$ is closed by \Lr{finE}). 
As $K$ is \arcon\ by \Lr{cpc}, \ti\ an \arc{x}{y} $\cp$ in $K$, as well as a \arc{q}{\cp} $A_q$ \fe\ 
$q\in \partial K\subset \partial E$. Let $q'\in \cp$ be the first point of $A_q$ on $\cp$; there might be several candidates for $q'$ as such an arc is not necessarily unique, but we just choose one of them. 

\medskip
\fig{figpes} shows how the situation could look like. Notice that the set $\Pi$ of bold vertices in that figure delimits a set of  \pe s. If $K$ is a finite graph, then it is not so hard to find such a set $\Pi$, but for a general \gl\ space we need to work harder. 
\epsfxsize=\hsize

\showFig{figpes}{The component $K$. Bold points indicate elements of $\Pi$.}

Note that if $diam(K)$ is close to $\Hm(K)$, which can be achieved using \Tr{HmeqL}, then $\cp$ will bear most of the measure $\Hm(K)$, and the rest of the graph in \fig{figpes} will have relatively short length. This can be used in order to prove that most of the length of $\cp$ lies in the aforementioned \pe s, and we will use this idea in our proof.

\medskip
Back to the general case, call a (closed) subarc $I$ of $\cp$ a \defi{bridged} subarc, if \ti\ a circle $S$ in $K$ \st\ $I= S\cap \cp$; 
we call $S \sm I$ the corresponding \defi{bridge} (\fig{figpes} shows several bridges). For each of the points $q'$ chosen above, we also declare $\sgl{q'}$ to be a \defi{bridged} subarc of $\cp$, and we declare each of the two endpoints $\cp^0,\cp^1$ of $\cp$ to be bridged as well.  Let $\cb$ be the set of bridged subarcs of $\cp$.


We call a closed subarc $I$ of $\cp$ \defi{super-bridged}, if all but countably many points of $I$ lie in $\bigcup \cb$ (our trivial bridged subarcs $q'$ and $\cp^0,\cp^1$ are also allowed as super-bridged).
We claim that 
\labtequ{maxsb}{each point $p\in \overline{\bigcup \cb}$ lies in a maximal super-bridged subarc of $\cp$.}
%

For this, let $D$ be a countable dense subset of $\cp$ containing the (finite) set $\{q'\mid q\in \partial K\} \cup \{\cp^0,\cp^1\}$.
\Fe\ $x,y\in D$, we define the subset $$\cp_{x,y}:= \overline{\bigcup_{b \in \cb, b\cap [x,y] \neq \emptyset} b}$$ of $\cp$, where $[x,y]$ denotes the subarc of $\cp$ bounded by $x,y$.
Let $Q$ be the set of all the $\cp_{x,y}$ that are super-bridged (hence connected) and contain $p$. 

We claim that $Q$ is non-empty. Indeed, recall that \ti\ at least one bridged arc $b$ containing $p$.
If $b$ is non-trivial, then it contains points $x,y$ of our dense subset $D$ of $\cp$, and then $\cp_{x,y}$ contains $p$ and is connected, and hence it lies in $Q$. If $b$ is trivial, then its unique point $p$ coincides, by the definition of $\cb$, with one of the points in $\{q'\mid q\in \partial K\} \cup \{\cp^0,\cp^1\}$. Since we chose $D$ so as to contain these points, it is now easy to check that $\cp_{p,p}\in Q$ in this case.

We claim that $I:= \overline{\bigcup Q}$ is a maximal super-bridged subarc of $\cp$. Easily, $\bigcup Q$, and hence $I$, is connected since it is the union of connected sets with a common point $p$. To see that $I$ is super-bridged, note that any point in its interior that is not in $\bigcup \cb$ lies in some $\cp_{a,b}$. Since there are only countably many such sets, and each of them, being super-bridged, contains at most countably many such points, there are at most countably many such points in $I$ as desired. It remains to check that $I$ is maximal with these properties. If not, then \ti\ a $d\in \cb$ such that $d \sm I$ is non-empty and $I\cup d$ is connected. Thus \ti\ $\cp_{a,b}\in Q$ \st\ $\cp_{a,b}\cup d$ is connected.
Then, as $d \sm I$ is non-empty, $d$ starts before $\cp_{a,b}$ or ends outside $\cp_{a,b}$ (or both).
In the former case, the set  $J:=\cp_{d,b}$ is super-bridged because it is the union of $\cp_{a,b}$ with $d$. In the latter case, $J:=\cp_{a,d}$ is super-bridged because it is the union of $\cp_{a,b}$ with some arcs starting inside $\cp_{a,b}$. In both cases, $J$ is contained in $Q$, and hence in $I$. But as $d\subseteq J$ by definition, this contradicts the assumption that $d \sm I$ is non-empty, proving that $I$ is the subarc sought after by \eqref{maxsb}.

\medskip


Since \fe\ $q\in \partial K$ we have $q'\in \cb$ by the definition of $\cb$, \eqref{maxsb} implies that $q'$ lies in a maximal super-bridged subarc $P_q$ \fe\ such $q$ (we might have $P_q=P_r$ for $q\neq r \in {\partial} K$ though). Similarly, each $q\in \{\cp^0,\cp^1\}$, \ti\ a maximal super-bridged subarc $P_q$ containing $q$. Let $\Pi$ be the set of endpoints of the $P_q$ \fe\ $q\in \ddot{\partial} K:= \partial K \cup \{\cp^0,\cp^1\}$. We claim that 
\labtequ{pes}{each component of $K \sm \Pi$ that meets $\cp \sm \bigcup_{q\in \ddot{\partial} K} P_q$ is a \pe.}
For this, let $C$ be such a component.
We will prove that $\partial C$ contains no point in $\partial K \sm \Pi$ and, and use this to prove  $|\partial C| = 2$. Indeed, suppose $\partial C$ contains a point $q \in \partial K \sm \Pi$. 
Then by \Lr{Sarcs}, $C\cup \{q\}$ contains a \arc{\cp}{q}  $A$. Let $q''\in \cp$ be the other endpoint of $A$.
Concatenating $A$ with the  \arc{q}{q'} $A_q$ chosen at the beginning of this proof, and applying \Lr{patharc} if needed, yields an arc witnessing the fact that the subarc $B$ of $\cp$ between $q',q''$ is bridged, and this is true even if $q'=q''$. But then $B$ can be used to extend $\cp_q$, contradicting the fact that $\cp_q$ is a maximal super-bridged subarc of $\cp$. This proves $\partial C \cap \partial K \sm \Pi= \emptyset$.

Next, we claim that $\partial C \subseteq \partial K \cup \Pi$. For if $p\in \partial C$, then since $C$ is closed in $X \sm (\partial K \cup \Pi)$, we have $p\in C$. By the local-connectedness of $X$, we can then find a connected open neighbourhood $O$ of $p$ avoiding $\partial K \cup \Pi$. This $O$ is a subset of $C$ by the definition of the latter, and contradicts the assumption that $p\in \partial C$, establishing our claim that $\partial C \subseteq \partial K \cup \Pi$. 

It is easy to see that $\partial C$ contains at least two point of $\Pi$ by contstruction. Suppose now $\partial C$ contains 3 distinct point $q,r,s$ of $\Pi$, and suppose \obda\ they appear on $\cp$ in that order. Note that $C$ must contain some point $x$ outside $\cp$ to be a component of $K \sm \Pi$. Applying \Lr{Sarcs} twice, once for $p=q$ and once for $p=s$, we can obtain arcs from $x$ to each of $q,s$. Combining these two arcs as above we obtain a \arc{q}{s} in $C \cup \{q,s\}$. Easily, this arc contains an arc $A$ \st\ the interior of $A$ does not meet $\cp$ and $r$ lies between the endpoints of $A$ on $\cp$. But $r$ is by definition an endpoint of a maximal super-bridged arc $P_q$, and $A$ contradicts its maximality since it bridges an interval extending $P_q$. 

Putting the above observations together proves that $|\partial C| = 2$, which implies that $C$ is open as $\partial C \cap C = \emptyset$, and it remains to prove that no open arc in $\cls{C}$ contains a point in $\partial C$. Suppose to the contrary that $A$ is such an arc containing a point $p\in \partial C$. Then $A$ contains a point $x$ outside $\cp \cap C$. By \Lr{Sarcs}, $x$ is connected to the other point $q\neq p$ of $\partial C$   by an arc $B$ in $C \cup \sgl{q}$. Concatenating the subarc of $A$ from $p$ to $x$ with $B$, and applying \Lr{patharc}, yields an \arc{p}{q} $Z$ in $\cls{C}$ Let $q'$ be the first point of $Z - p$ on $\cp$, which exists since $q\in \cp$. Then the subarc of $Z$ from $p$ to $q'$ bridges a non-trivial subarc of $\cp \cap C$. This subarc is thus bridged, and can be used to extend the super-bridged arc $P$ that has $p$ as an endpoint, which contradicts the maximality of $P$. This contradiction completes the proof of \eqref{pes}.

\medskip
Up to now we have dealt with a single component $K$, and despite proving \eqref{pes} it is still unclear that \pe s exist in $X$, since the set of components mentioned by \eqref{pes}  might be empty. We will now show that, although this can occur to some components of $X \sm \bigcup E$, if we choose $E$ appropriately then most components will have most of their \HM\ in \pe s. From now on we will follow a more global perspective, looking at all such components $K$ simultaneously, and applying the above contruction to each $K$.

For this we will also make use of the considerations of \Sr{ellH}. Recall that  by \Tr{HmeqL}, the quantity $\ch^G_\del$, defined as $\ch^G_\del:= \ell(E_\del) + \sum_{\stackrel{K \text{ is a component} }{\text{ of } X\sm E_\del}} diam(K)$, approximates $\Hm(X)$: \fe\ $\eps>0$, we can choose a finite edge-set $E$ \st\ 
\labtequ{sumdiam}{$\ell(E) + \sum diam(K) \geq \Hm(X) - \eps$.}

Applying the above contruction to each  component $K$ of $X \sm \bigcup E$, we fix a finite collection of arcs $\cp_K$ with $d(\cp_K^0,\cp_K^1)= diam(K)$ and a finite collection of maximal super-bridged subarcs $SB:= \{P_q \mid q\in \ddot{\partial} K, K \text{ is a component of }  X\sm E\}$. We are going to show that these arcs account for at most $\eps$ of $diam(K)$, from which will follow that the \pe s account for most of $diam(K)$. More precisely,
we claim that \fe\ component $K$ as above,
\labtequ{Hdiam}{ $\sum_{q\in \ddot{\partial} K} d(P_q^0,P_q^1) \leq \Hm(K) - diam(K)$,}
which is bounded above by \eqref{sumdiam}.

For this, let $P \in SB$, and recall that all but at most countably many points of $P$ lie in a bridged subarc in $\cb = \cb(K)$ for some component $K$. This means that $P \cap \bigcup \cb$ has full \HM\ $\Hm(P \cap \bigcup \cb)=\Hm(P)$, which implies that for an arbitrarily small $\eps_P>0$ we can find a finite subset $\cb'$ of $\cb$ \st\  $\Hm(\bigcup \cb')> \Hm(P) - \eps_P$. 

Recall that to each bridged arc $b\in \cb'$ \ti, by definition, a bridge $b^\cap$ with the same endpoints, which meets $\cp$ at its endpoints only. 
We are going to reroute $P$ through the bridges corresponding to the bridged arcs in $\cb'$ to obtain a new \arc{P^0}{P^1} $P'$: we replace each subarc $b$ of $P$ contained in $\cb'$ by its bridge $b^\cap$ to obtain a topological path from $P^0$ to $P^1$, and apply \Lr{patharc} to reduce it to the desired arc $P$. Note that $P'$ comprises finitely many subarcs of $P$ of total \HM\ at most $\eps_P$, joined by finitely many arcs contained in $\{b^\cap \mid b\in \cb'\}$. This implies
\labtequ{LA}{$\sum_{\{b^\cap \mid b\in \cb'\}} \ell(b^\cap) + \eps_P \geq \ell(P') \geq d(P^0,P^1)$.}
Suppose now that \eqref{Hdiam} is false, which means that \ti\ $\delta>0$ \st\ $\sum_{q\in \partial K} d(P_q^0,P_q^1) > \Hm(K) - diam(K) + \del$. Combining this with \eqref{LA} summed over all $A \in SB$, and using the fact that $\Hm(A)=\ell(A)$, yields
$$\sum_{\{b^\cap \mid b\in \cb'\}} \Hm(b^\cap) \geq \sum_{q\in \ddot{\partial} K} d(P_q^0,P_q^1) - \sum_{q\in \ddot{\partial}K} \eps_{P_q} > \Hm(K) - diam(K) + \del - \sum_{q\in \ddot{\partial}K} \eps_{P_q}.$$
Now note that the arcs $b^\cap$ above have only their endpoints on $\cp$, and all these arcs lie in $K$, and so $ \sum_{\{b^\cap \mid b\in \cb'\}} \Hm(b) + \Hm(\cp)\leq \Hm(K)$. As $\cp$ was chosen to be an arc joining two points at distance $diam(K)$, we have $\ell(\cp)\geq diam(K)$, and so we can rewrite the last inequality as $ \sum_{\{b^\cap \mid b\in \cb'\}} \Hm(b^\cap) \leq \Hm(K) - diam(K)$.
Combining this with the last inequality we obtain
 $$\Hm(K) - diam(K) \geq\sum_{\{b^\cap \mid b\in \cb'\}} \Hm(b^\cap)  > \Hm(K) - diam(K) + \del - \sum_{q\in \ddot{\partial}K} \eps_{P_q},$$
which means that $\sum_{q\in \ddot{\partial}K} \eps_{P_q}> \delta$. But we are allowed to choose the $\eps_{P_q}$ as small as we wish after fixing $\delta$. This contradiction proves \eqref{Hdiam}.

\medskip
Recall that, by \eqref{pes}, each component of $K \sm \Pi$ that meets $\cp \sm \bigcup_{q\in \ddot{\partial} K} P_q$ is a \pe; let $F=F(K)$ denote the set of these \pe s.
By the triangle inequality, we have
$$d(\cp_K^0,\cp_K^1) \leq \sum_{q\in \ddot{\partial} K} d(P_q^0,P_q^1) + \sum_{e\in F} d(e^0,e^1),$$
because $\cp$ is the concatenation of the arcs in $ \{P_q \mid q\in \partial K \cup \{\cp^0,\cp^1\}\} \cup (F \cap \cp) $ in an appropriate order. Moreover, we have $d(\cp_K^0,\cp_K^1)  = diam(K)$ because we chose $\cp$ so as to have this property. Using this in the above inequality and rearranging, we obtain
$$\sum_{e\in F} d(e^0,e^1) \geq  diam(K) - \sum_{q\in \partial K} d(P_q^0,P_q^1).$$ 
Plugging \eqref{Hdiam} into this yields
$$\sum_{e\in F(K)} d(e^0,e^1) \geq diam(K) -\Hm(K) + diam(K),$$ 
Summing this over all $K$, letting $PE_1:= \bigcup_{K \text{ is a component of }  X\sm \bigcup E} F(K)$, and using \eqref{sumdiam} now yields
$$\sum_{e\in PE_1} d(e^0,e^1)  \geq 2 \sum_\ck diam(K) - \sum_\ck \Hm(K) \geq 2 \Hm(X) -  2\ell(E) -2\eps - \sum_\ck \Hm(K),$$
where $\ck$ denotes the set of components $K$ of $X\sm \bigcup E$.
But as both $E$ and $\ck$ are finite (\Lr{finE}), we have $\Hm(X) = \ell(E) + \sum_\ck \Hm(K)= \Hm(E) + \sum_\ck \Hm(K)$, from which we deduce 
\labtequ{HF}{$\sum_{e\in PE_1} d(e^0,e^1) + \Hm(E) \geq  \Hm(X) - 2\eps.$}
Using \eqref{Hld} this implies
$$\sum_{e\in PE_1} \Hm(e) + \Hm(E) \geq  \Hm(X) - 2\eps.$$

Since every edge is also a \pe, the set $\cf_1:= E \cup PE_1$ is thus a set of pairwise disjoint \pe s of total \HM\ at least $\Hm(X) - 2\eps.$
\medskip

Let us put $\eps= \Hm(X)/M$ for some constant $M>1$ in the above construction. 
We thus managed to decompose the \gl\ continuum $X$ into a set of disjoint \pe s of total \HM\ $\frac{M-2}{M}\Hm(X)$ and a finite set of components with finite frontiers and total \HM\ $\frac{2}{M}\Hm(X)$. Note that each such component $C$ is itself a \gl\ continuum.  Thus we can repeat the whole process to decompose each such $C$ similarly into   \pe s of total \HM\ $\frac{M-2}{M}\Hm(C)$ and \gl\ components with finite frontiers in $X$. Iterating this proceedure recursively in \oo\ steps, we obtain a sequence \seq{\cf}, where $\cf_i= E_i \cup PE_i$, of \pe s \st\ the set $\cf:= \bigcup \cf_n$ is a set of 
pairwise disjoint \pe s of total \HM\ $\Hm( \bigcup \cf) = \sum_{f\in \cf} \Hm(f) = \Hm(X)$, where these equalities follow from the fact that \fe\ $\del>0$ \ti\ a finite subset $\cf'$ of $\cf$, namely those we have constructed after some step of our recursion,  with $\Hm( \bigcup \cf') = \sum_{f\in \cf'} \Hm(f)$ because they are pairwise disjoint, and $\Hm(X) \geq \Hm( \bigcup \cf') \geq \Hm(X)- \del$. 

Finaly, the total discrepancy $\sum_{f\in \cf} \delta(f) = \sum_{f\in \cf} \Hm(f) - \sum_{f\in \cf} d(f^0,f^1) $ of $\cf$ can be bounded using \eqref{HF} and the fact that $\Hm(X)=  \sum_{f\in \cf} \Hm(f)$ which we just proved. Recall that if $f$ is an edge, then we set $\delta(f)=0$. Thus the total discrepancy of $\cf_1$ is
\begin{eqnarray*}
\sum_{f\in \cf_1} \delta(f) = \sum_{f\in PE_1} \delta(f)
= \sum_{f\in PE_1} \Hm(f) - \sum_{f\in PE_1} d(f^0,f^1) \\
\leq \Hm(X)-\sum_{e\in E} \Hm(e) - \sum_{f\in PE_1} d(f^0,f^1) \\
\leq^{(\eqref{HF})} \Hm(X)-\sum_{e\in E} \Hm(e) + \Hm(E) - \Hm(X) + 2\eps =   2\eps = 2\Hm(X)/M, 
\end{eqnarray*}

By the same calculations, the total discrepancy of $\cf_i$ is\\ $\frac{2}{M}\sum_{C \text{ is one of the components of step } i} \Hm(C)$. The latter sum is bounded above by $\left( \frac{2}{M}\right)^{(i-1)} \Hm(X)$. Thus the total discrepancy of $\cf$ is $\Hm(X)\sum_i \left( \frac{2}{M}\right)^i$, which by the geometric series formula is a finite number tending to 0 as $M$ tends to infinity. Since we are allowed to choose any $M$ we want, this proves our claim that $\cf$ can be chosen with arbitrarily small total discrepancy.
\end{proof}

\medskip
{\bf Remark 1:} Given any finite set of points $P$ in $X$, we can choose $\cf$ so that $\cls{f}$ is disjoint from $P$ \fe\ $f\in \cf$: 
for if $f\in \cf$ contains a point of $P$ in its closure, we can apply again the above proceedure to $\cf$ instead of $X$, and keep doing so recursively in all \oo\ steps above, to split $f$ into an infinite set of \pe s none of which has a point or an endpoint in $P$.

\medskip
{\bf Remark 2:} Note that \fe\ $f\in \cf$, none of $f^0,f^1$ has an open neighbourhood contained in $f$. Indeed, $f^i$ was always chosen on an arc $\cp$ that has a non-trivial subarc outside $f$. This is important when applying \Lr{pseudo1}.

\medskip
We state the following corollary, obtained by combining \Tr{struct} with the above Remarks and Lemmas \ref{finEpe} and \ref{pseudo1}, in order to explicitely use it in \cite{bmg}.
\begin{corollary} \label{corstruct}
\Fe\ \gl\ continuum $X$ with \finhm, and every $\eps>0$, there is a finite set $\cf$ of pairwise disjoint \pe s of $X$ with the following properties
\begin{enumerate}
\item \label{cfi} $\sum_{f\in \cf} \Hm(f) > \Hm(X)  - \eps$;
\item \label{cfii} $\sum_{f\in \cf} \delta(f)< \eps$;
\item \label{cfiii} $X \sm \bigcup \cf$ has finitely many components, each of which is clopen in $X \sm \bigcup \cf$ and contains a point in $\overline{\cf}$;
\item \label{cfiv} \fe\ $f\in \cf$, and every \gseq\ \seq{G},  $G_n \cap \cls{f}$ is connected and contains a \pth{f^0}{f^1} for almost every $n$;
\item \label{cfv} $\cls{\bigcup \cf}$ avoids any prescribed point of $X$;
\item \label{cfvi} $\cf$ contains any prescribed finite edge-set.
\end{enumerate}
\end{corollary}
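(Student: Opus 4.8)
The plan is to obtain \Cr{corstruct} from \Tr{struct} by truncating the infinite family it provides to a finite one and then quoting the earlier lemmas and the two Remarks for the individual items.

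First I would apply \Tr{struct} with its free parameter $M$ large enough that the infinite set $\cf_\infty$ of pairwise disjoint \pe s it outputs satisfies $\sum_{f\in\cf_\infty}\Hm(f)=\Hm(X)$ and $\sum_{f\in\cf_\infty}\delta(f)<\eps$. To accommodate (vi) I would run the recursion in the proof of \Tr{struct} starting from the prescribed finite edge-set as the initial set $E$, so that it is contained in $\cf_\infty$; to accommodate (v) I would invoke Remark~1 with $P$ the prescribed point, splitting further any $f\in\cf_\infty$ whose closure contains that point, so that $\cls f$ misses it for every $f\in\cf_\infty$. Since $\Hm(X)<\infty$ and the $\Hm(f)$ are non-negative with sum $\Hm(X)$, I then pick a \emph{finite} subfamily $\cf\subseteq\cf_\infty$ with $\sum_{f\in\cf}\Hm(f)>\Hm(X)-\eps$, taking it large enough to contain the first round of the recursion, hence the prescribed edge-set. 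Then (i) holds by choice, (ii) because $\sum_{f\in\cf}\delta(f)\le\sum_{f\in\cf_\infty}\delta(f)<\eps$, (vi) by construction, and (v) because $\cf$ is finite, so $\cls{\bigcup\cf}=\bigcup_{f\in\cf}\cls f$ is a finite union of closed sets each avoiding the prescribed point. Item (iii) is then immediate from \Lr{finEpe} applied to the finite set $\cf$ of pairwise disjoint \pe s.

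The only item calling for real work is (iv). Fix $f\in\cf$ and a \gseq\ $\seq{G}$. By Remark~2, neither $f^0$ nor $f^1$ has an open neighbourhood contained in $f$, so \Lr{pseudo1} applies; inspecting its proof, for almost every $n$ it yields an \arc{f^0}{f^1} that in fact lies in $G_n\cap\cls f$ (the arc built there meets $f$, and $\{f^0,f^1\}$ separates $f$ from $X\sm\cls f$, so its $f^0$--$f^1$ subarc stays in $\cls f$; this arc also furnishes the required \pth{f^0}{f^1}). For the connectedness of $G_n\cap\cls f$ I would argue as follows. Taking $F=\emptyset$ in \eqref{comps} shows $G_n$ is connected for almost every $n$. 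Because $f$ is a \pe, at most one edge of $G_n$ can meet $f$ in an interval having $f^0$ as an endpoint --- two of them would join at $f^0$ into an arc inside $\cls f$ containing the frontier point $f^0$ in its interior, which a \pe\ forbids --- and similarly at $f^1$. Hence $G_n\cap f$ is joined to the rest of $G_n$ only through these (at most two) edge-intervals, and both of them are met by the \arc{f^0}{f^1} above, hence lie in the component $A$ of $G_n\cap f$ containing that arc. Any other component of $G_n\cap f$ would then be clopen in the connected graph $G_n$, hence empty (the case $\cls f=X$ being trivial). Thus $G_n\cap f$ is connected, and so is its closure in $G_n$, which is $G_n\cap\cls f$.

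I expect (iv) to be the main obstacle --- and within it, the step from ``$G_n$ contains an \arc{f^0}{f^1}'' to ``$G_n\cap\cls f$ is connected'', which must use the defining property of a \pe\ (no arc through its frontier) and the connectedness of $G_n$ rather than any new input from \Tr{struct}. The only other point to keep an eye on is that (v) and (vi) are compatible on a single $\cf$ only when the prescribed point avoids the prescribed edge-set, which is the regime in which the corollary is applied.
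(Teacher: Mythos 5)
Your proposal is correct and follows exactly the route the paper intends: the paper offers no proof beyond the one-line remark that the corollary is ``obtained by combining \Tr{struct} with the above Remarks and Lemmas \ref{finEpe} and \ref{pseudo1}'', and you assemble precisely these ingredients (truncation of the family from \Tr{struct} for \ref{cfi}--\ref{cfii}, Remark~1 for \ref{cfv}, seeding the recursion with the prescribed edge-set for \ref{cfvi}, \Lr{finEpe} for \ref{cfiii}, and Remark~2 with \Lr{pseudo1} for \ref{cfiv}). Your added argument deducing connectedness of $G_n\cap\cls f$ from the connectedness of $G_n$ and the defining property of a \pe\ fills in the one step the paper leaves implicit, and it is sound.
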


\section{The intrinsic metric} \label{secint}

The \defi{intrinsic metric} $\rho$ of a metric space $(X,d)$ is defined by 
$$\rho(x,y):= \inf {}_{R \text{ is an \arc{x}{y} }} \ell(R).$$ Note that if $X$ is a \gl\ continuum with $\Hm(X)<\infty$, then $\rho(x,y)$ is always finite by \Lr{cpc}.

Recall that, by \Tr{Xhg}, the completion $\widehat{G^\ell}$ of the union of any \gseq\ \seq{G} of $X$ is homeomorphic to $X$. Using our \pe\ decomposition theorem (\ref{struct}) we can now strengthen this by showing that $\widehat{G^\ell}$ does not depend on the choice of the sequence $(G_n)$:
\begin{theorem}
Let $(X,d)$ be a  \gl\ continuum with $\Hm(X)<\infty$. Then \fe\ \gseq\ \seq{G} of $X$, 
the metric $d_\ell$ of $\widehat{G^\ell}$ coincides with the intrinsic metric of $X$.
\end{theorem}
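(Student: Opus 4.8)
The plan is to prove the two inequalities $d^\ell\ge\rho$ and $d^\ell\le\rho$ separately, where $\rho$ is the intrinsic metric of $(X,d)$ and we identify $\widehat{G^\ell}$ with $X$ via the homeomorphism extending the identity on $G=\bigcup_n G_n$ furnished by \Tr{Xhg}.

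\emph{The inequality $d^\ell\ge\rho$.} For $x,y$ in the dense subset $G$ and any $n$ with $x,y\in G_n$, the value $d^\ell_n(x,y)$ is the infimum of the $\ell_X$-lengths of $x$--$y$ arcs in $G_n$; every such arc is an $x$--$y$ arc in $X$ of the same length, so $d^\ell_n(x,y)\ge\rho(x,y)$, and letting $n\to\infty$ gives $d^\ell(x,y)\ge\rho(x,y)$ on $G\times G$. To pass to all of $X$ I would first show that $\rho$ is lower semicontinuous with respect to $d$: if $x_n\to x$, $y_n\to y$ in $d$ with $\liminf\rho(x_n,y_n)<\rho(x,y)$, pick $x_n$--$y_n$ arcs of uniformly bounded length, parametrize them by arc length, and extract (Arzel\`a--Ascoli, using compactness of $X$) a uniformly convergent subsequence whose limit is a $1$-Lipschitz path $\gamma$ from $x$ to $y$ of length $<\rho(x,y)$; by \Lr{patharc} the image of $\gamma$ contains an $x$--$y$ arc $A$, and $\ell(A)=\Hm(A)\le\Hm(\mathrm{im}\,\gamma)\le\ell(\gamma)<\rho(x,y)$, a contradiction. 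Given this, lower semicontinuity of $\rho$ together with continuity of $d^\ell$ on $\widehat{G^\ell}$ and the fact that $d$- and $d^\ell$-convergence agree (\Tr{Xhg}) upgrades $d^\ell\ge\rho$ to all of $X\times X$. In particular $\rho\le d^\ell$, so the identity $(X,d^\ell)\to(X,\rho)$ is $1$-Lipschitz and $\rho$ is continuous with respect to $d^\ell$.

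\emph{The inequality $d^\ell\le\rho$.} Since $\rho$ is $d^\ell$-continuous and $G$ is dense, it suffices to prove $d^\ell(x,y)\le\rho(x,y)$ for $x,y\in G$; and since $d^\ell=\inf_n d^\ell_n$ and $d^\ell_n(x,y)$ is at most the $\ell_X$-length of any $x$--$y$ walk in $G_n$, it suffices to produce, for each $\eps>0$, one $n$ and one $x$--$y$ walk in $G_n$ of $\ell_X$-length $<\rho(x,y)+\eps$. Fix an $x$--$y$ arc $R$ in $X$ (which exists by \Lr{cpc}) with $\ell(R)<\rho(x,y)+\eps/2$, and apply the \pe\ decomposition theorem in the form \Cr{corstruct} to $X$ with a small parameter and avoiding $x,y$: obtain a finite family $\cf=\{f_1,\dots,f_k\}$ of pairwise disjoint \pe s with $\sum_i\delta(f_i)<\eps/4$, with $X\sm\bigcup\cf$ having finitely many components $C_1,\dots,C_r$ of total $\Hm$ less than $\eps/4$, and such that for all large $n$, $G_n\cap\overline{f_i}$ is connected and contains an $f_i^0$--$f_i^1$ path of $\ell_X$-length at most $\Hm(f_i)$. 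Since $R$ is an arc it passes through each $f_i$ at most once (entering and leaving an open connected set with only two frontier points twice would contradict injectivity, and $x,y\notin\overline{\bigcup\cf}$), so $R$ is a concatenation, in order, of finitely many ``through'' subarcs, each lying in some $\overline{f_i}$ and joining $f_i^0$ to $f_i^1$, together with finitely many ``connecting'' subarcs $\gamma_t$, each lying in the closure of one component $C_{j(t)}$, whose total $\ell_X$-length is at most $\Hm\!\left(X\sm\bigcup\cf\right)<\eps/4$. Replacing each through subarc in $\overline{f_i}$ by the path from $f_i^0$ to $f_i^1$ inside $\overline{f_i}$ given by \Cr{corstruct} costs at most $\Hm(f_i)\le(\text{length of }R\text{ in }\overline{f_i})+\delta(f_i)$, since the length of $R$ inside $\overline{f_i}$ is at least $d(f_i^0,f_i^1)$; summing over the $f_i$ used, the total replacement cost is at most $\ell(R)+\sum_i\delta(f_i)<\ell(R)+\eps/4$. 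It remains to reconnect the pieces across each $\gamma_t$ by a short walk in $G_n$: by the \pe\ version \eqref{comps2} of the connectivity property applied to $F=\cf$, for large $n$ the trace of $G_n$ on each $C_{j(t)}$ is connected, and the endpoints of $\gamma_t$ already lie in $G_n$ and in $\overline{C_{j(t)}}$, so one can route a walk in $G_n$ between them staying inside a thin neighbourhood of $\overline{C_{j(t)}}$ whose $\ell_X$-length is bounded by the $\Hm$ of that neighbourhood; choosing $\cf$ and these neighbourhoods carefully keeps the total cost of all the reconnections below $\eps/4$. Taking $n$ large enough to serve all finitely many $f_i$ and $C_{j(t)}$ involved and concatenating yields an $x$--$y$ walk in $G_n$ of $\ell_X$-length $<\ell(R)+\eps/2<\rho(x,y)+\eps$.

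\emph{The main obstacle.} The genuinely delicate step is the reconnection across the leftover components $C_j$: these carry arbitrarily little Hausdorff measure but may be topologically complicated, and—as Example~2 already shows—an open neighbourhood of such a piece in $X$ can have much larger Hausdorff measure than the piece itself (the ``fat Cantor set'' effect), so a na\"ive diameter or neighbourhood bound need not suffice. Controlling the total cost of these reconnections uniformly, even though their number grows as $\eps\to0$, is exactly what the \pe\ decomposition is designed for and where the bookkeeping must be done with care; one likely has to feed the leftover components back into the decomposition (as in the $\omega$-step iteration inside the proof of \Tr{struct}) rather than dispose of them crudely. Everything else—the two ``easy'' inequalities, the reduction to the dense set $G$, and the treatment of the through-subarcs via \Cr{corstruct}—is routine given the results already established.
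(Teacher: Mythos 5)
Your overall strategy is the same as the paper's: the substantive inequality $d^\ell\le\rho$ is obtained exactly as in the paper, by taking a near-optimal arc $R$, invoking the \pe\ decomposition (\Tr{struct}/\Cr{corstruct}) with small total discrepancy and small leftover measure, rerouting the through-subarcs of $R$ via \Lr{pseudo1} at a cost controlled by $\sum\delta(f)$, and reconnecting inside the leftover components via \eqref{comps2}. Your treatment of the reverse inequality via lower semicontinuity of $\rho$ (Arzel\`a--Ascoli plus \Lr{patharc}) is more explicit than the paper, which treats that direction as immediate from the fact that arcs in $G_n$ are arcs in $X$; that part is fine.

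The one place where your write-up is genuinely incomplete is the step you yourself flag as the ``main obstacle'', and here you have talked yourself into a difficulty that does not exist. You propose to route the reconnecting walks through a ``thin neighbourhood of $\overline{C_{j(t)}}$'' and worry that such a neighbourhood may carry much more \HM\ than $C_{j(t)}$ itself. But no neighbourhood is needed: \eqref{comps2} says precisely that $G_n\cap C$ is connected for large $n$, and the endpoints of each connecting subarc $\gamma_t$ are frontier points of \pe s, hence lie in $X\sm\bigcup\cf$, i.e.\ in $C_{j(t)}$ itself, and lie in $G_n$ by \Lr{pseudo1}. So the reconnecting arc can be taken inside $G_n\cap C_{j(t)}\subseteq C_{j(t)}$. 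After reducing the resulting walk $Q$ to an arc by \Lr{patharc}, the total length of $Q$ inside all leftover components is $\sum_C\ell(Q\cap C)=\sum_C\Hm(Q\cap C)\le\sum_C\Hm(C)<\eps/4$, even when several $\gamma_t$ lie in the same component, because $Q\cap C$ is a subset of $C$. This is exactly how the paper disposes of the reconnections in one line; no iteration of the decomposition over the leftover components is required, and the ``fat Cantor set'' effect is irrelevant since one never leaves $C$. With this substitution your argument closes and coincides with the paper's proof.
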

\begin{proof}
Let $x,y\in X$. It suffices to show that \fe\ $\eps>0$, and $n$ large enough, $G_n$ contains an \arc{x_n}{y_n} of length at most $\rho(x,y)+\eps$, where $d_\ell(x,x_n), d_\ell(y,y_n)< \eps$.

To show this, let $F$ be a finite set of pairwise disjoint \pe s \st\ 
\begin{enumerate}
\item \label{Fi} $\sum_{f\in F} \Hm(f)> \Hm(X)- \eps/2$ and 
\item \label{Fii} $\sum_{f\in F} \del(f)< \eps/2$,
\end{enumerate}
provided by \Tr{struct}. By Remark 1 after the proof of that theorem, we may assume that $x,y\not\in \bigcup F$.

Let $R$ be an \arc{x}{y} in $X$ with $\ell(R)< \rho(x,y)+\eps$, which exists by the definition of $\rho(x,y)$.
By \Lr{pseudo1}, almost every $G_n$ contains an \arc{f^0}{f^1} for every $f\in F$ and, similarly, \fe\ component $C$ of $X \sm \bigcup F$, the subgraph $G_n\cap C$ is connected by \eqref{comps2}. This allows us to replace the subarc of $R$ in any such $f$ or $C$ with an arc in $G_n$ with the same endpoints, except for an initial and final subarc of $R$ that may start in the interior of such a $C$, to obtain an arc $Q$ in $G_n$. We claim that $Q$ has the desired properties, i.e.\ the endpoints of $Q$ are \eps-close to $x,y$, and that $\ell(Q)< \ell(R) +\eps$.

Indeed, for the former claim, note that $\sum_{C\text{ is a component of }X \sm \bigcup F} \Hm(C) <\eps/2$ by \ref{Fi}, and that $d_\ell(x,Q^0)<\Hm(C)$ for the component $C$ containing both of $x,Q^0$.

For the later claim, we have $\sum_{C\text{ is a component of }X \sm \bigcup F} \ell(Q \cap C) \leq \eps/2$ by \ref{Fi}. Moreover, for $f\in F$ we have $\ell(Q \cap f) -\ell(R \cap f) \leq  \Hm(f) - d(f^0,f^1) =: \delta(f)$, and so 
$\sum_{f\in F} \ell(Q \cap f) -\ell(R \cap f) \leq \eps/2$ by \ref{Fii}. Putting these two inequalities together proves that $\ell(Q) -\ell(R)<\eps$ as desired.
\end{proof}

Combining this with \Tr{Xhg} and the fact that complete locally compact spaces are geodesic \cite[Theorem 2.5.23.]{Burago}, we obtain \Cr{corintr}.

\section{Convergence of effective resistances}

In this section we show that if $X$ is a \gl\ continuum with $\Hm(X)< \infty$, then we can associate to any pair of points $p,q\in X$ an effective resistance $R(p,q)$ as a limit of effective resistances $R_{G_n}(p_n,q_n)$ in a \gseq\ \seq{G} of $X$ between points $p_n,q_n$ converging to $p,q$. The interesting point here is that the limit $R(p,q)$ does not depend on the choice of these sequences. Let us first review some basics.

\subsection{Electrical network basics}

\newcommand{\ded}[1]{\ensuremath{\overrightarrow{#1}}}

An \defi{electrical network} is a graph $G$ endowed with an assignment of resistances $r: E \to \R_+$ to its edges. The set \defi{$\arE$} of \defi{directed edges} of \G\ is the set of ordered pairs $(x,y)$ such that $xy\in E$. Thus any edge $e$ of \g with endvertices $x,y$ corresponds to two elements of $\arE$, which we will denote by $\ded{xy}$ and $\ded{yx}$.
A \defi{\flo{p}{q}} of strength $I$ in \g is a function $i: \arE \to \R$ with the following properties
\begin{enumerate}
 \item $i(\ded{e^0 e^1}) = i(\ded{ e^1e^0 })$ \fe\ $e\in E$ ($i$ is antisymmetric);
\item \fe\ vertex $x\neq p,q$ we have  $\sum_{y\in  N(x)} i(\ded{xy}) = 0$, where \defi{N(x)} denotes the set of vertices sharing an edge with $x$ ($i$ satisfies Kirchhoff's node law outside $p,q$);
\item $\sum_{y\in  N(p)} i(\ded{py}) = I$  and $\sum_{y\in  N(q)} i(\ded{qy}) = -I$ ($i$ satisfies the boundary conditions at $p,q$).
\end{enumerate}

The \defi{effective resistance} $R_G(p,q)$ from a vertex $p$ to a vertex $q$ of \g is 
defined by 
$$R_G(p,q):= \inf_{i \text{ is a \flo{p}{q}\ of strength 1}} E(i),$$
 where the \defi{energy} $E(i)$ of $i$ is defined by $E(i) := \sum_{\ded{e}\in \arE}  i(\ded{e})^2 r(e)$. In fact, it is well-known that  this infimum is attained by a unique  \flo{p}{q}, called the corresponding \defi{electrical current}. 

The effective resistance satisfies the following property which justifies its name

\begin{lemma} \label{effres}
Let \g be an electrical network contained in an electrical network $H$ in such a way that there are exactly two vertices $p,q$ of \g connected to vertices of $H - G$ with edges. Then if $H'$ is obtained from $H$ by replacing $G$ with a \edge{p}{q} of resistance $R_G(p,q)$, then \fe\ two vertices $v,w$ of $H'$ we have $R_{H'}(v,w) =R_{H}(v,w) $. 
\end{lemma}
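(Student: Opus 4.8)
The plan is to prove this by a direct comparison of flows. Fix vertices $v,w$ of $H'$; these are also vertices of $H$. The key observation is that $H'$ is obtained from $H$ by a ``series--parallel-type'' replacement localized at the two terminals $p,q$: every path of $H$ that enters the subnetwork $G$ must do so through $p$ or $q$, and since only $p,q$ are joined by edges to $H-G$, the net current flowing into $G$ across the cut $\{p,q\}$ in any $v$--$w$ flow of strength $1$ is some value $J$ at $p$ and $-J$ at $q$ (by Kirchhoff's node law applied to all of $V(G)\setminus\{p,q\}$, summed up). So the restriction of a current in $H$ to $G$ is a $p$--$q$ flow of strength $J$, and conversely.

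First I would set up the correspondence between flows. Given the electrical current $i$ realizing $R_H(v,w)$ (strength $1$), let $J:=\sum_{y\in N(p)\cap V(G)} i(\ded{py})$ be the current it pushes into $G$. By uniqueness of the minimizing flow in $G$ with prescribed terminals and by the scaling $E(\lambda i)=\lambda^2 E(i)$, the restriction $i\restr G$ must be $J$ times the unit current from $p$ to $q$ in $G$ (any cheaper $p$--$q$ flow of strength $J$ inside $G$ could be substituted to lower $E(i)$, contradicting minimality). Hence the energy $i$ spends inside $G$ equals $J^2 R_G(p,q)$, which is exactly the energy that the single replacement edge $pq$ of resistance $R_G(p,q)$ carries when $J$ units flow through it. Now define a flow $i'$ on $H'$ by keeping $i$ on all edges of $H'$ that lie in $H$ (i.e.\ outside $G$) and putting flow $J$ on the new edge $pq$. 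One checks $i'$ satisfies the node law at every vertex of $H'$ (at $p$ and $q$ the contribution of the $G$-part of $i$ is replaced by the single edge carrying the same $J$, so balance is preserved; elsewhere nothing changed) and the boundary conditions at $v,w$; thus $i'$ is a $v$--$w$ flow of strength $1$ in $H'$ with $E(i')=E(i)-J^2R_G(p,q)+J^2R_G(p,q)=E(i)$. Therefore $R_{H'}(v,w)\le E(i')=E(i)=R_H(v,w)$.

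For the reverse inequality I would run the same argument backwards. Let $i'$ be the electrical current of strength $1$ realizing $R_{H'}(v,w)$, let $J'$ be the flow it assigns to the replacement edge $pq$, and build a flow $i$ on $H$ by keeping $i'$ outside $G$ and inserting $J'$ times the unit $p$--$q$ current of $G$ on the edges of $G$. Again $i$ is a legitimate $v$--$w$ flow of strength $1$ in $H$, and $E(i)=E(i')-J'^2R_G(p,q)+J'^2R_G(p,q)=E(i')$, giving $R_H(v,w)\le R_{H'}(v,w)$. Combining the two inequalities yields $R_{H'}(v,w)=R_H(v,w)$.

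The main obstacle, and the one point deserving care, is the claim that the minimizing current $i$ in $H$, restricted to $G$, is exactly a scalar multiple of the unit current of $G$ between $p$ and $q$ — i.e.\ that the ``internal'' behaviour of an energy-minimizing flow is governed by $G$'s own effective resistance. This is where one uses that $p,q$ are the \emph{only} vertices of $G$ attached to the rest of $H$: it guarantees that altering $i$ inside $G$ while fixing the values $i(\ded{py})$ and $i(\ded{qy})$ on edges leaving $\{p,q\}$ keeps $i$ a valid $v$--$w$ flow, so minimality of $E(i)$ forces $i\restr G$ to minimize energy among $p$--$q$ flows of strength $J$ in $G$, which by definition of $R_G(p,q)$ (and uniqueness of the current) pins it down. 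Everything else is the bookkeeping of Kirchhoff's laws and the quadratic scaling of energy, which is routine.
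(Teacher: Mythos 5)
Your proof is correct: the flow-restriction/substitution argument (the restriction of the minimizing current to $G$ is a $p$--$q$ flow of some strength $J$, minimality forces it to dissipate exactly $J^2R_G(p,q)$, and swapping it for the single edge preserves both the flow conditions and the energy, in either direction) is precisely the standard proof. The paper itself gives no argument here, stating only that the lemma ``follows easily from the definition'' and deferring to the cited reference, so your write-up supplies exactly the intended reasoning.
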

The proof of this follows easily from the definition of effective resistance. See e.g.\ \cite{LyonsBook} for details. 

\medskip
Any metric graph naturally gives rise to an electrical network by setting $r= \ell$, and we will assume this in the following section whenever talking about effective resistances in metric graphs.

\medskip
The following lemma shows that effective resistances can only decrease when contracting part of a metric graph. Define the total length $\ell(H)$ of a metric graph \g by $\ell(H) = \sum_{e\in E(H)} \ell(e)$.
\begin{lemma}\label{rcontr}	
Let \g be a finite metric graph and $G'$ be obtained from \g by contracting a connected subgraph $H$ to a vertex or contracting a subarc of an edge to a point. Then for any two vertices $p,q \in V(G)$, we have $  R_{G'}(\pi(p),\pi(q)) \in [R_G(p,q) - \ell(H), R_G(p,q)]$, where $\pi$ denotes the contraction map from $V(G)$ to $V(G')$.
\end{lemma}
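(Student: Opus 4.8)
The plan is to establish the two inequalities separately. The upper bound $R_{G'}(\pi(p),\pi(q))\le R_G(p,q)$ is an instance of \Rml: contracting a connected subgraph $H$, or a subarc of an edge, amounts to lowering the resistances of the affected edges to $0$, which cannot increase any effective resistance. Concretely I would take the electrical current $i$ from $p$ to $q$ in $G$, push it forward to $G'$ by keeping its values on the edges outside $H$ (and transferring the value of a contracted subarc to the shortened edge), note that the result is still a \flo{p}{q}\ of strength $1$ in $G'$ — conservation at the contracted vertex holds because the contributions of the edges of $H$ cancel when summed over $V(H)$ — and observe that its energy equals $E(i)$ minus the non-negative contribution of the edges of $H$; hence $R_{G'}(\pi(p),\pi(q))\le E(i)=R_G(p,q)$.

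For the lower bound I would lift in the opposite direction. Assume first $p,q\notin V(H)$, so $\pi$ is injective away from the contracted vertex $h$, and the edges of $G'$ incident to $h$ correspond exactly to the edges of $G$ with precisely one endpoint in $V(H)$. Let $i'$ be the electrical current of strength $1$ from $\pi(p)$ to $\pi(q)$ in $G'$, so $E(i')=R_{G'}(\pi(p),\pi(q))$. Since $i'$ is the electrical current, its support, oriented by the sign of $i'$, contains no directed cycle, so $i'$ decomposes as a sum $\sum_j\sigma_j P_j$ of finitely many $\pi(p)$--$\pi(q)$ path flows with $\sigma_j>0$ and $\sum_j\sigma_j=1$. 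Each $P_j$ visits $h$ at most once; if it does, it enters $h$ through a boundary edge attached to some $u_j\in V(H)$ and leaves through one attached to some $w_j\in V(H)$. I then define a \flo{p}{q}\ $i$ in $G$ by replacing, in each $P_j$ that meets $h$, the passage through $h$ by a fixed $u_j$--$w_j$ arc inside $H$ (which exists since $H$ is connected) carrying $\sigma_j$ units, and setting $i:=\sum_j\sigma_j\widehat P_j$ for the resulting family of $p$--$q$ paths $\widehat P_j$.

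Two things then need checking: that $i$ is a \flo{p}{q}\ of strength $1$ — immediate, since each $\widehat P_j$ is a $p$--$q$ path and $\sum_j\sigma_j=1$ — and the energy estimate. For the latter, $i$ agrees with $i'$ on every edge outside $H$, while on each edge $e\in E(H)$ only the rerouted portions contribute, so $|i(e)|\le\sum_{j:\,h\in P_j}\sigma_j\le 1$, whence $i(e)^2 r(e)=i(e)^2\ell(e)\le\ell(e)$. Summing, $E(i)\le E(i')+\sum_{e\in E(H)}\ell(e)=R_{G'}(\pi(p),\pi(q))+\ell(H)$, and therefore $R_G(p,q)\le E(i)\le R_{G'}(\pi(p),\pi(q))+\ell(H)$, as required.

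It remains to treat the degenerate cases, which are easy. If exactly one of $p,q$ lies in $V(H)$, the same lifting works, a path flow of $i'$ now starting or ending at $h$ being routed from $p$ (resp.\ to $q$) to its attachment vertex inside $H$; the total rerouted amount is still at most $1$. If both $p,q\in V(H)$ then $\pi(p)=\pi(q)$, so $R_{G'}(\pi(p),\pi(q))=0$ and the claim is just $R_G(p,q)\le\ell(H)$, witnessed by the unit flow along any $p$--$q$ path in $H$. When $H$ is a subarc of a single edge, $G'$ merely replaces that edge $e$ by a shorter edge $e'$ with the same endpoints, every unit flow of $G'$ lifts to $G$ by setting $i(e):=i'(e')$ with all other values unchanged, conservation is automatic, and the extra energy is $i'(e')^2(\ell(e)-\ell(e'))\le\ell(H)$ because $|i'(e')|\le 1$ for the electrical current. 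The only delicate point in the whole argument is the inequality $|i(e)|\le 1$ for $e\in E(H)$; this is precisely what acyclicity of the electrical current in $G'$ delivers, since it guarantees that at most one unit of flow ever has to be rerouted through $H$.
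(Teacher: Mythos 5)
Your proof is correct and follows essentially the same route as the paper's: the upper bound by pushing the unit current forward under the contraction, and the lower bound by lifting the current of $G'$ back to $G$ and bounding the extra energy inside $H$ by $\ell(H)$ via the fact that no edge carries more than one unit of the unit current. The paper states this in two sentences and leaves the lifting and the path-decomposition justification of $|i(e)|\le 1$ implicit; your write-up supplies exactly those details.
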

\begin{proof}
Any flow on \g naturaly induces a flow on $G'$ by ingoring contracted edges. Thus the upper bound $  R_{G'}(\pi(p),\pi(q')) \leq R_G(p,q)$ is obvious from the definition of $r$. For the lower bound, note that the energy dissipated inside $H$ is at most $\ell(H)$ since it is well-known that no edge carries a flow greater than 1 in the $p$-$q$~current of strength 1.
\end{proof}

\subsection{Convergence of effective resistances for \gseq s}

The main result of this section is \Tr{rconv}, which we restate for the convenience of the reader
\newtheorem*{empth}{Theorem 1.1}
\begin{empth} 
Let $X$ be a \gl\ space with \finhm\ and $\seq{G}$ be a \gseq\ of $X$. 
Then \fe\ two sequences \seq{p}, \seq{q} with $p_n,q_n \in G_n$, each converging to a point in $X$, 
the effective resistance $R_{G_n}(p_n,q_n)$ converges to a value $R(x,y)$ independent of the choice of the sequences $(p_n), (q_n)$ and $(G_n)$.
\end{empth}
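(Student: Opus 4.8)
The plan is to show that for every $\eps>0$ there is a finite electrical network $N_\eps$, depending only on $X$ and $\eps$ (not on the \gseq\ or on the sequences $(p_n),(q_n)$), together with two of its vertices $a,b$, such that
\[
|R_{G_n}(p_n,q_n)-R_{N_\eps}(a,b)|\le 2\eps\qquad\text{for all sufficiently large }n.
\]
Granting this, applying it to $\seq{G}$ itself shows that $(R_{G_n}(p_n,q_n))_{\nin}$ is Cauchy, hence convergent; and applying it to two different \gseq s, or to two different choices of $(p_n),(q_n)$, shows that the limit does not depend on these choices. We then define $R(p,q)$ to be this common limit.

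To construct $N_\eps$, fix $\eps>0$ and use \Tr{struct} --- in the form of \Cr{corstruct} together with Remark~1 following the proof of \Tr{struct} --- to obtain a finite set $\cf$ of pairwise disjoint \pe s of $X$ with $\sum_{f\in\cf}\Hm(f)>\Hm(X)-\eps$, with $\sum_{f\in\cf}\delta(f)<\eps$, such that $X\sm\bigcup\cf$ has only finitely many components, each clopen in $X\sm\bigcup\cf$, and such that $\overline{\bigcup\cf}$ avoids both $p$ and $q$. For $n$ large, \Cr{corstruct} and \Lr{pseudo1} guarantee that each endpoint $f^0,f^1$ is a vertex of $G_n$ and that $G_n\cap\overline f$ is connected and contains an \arc{f^0}{f^1}; consequently every edge of $G_n$ lies either inside some $\overline f$ (no edge can straddle $\partial f$, since $f$ is open with $\partial f=\{f^0,f^1\}$ a pair of vertices) or has its interior in a unique component $C$ of $X\sm\bigcup\cf$. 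Writing $B^n_f:=G_n\cap\overline f$ and $Z^n_C$ for the closure in $G_n$ of $G_n\cap C$, the graph $G_n$ is thus built from the connected blocks $B^n_f$ $(f\in\cf)$ and $Z^n_C$, glued along the finite set $\bigcup_{f\in\cf}\{f^0,f^1\}$, with $B^n_f$ meeting the rest of $G_n$ only in $\{f^0,f^1\}$ and $Z^n_C=G_n\cap C$ connected for $n$ large by \eqref{comps2}. Contract each $Z^n_C$ to a single vertex $v_C$: by \Lr{rcontr} this alters the effective resistance between any two vertices by at most $\sum_C\ell(Z^n_C)\le\sum_C\Hm(C)\le\Hm(X)-\sum_{f\in\cf}\Hm(f)<\eps$. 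Then, by \Lr{effres}, replace each block $B^n_f$ by a single edge $e_f$ joining $v_{C(f^0)}$ to $v_{C(f^1)}$ of resistance $\rho^n_f:=R_{B^n_f}(f^0,f^1)$, where $C(x)$ denotes the component of $X\sm\bigcup\cf$ containing $x$. The resulting network $\widehat G_n$ has vertex set $\{v_C\}$ and edge set $\{e_f:f\in\cf\}$ --- a combinatorial type depending only on $X$ and $\cf$ --- and $R_{G_n}(p_n,q_n)$ differs from $R_{\widehat G_n}(v_{C_p},v_{C_q})$ by less than $\eps$ (here $p_n\in Z^n_{C_p}$ and $q_n\in Z^n_{C_q}$ for $n$ large because $p,q\notin\overline{\bigcup\cf}$). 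Finally, let $N_\eps$ be the network with the same vertex and edge set but with each $e_f$ carrying the \emph{fixed} resistance $d(f^0,f^1)$, and set $a:=v_{C_p}$, $b:=v_{C_q}$.

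It remains to bound $|R_{\widehat G_n}(a,b)-R_{N_\eps}(a,b)|$, for which two facts suffice. \emph{(i) $|\rho^n_f-d(f^0,f^1)|\le\delta(f)$.} For the upper bound, $\rho^n_f\le\ell(A)=\Hm(A)\le\Hm(\overline f)=\Hm(f)=d(f^0,f^1)+\delta(f)$, where $A$ is an \arc{f^0}{f^1} in $B^n_f$ and we use that $\seq{G}$ is a \gseq. For the lower bound, let $i$ be the unit $f^0$--$f^1$ current in $B^n_f$; decomposing $i$ into path flows and using that every \arc{f^0}{f^1} in $B^n_f\subseteq X$ has length $\ge d(f^0,f^1)$ gives $\sum_e|i(e)|\ell(e)\ge d(f^0,f^1)$, whence by Cauchy--Schwarz
\[
\rho^n_f=E(i)=\sum_e i(e)^2\ell(e)\ \ge\ \frac{\big(\sum_e|i(e)|\ell(e)\big)^2}{\sum_e\ell(e)}\ \ge\ \frac{d(f^0,f^1)^2}{\Hm(f)}\ \ge\ d(f^0,f^1)-\delta(f).
\]
\emph{(ii)} For a fixed finite network with fixed terminals, the effective resistance is $1$-Lipschitz in the $\ell^1$-norm of the vector of edge resistances, since $\partial R/\partial r_e$ equals the squared current through $e$ in the unit current flow, which is at most $1$. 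As $\widehat G_n$ and $N_\eps$ share the same underlying graph and terminals and their resistance vectors differ coordinatewise by at most $\delta(f)$, (i) and (ii) give $|R_{\widehat G_n}(a,b)-R_{N_\eps}(a,b)|\le\sum_{f\in\cf}\delta(f)<\eps$. Altogether $|R_{G_n}(p_n,q_n)-R_{N_\eps}(a,b)|<2\eps$ for all large $n$, as required.

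The main obstacle is the lower bound $\rho^n_f\ge d(f^0,f^1)-\delta(f)$ in (i): a priori $B^n_f$ could contain many parallel $f^0$--$f^1$ routes and hence have effective resistance well below $d(f^0,f^1)$, but the bound $\Hm(\overline f)\le d(f^0,f^1)+\delta(f)$ with $\delta(f)$ small forces $B^n_f$ to be ``almost a single arc'', which the Cauchy--Schwarz estimate makes quantitative. The other delicate (though routine) point is the block decomposition of $G_n$ and the verification that $\widehat G_n$ has a combinatorial type independent of $n$ and of the \gseq; this rests entirely on \Cr{corstruct}, \Lr{pseudo1}, \Lr{effres}, \Lr{rcontr} and \eqref{comps2} --- that is, on the fact that for $n$ large $G_n$ respects the pseudo-edge decomposition of $X$ up to contractions of total length $<\eps$.
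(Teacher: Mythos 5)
Your proposal is correct and follows the same overall architecture as the paper's proof: fix $\eps$, take a finite set of pairwise disjoint \pe s with small total discrepancy and almost full measure via \Tr{struct}, use \Lr{pseudo1} and \eqref{comps2} to see that for large $n$ the graph $G_n$ decomposes into blocks over the \pe s glued along their endpoints plus small pieces in the components of the complement, reduce each block to a single edge by \Lr{effres}, contract the complementary pieces at a cost of at most $\sum_C \Hm(C)<\eps$ by \Lr{rcontr}, and observe that the resulting network has a combinatorial type depending only on $X$ and the chosen \pe\ decomposition. Where you genuinely diverge is in the two quantitative sub-steps. First, for the estimate $|R_{H_f}(f^0,f^1)-d(f^0,f^1)|\le\delta(f)$ (numerically identical to the paper's bound $R_{H_f}(f^0,f^1)\ge 2d(f^0,f^1)-\Hm(f)$ from \Lr{rpseudoe}), you replace the paper's contraction argument --- collapsing the components of $H_f-P$ onto a shortest path --- by a Cauchy--Schwarz bound on the energy of the unit current; this is shorter and avoids \Lr{rpseudoe} entirely, though the intermediate claim $\sum_e|i(e)|\ell(e)\ge d(f^0,f^1)$ should be justified a little more carefully: either note that the energy-minimizing current is acyclic, so its path decomposition is coherent on every edge, or (cleaner) pair the flow against the $1$-Lipschitz potential $v\mapsto d_X(f^0,v)$. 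Second, to compare the reduced networks you invoke the fact that effective resistance is $1$-Lipschitz in the $\ell^1$-norm of the edge-resistance vector (via $\partial R/\partial r_e=i(e)^2\le 1$), whereas the paper compares all the $G''_n$ to a common further contraction with edge lengths $2d(f^0,f^1)-\Hm(f)$ using \Lr{rcontr} twice; both yield an error of order $\sum_f\delta(f)$. Your route buys a fixed comparison network $N_\eps$ defined purely from $X$, which makes the independence from $(G_n),(p_n),(q_n)$ immediate, at the price of importing the (standard but unproved-in-the-paper) Lipschitz fact; the paper's route stays entirely within its own Lemmas \ref{effres} and \ref{rcontr}.
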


We will prove this by combining our structure theorem \Tr{struct} with the following fact, which can be thought of as a weaker version of \Tr{rconv} when $X$ happens to be a \pe.

\begin{lemma} \label{rpseudoe}
Let $f$ be a \pe\ of $X$ and $H$ a finite connected graph in $\overline f$ containing both $f^0,f^1$. Assign to each edge of $H$ a length (or resistance) equal to the length of the corresponding arc in $X$. Then 
$\Hm(f) \geq R_H(f^0,f^1) \geq 2d(f^0,f^1) - \Hm(f)$.
\end{lemma}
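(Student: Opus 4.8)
The plan is to prove the two inequalities separately. The upper bound $\Hm(f)\ge R_H(f^0,f^1)$ I would get directly from the flow definition of effective resistance: since $H$ is connected and contains $f^0,f^1$, it contains an $f^0$--$f^1$ path $P$, and the flow of strength $1$ sending one unit along $P$ and nothing elsewhere is an $f^0$--$f^1$ flow of energy $\ell(P)$, so $R_H(f^0,f^1)\le\ell(P)$. Now $P$ is an arc of $X$ contained in $\overline f$, hence $\ell(P)=\Hm(P)\le\Hm(\overline f)=\Hm(f)$, where the last equality uses that $\overline f\setminus f=\partial f$ is a two-point set and thus $\Hm$-null, and I am using the stated fact that the length of an arc equals its $1$-dimensional Hausdorff measure.

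For the lower bound I would first dispose of the trivial case: if $2d(f^0,f^1)-\Hm(f)\le0$ there is nothing to prove since $R_H(f^0,f^1)\ge0$; so assume $2d(f^0,f^1)>\Hm(f)$, in particular $f^0\ne f^1$ (as $|\partial f|=2$), so $H$ has at least one edge and $\ell(H)>0$. The geometric input is $\ell(H)\le\Hm(f)$: $H$ is the union of its finitely many edges, each an arc of $X$ whose length in $H$ is by hypothesis its length as an arc of $X$, i.e.\ its $\Hm$, and these arcs overlap only in the finitely many vertices of $H$, so $\ell(H)=\sum_{e\in E(H)}\ell(e)=\Hm(H)\le\Hm(\overline f)=\Hm(f)$. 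The electrical input is the dual (Dirichlet) variational bound: for every vertex function $\phi$ on $H$,
\[ R_H(f^0,f^1)\ \ge\ \frac{\bigl(\phi(f^0)-\phi(f^1)\bigr)^2}{\sum_{e\in E(H)}\bigl(\phi(e^0)-\phi(e^1)\bigr)^2/\ell(e)}. \]
I would apply this with the test potential $\phi(x):=d(f^0,x)$. For each edge $e$ of $H$, an arc of $X$ of length $\ell(e)$ joining $e^0$ to $e^1$, the triangle inequality gives $|\phi(e^0)-\phi(e^1)|\le d(e^0,e^1)\le\ell(e)$, so the denominator above is at most $\sum_e\ell(e)=\ell(H)\le\Hm(f)$, while the numerator equals $d(f^0,f^1)^2$. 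Hence $R_H(f^0,f^1)\ge d(f^0,f^1)^2/\Hm(f)$, and the proof is completed by the elementary identity
\[ \frac{d(f^0,f^1)^2}{\Hm(f)}-\bigl(2d(f^0,f^1)-\Hm(f)\bigr)=\frac{\bigl(d(f^0,f^1)-\Hm(f)\bigr)^2}{\Hm(f)}\ \ge\ 0. \]

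The only step that is not entirely routine is the Dirichlet bound, which is not stated in the paper (only the Thomson, i.e.\ flow-minimisation, definition of $R_H$ is given). It is standard (see \cite{LyonsBook}), but a self-contained derivation of exactly what is needed is short: summation by parts turns Kirchhoff's node law for a unit $f^0$--$f^1$ flow $i$ into $\sum_{e\in E(H)}\bigl(\phi(e^0)-\phi(e^1)\bigr)i(\are)=\phi(f^0)-\phi(f^1)$ for every $\phi$ (and every choice of orientation of the edges), and applying Cauchy--Schwarz to this identity, splitting each summand as $\tfrac{\phi(e^0)-\phi(e^1)}{\sqrt{\ell(e)}}\cdot i(\are)\sqrt{\ell(e)}$, gives $\bigl(\phi(f^0)-\phi(f^1)\bigr)^2\le\bigl(\sum_e(\phi(e^0)-\phi(e^1))^2/\ell(e)\bigr)E(i)$; taking the infimum over unit flows $i$ yields the displayed inequality. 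Apart from this, the points needing a little care are the trivial-case split opening the lower-bound argument and the bookkeeping ensuring that the $\ell(e)$ appearing as resistances are the very lengths that make $\ell(H)\le\Hm(f)$; both are straightforward.
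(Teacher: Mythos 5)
Your proof is correct, and the lower bound is obtained by a genuinely different route from the paper's. The paper takes a shortest \pth{f^0}{f^1}\ $P$ in $H$ and repeatedly contracts each component $C$ of $H-P$ together with the minimal subpath $P_C$ of $P$ it attaches to; the shortest-path property gives $\ell(P_C)\le\ell(C)$, so $H$ contracts onto a path of length at least $2\ell(P)-\ell(H)\ge 2d(f^0,f^1)-\Hm(f)$, and \Lr{rcontr} (Rayleigh monotonicity under contraction) finishes the job. You instead invoke the dual (Dirichlet) variational characterisation with the explicit test potential $\phi=d(f^0,\cdot)$, whose edge increments are controlled by the edge lengths; both arguments ultimately rest on the same geometric fact $\ell(H)\le\Hm(f)$, which you justify more carefully than the paper does. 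Your route buys three things: it yields the strictly stronger bound $R_H(f^0,f^1)\ge d(f^0,f^1)^2/\Hm(f)$, from which $2d(f^0,f^1)-\Hm(f)$ follows by completing the square; it avoids the slightly delicate bookkeeping of the iterated contraction; and it makes no use of the hypothesis that $f$ is a \pe\ (the paper needs it to argue that $f^0,f^1$ have degree $1$ in $H$), so your statement holds for an arbitrary finite connected graph in $\overline f$ containing $f^0,f^1$. The price is importing the Dirichlet principle, which the paper never states, but your Cauchy--Schwarz derivation from the flow definition is complete and consistent with the paper's conventions. The upper bound is proved exactly as in the paper. One small point worth making explicit: in the nontrivial case $2d(f^0,f^1)>\Hm(f)$ one has $\Hm(f)\ge d(f^0,f^1)>0$ by the paper's inequality \eqref{Hld}, so the divisions by $\Hm(f)$ are legitimate.
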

\begin{proof}
By definition, $H$ contains a \pth{f^0}{f^1}\ $P$. Since $\ell(P) \leq \Hm(f)$, the upper bound follows from the definition of $R$ since any flow in $P$ is a flow in $H$ and  $R_P(f^0,f^1) = \ell(P)$.

For the lower bound, assume \obda\ that our path $P$ is a shortest \pth{f^0}{f^1}\ in $H$. Since $f$ is a \pe, each of $f^0,f^1$ has degree 1 in $H$, for otherwise $H$ would contain an open arc containing one of $f^0,f^1$. 
We claim that $H$ can be contracted onto an \pth{f^0}{f^1} $P'$ with $\ell(P')\geq 2d(f^0,f^1) - \Hm(f)$, from which the assertion follows by \Lr{rcontr}. To perform this contraction, set $H_0=H, P_0= P$, and pick a component $C=C_0$ of $H - P$ if one exists. Let $P_C$ be the minimal subpath of $P$ containing all vertices of $P$ sending an edge to $C$ ($P_C$ might be a trivial path consisting of a vertex only). Then contract $P_C \cup C$ to a point. This contracts $H_0$ to a new graph $H_1$, and  $P_0$ to a \pth{f^0}{f^1}\ path $P_1$ in $H_1$. 
Repeat this process as often as needed, untill $ H_k= P_k$, and set $P':= P_k$.

To see that $\ell(P') $ satisfies the desired bound, recall that $P$ was a shortest \pth{f^0}{f^1}\ in $G$, and so $\ell(P_{C_i}) \leq \ell(C_i)$ for all components $C=C_i$ as above, for otherwise we could shortcut $P$ using a path in $C_i$. This means that 
$$\ell(P) - \ell(P') = \sum_i \ell(P_{C_i}) \leq \sum_i \ell(C_i) = \ell(H) - \ell(P) \leq \Hm(f) - \ell(P).$$ 

Thus we obtain $\ell(P') \geq 2\ell(P)- \Hm(f) $, and as $\ell(P)$ must be at least the distance of its endpoints, our claim follows.

\end{proof}

\begin{proof}[Proof of \Tr{rconv}]
Let $(p_n), (q_n)$ and $(G_n)$ be sequences as in the assertion. If we can prove that 
$R_{G_n}(p_n,q_n)$ converges, then the independence statement follows since we can combine any two candidate sequences into a new sequence by using alternating members.

\Fe\ $\eps>0$, we will find $i$ \leth\ $|R_{G_n}(p_n,q_n) - R_{G_m}(p_m,q_m)|< \eps$ \fe\ $n,m>i$, which implies convergence.

By \Tr{struct}, we can find a finite set $F$ of disjoint \pe s \st\ $\sum_{f\in F} \Hm(f) > \Hm(X) - \eps/4$ and $\sum_{f\in F} \delta(f)< \eps/4$. 

By \Lr{pseudo1}, we can find $i$ \leth\ $G_n$ cointais an \arc{f^0}{f^1}\ for every $f\in F$ and $n>i$ ---here we used Remark 2 after the proof of \Tr{struct}--- and by \eqref{comps2} we can even assume that $G_n \cap C$ is connected \fe\ component $C$ of $X \sm \bigcup F$. By Remark 1 after the proof of \Tr{struct}, we may assume that $F$ does not contain $\lim p_n$ or $\lim q_n$. Thus choosing $i$ a bit larger if necessary, we may assume that all $p_n$ lie in the same component of $X \sm \bigcup F$ for $n>i$, and similarly for the $q_n$.

Let $G'_n$ be the graph obtained from $G_n$ by replacing, for each $f\in F$, the subgraph $H_f:= G_n\cap f$ with an $f^0$-$f^1$~edge $e_f$ of length equal to the effective resistance $R_{H_f}(f^0,f^1)$. By \Lr{effres}, we have 

\labtequ{eqr}{$R_{G_n}(p_n,q_n) = R_{G'_n}(p_n,q_n)$.}

Now contract, for each component $C$ of $X \sm \bigcup F$, the subgraph $C\cap G'_n$ of $G'_n$ ---recall that we chose $i$ \leth\ this subgraph is always connected--- to a vertex, to obtain a new graph $G''_n$. Note that $G''_n$ is isomorphic to $G''_m$ as graphs for any $n,m>i$, only their edge-lengths can vary. We will use \Lr{rpseudoe} to deduce that they cannot vary too much. Indeed, applying that lemma to the graph $H_f$ yields the uniform lower bound $2d(f^0,f^1) - \Hm(f)$ for the length of the edge $e_f$ of $G''_n$ that replaced $f$. Let \g be the metric graph isomorphic to all $G''_n$ for $n>i$ in which each edge  $e_f$ is given length equal to this bound $2d(f^0,f^1) - \Hm(f)$. Note that each $G''_n$ can be contracted onto \g by contracting a subarc of each edge. Thus, by \Lr{rcontr}, we have $R_{G}(p,q) \in [R_{G''_n}(p,q)- \ell, R_{G''_n}(p,q)]$ for any two vertices $p,q$, where $\ell$ denotes the contracted length $\sum_{f\in F} R_{H_f}(f^0,f^1) - (2d(f^0,f^1) - \Hm(f) )$. 

By \Lr{rpseudoe}, we have 
$$\ell\leq \sum_{f\in F} \left( \Hm(f) - (2d(f^0,f^1) - \Hm(f) ) \right) = 2\sum_{f\in F}(\Hm(f) - d(f^0,f^1)) = 2 \sum_{f\in F} \delta(f)$$ 
by the definition of the discrepancy $\delta(f)$. Recall that we chose $F$ so that $\sum_{f\in F} \delta(f)< \eps/4$, and so we have proved that $R_{G}(p,q)$ differs from each $R_{G''_n}(p,q)$ by at most $\eps/4$.

Since we obtained the $G''_n$ from $G'_n$ by contracting connected subgraphs, applying \Lr{rcontr} again yields
$R_{G''_n}(p,q) \in [R_{G'_n}(p',q')- \ell', R_{G'_n}(p',q')]$ for any two points $p',q'$ mapped to $p,q$ by these contractions, where $\ell'= \sum \ell(C\cap G'_n)\leq \sum \Hm(C)$, the sum ranging over all components $C$ of $X \sm \bigcup F$. Recall that we chose $F$ so that $\sum_{f\in F} \Hm(f) > \Hm(X) - \eps/4$, and as $F$ and these components decompose $X$, we obtain $\ell' < \eps/4$.

The last two bounds combined imply that $|R_{G}(p,q)- R_{G'_n}(p',q')|< \eps/2$. Using our assumption that $p_n,q_n$ lie in the same component of $X \sm \bigcup F$ \fe\ $n>i$, we can apply this bound with $p'= p_n, q'=q_n$. Using \eqref{eqr} now proves our aim 
$|R_{G_n}(p_n,q_n) - R_{G_m}(p_m,q_m)|< \eps$.
\end{proof}

A \defi{local isometry} is a mapping $f$ from a metric space $(X,d)$ to a  metric space $(Y,g)$ \st\ \fe\ $x\in X$ \ti\ a neighbourhood $U\ni x$ \st\ $d(y,z) = g(f(y),f(z)$ \fe\ $y,z\in U$.   It is straightforward to check that local isometries preserve the lengths of topological paths. Since the edge-lengths of the graphs in our \gseq s are chosen to coinside with the lengths of the corresponding arcs of $X$, the values $R(x,y)$ in \Tr{rconv} is not affected by any change in the metric of $X$ that does not affect lengths of arcs. In particular, we have 
\begin{corollary} \label{lociso}
The values $R(p,q)$ in \Tr{rconv} are invariant under any homeomorphism that is a local isometry.
\end{corollary}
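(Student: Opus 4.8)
The plan is to transport a \gseq\ of $X$ across the homeomorphism and then invoke \Tr{rconv} on both the source and the target. Let $h\colon (X,d)\to(Y,g)$ be a homeomorphism that is also a local isometry (the case $Y=X$ is the one of interest, but nothing is gained by restricting to it). First I would check that $Y$ inherits the hypotheses of \Tr{rconv}: it is a continuum because $h$ is a homeomorphism; it is \gl\ because edges are defined purely topologically, so $h(E)$ is a \des\ of $Y$ whenever $E$ is a \des\ of $X$; and $\Hm(Y)=\Hm(X)<\infty$ since a local isometry preserves the length of every topological path, hence of every arc, hence ---by the local nature of Hausdorff measure--- the $1$-dimensional \HM.

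Next, given a \gseq\ $\seq{G}$ of $X$ together with sequences $p_n\to p$, $q_n\to q$ as in \Tr{rconv}, I would set $G_n':=h(G_n)$, $p_n':=h(p_n)$, $q_n':=h(q_n)$, $p':=h(p)$, $q':=h(q)$, and verify that $\seq{G'}$ is a \gseq\ of $Y$. Each $G_n'$ is a finite graph topologically embedded in $Y$, being a homeomorphic image of a finite graph, and we may take its simplicial structure to be the $h$-image of that of $G_n$. Property \eqref{comps} transfers because every finite edge-set $F'$ of $Y$ equals $h(F)$ for the finite edge-set $F:=h^{-1}(F')$ of $X$, and $h$ maps the components of $X\sm\bigcup F$ bijectively onto those of $Y\sm\bigcup F'$ and those of $G_n\sm\bigcup F$ onto those of $G_n'\sm\bigcup F'$. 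The length condition in the definition of a \gseq\ transfers because $h$ preserves arc lengths: the length of the arc $h(e)$ in $Y$ equals that of $e$ in $X$, which equals the length assigned to $e$ in $G_n$. Finally, $p_n'\to p'$ and $q_n'\to q'$ by continuity of $h$.

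Then I would use that effective resistance is an invariant of finite weighted graphs up to isomorphism. After declaring $p_n,q_n$ and their $h$-images to be vertices (which changes neither underlying space), the restriction of $h$ to $G_n$ is a homeomorphism onto $G_n'$ carrying vertices to vertices and each edge to an edge of equal length, i.e.\ an isomorphism of electrical networks, so $R_{G_n}(p_n,q_n)=R_{G_n'}(p_n',q_n')$ for every $n$. Applying \Tr{rconv} to $X$ with $\seq{G}$ and to $Y$ with $\seq{G'}$, the two sides of this equality converge respectively to $R(p,q)$ and to $R(h(p),h(q))$; since the two real sequences coincide term by term, so do their limits, which is exactly the claim.

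I do not expect a genuine obstacle: the only non-formal inputs are the two facts already isolated in the paragraph preceding the statement, namely that local isometries preserve lengths of topological paths (and hence of arcs, and hence $\Hm$), and the purely bookkeeping assertion that the homeomorphic image of a \gseq\ is again a \gseq. The corollary amounts to the observation that the construction of $R(p,q)$ never uses more of $d$ than the lengths it assigns to arcs.
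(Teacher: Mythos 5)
Your proposal is correct and follows essentially the same route as the paper, which justifies the corollary by the single observation that local isometries preserve arc lengths and that the construction of $R(p,q)$ uses the metric of $X$ only through the lengths it assigns to arcs (via the edge-lengths of the \gseq). You simply make this explicit by transporting the \gseq\ across the homeomorphism and comparing the effective resistances term by term, which is a more careful write-up of the same argument rather than a different one.
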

We remark that it has been proved that a  local isometry from a continuum to itself is a homeomorphism \cite{Calka}.

\section{Outlook}

We showed that one can define an effective resistance metric on \gl\ continua as a limit of effective resistances 
on a \gseq\ (\Tr{rconv}). Similarly, a \BM\ is constructed on  \gl\ continua as a limit of \BM s on a \gseq\ \cite{bmg}. The same approach should also yield extentions of other analytic objects to \gl\ continua, e.g.\ Laplacians, harmonic functions, Dirichlet forms, etc.

\medskip
Not every space with finite one-dimensional Hausdorff measure is \gl. Consider for example a space $X$ obtained from the real unit interval by attaching to the $i$th rational, in any enumeration of $\Q \cap [0,1]$, an arc of length $2^{-i}$. Thus $\Hm(X)=2$ but $X$ has no edge. However, one can ask

\begin{problem}
Does every space $X$ with $\Hm(X)<\infty$ admit a basis with finite frontiers? Can such an $X$ be obtained from a \gl\ space $X'$ with $\Hm(X')<\infty$ by metric contraction?
\end{problem}

In \Sr{gseqs} we constructed sequences of metric graphs that approximate our space $X$ well. It would be interesting to consider the converse question: 
\begin{question}
Under what conditions does a sequence of finite metric graphs converge (e.g.\ in the Gromov-Hausdorff sense \cite{Burago}) to a \gl\ space?
\end{question}

\medskip 

\comment{
1-dim Hausdorff measure and length are different as shown by Sierpinski example

\begin{problem}
If $X$ has finite length, does the metric constructed above have 1-dim Hausdorff measure equal to $\ell(X)$? 
\end{problem}
YES, EASY

\begin{problem}
If \finl\ does \ltp\ have  1-dim Hausdorff measure equal to $\ell$?
\end{problem}
YES, EASY

\begin{problem}
Let \seq{G}\ be a sequence of metric subgraphs of a complete \gl\ space $X$ that converges to $G$ in Gromov-Hausdorff sense and $\lim \ell(G_i)= \ell(G)< \infty$. Then \ti\ a non-expanding  homeomorphism from $X$ to $\hat{G}$. It is an isometry \iff\ $X$ is a length space. 
\end{problem}

\begin{problem}

\end{problem}
}

\note{
\section{Maybe..}

\begin{lemma} \label{EF}
Let $E$ and $F$ be two \des~s of a space $X$. Then $\bigcup E \sydi \bigcup F$ is totally disconnected.
\end{lemma}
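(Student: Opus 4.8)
The plan is to show directly that $Y := \bigcup E \sydi \bigcup F$ contains no connected subset with more than one point, which is precisely what total disconnectedness of $Y$ (with the subspace topology) means. So I would fix a connected $C \subseteq Y$, assume $|C| \ge 2$, and aim for a contradiction.

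The first step is to split $C$ using the two edge-sets. Put $A := C \cap \bigcup E$ and $B := C \cap \bigcup F$. Since $Y = (\bigcup E \sm \bigcup F) \cup (\bigcup F \sm \bigcup E)$, every point of $C$ that lies in $\bigcup E$ avoids $\bigcup F$ and vice versa, so $A \cap B = \emptyset$ and $C = A \cup B$. Moreover $A = \bigcup_{e\in E}(C\cap e)$ and $B = \bigcup_{f\in F}(C\cap f)$ are open in $C$, because edges are open subspaces of $X$. Hence $A$ and $B$ are complementary open subsets of $C$, i.e.\ both are clopen in $C$; connectedness of $C$ then forces one of them to be empty, and by symmetry I may assume $B = \emptyset$. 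Thus $C \subseteq \bigcup E$ and $C \cap \bigcup F = \emptyset$.

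Next I would exploit that the members of $E$ are pairwise disjoint and individually open in $X$: each $e \in E$ is then clopen in the subspace $\bigcup E$ (open because $e$ is open in $X$; closed because $\bigcup E \sm e$ is the union of the remaining edges, again open in $X$). So the edges partition $\bigcup E$ into relatively clopen pieces, and the connected set $C$ must lie inside a single edge $e_0 \in E$. Now $C$ is a connected subspace with at least two points of $e_0 \cong (0,1)$, so under a homeomorphism $e_0 \cong (0,1)$ it corresponds to an interval of $\R$ with more than one point, and therefore contains a nondegenerate closed subinterval $I$, which is connected and non-singleton. But $I \subseteq C$ is disjoint from $\bigcup F$, so $I \subseteq X \sm \bigcup F$; since $F$ is a \des, the space $X \sm \bigcup F$ is totally disconnected, contradicting the existence of the connected non-singleton $I$. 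This contradiction proves the lemma.

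There is no real obstacle here; the one point worth stating carefully is the double use of connectedness — first to get the clopen dichotomy $C = A \sqcup B$ from the fact that $C \subseteq Y$, and then the observation that $\bigcup E$ carries the disjoint-sum topology of its edges, which confines $C$ to a single edge. It is also worth noting that the argument uses nothing about $X$ beyond the fact that $E$ and $F$ are edge-sets whose complements in $X$ are totally disconnected — in particular $X$ need not be a continuum.
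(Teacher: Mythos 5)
Your proof is correct. Note that the paper states this lemma only in an unused notes section ("Maybe...") and supplies no proof at all, so there is no argument of the author's to compare against; yours is the natural one. Both appeals to connectedness are sound: since $\bigcup E$ and $\bigcup F$ are open, the symmetric difference splits any connected subset $C$ into two disjoint relatively open pieces, forcing $C$ into (say) $\bigcup E\sm\bigcup F$; pairwise disjointness and openness of the edges then confine $C$ to a single edge, and a non-singleton connected subset of an edge that avoids $\bigcup F$ contradicts the total disconnectedness of $X\sm\bigcup F$ (indeed $C$ itself already does, so the passage to a closed subinterval $I$ is superfluous but harmless). Your closing observation is also accurate: the argument needs nothing about $X$ beyond $E$ and $F$ being disconnecting edge-sets, consistent with the lemma's hypothesis of an arbitrary space.
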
 

Let $E=e_1,\ldots$ be a \des. Contract each component of $X \sm \bigcup E_n$ to obtain $G'_n$. Note that $G'_{n+1}$ can be contracted to $G'_n$. Let $(\invg,d)$ be the metric inverse limit.

\begin{theorem}
\invg\ is homeomorhpic to $X$.
\end{theorem}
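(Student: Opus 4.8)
The plan is to identify $X$ with $\invg$ via the obvious map. Recall $E=\{e_1,e_2,\dots\}$ is the fixed \des\ (countable by \Lr{count}), $E_n=\{e_1,\dots,e_n\}$, and $G'_n$ is obtained from $X$ by collapsing each component of $X\sm\bigcup E_n$ to a point; write $\pi_n\colon X\to G'_n$ for this quotient map (it is the identity on $\bigcup E_n$). Since $E_n\subseteq E_{n+1}$, each component of $X\sm\bigcup E_{n+1}$ lies inside a component of $X\sm\bigcup E_n$, so $\pi_n=\pi_{n+1,n}\circ\pi_{n+1}$, where $\pi_{n+1,n}\colon G'_{n+1}\to G'_n$ is the bonding contraction; hence $(\pi_n(x))_n$ is a compatible sequence and $\phi\colon X\to\invg$, $x\mapsto(\pi_n(x))_n$, is well defined. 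A preliminary observation I would record first is that every component $K$ of $X\sm\bigcup E_n$ is closed in $X$: no interior point of an edge of $E_n$ can lie in $\overline K$ (it has a neighbourhood contained in that edge, hence disjoint from $K$), so $\overline K\subseteq X\sm\bigcup E_n$, where $K$ is already closed by \Lr{finE}. This, together with the finiteness of the component set (\Lr{finE}) and the local connectedness of $X$ (\Tr{glocon}), makes each $G'_n$ a genuine finite graph and each $\pi_n$ continuous, so $\phi$ is continuous; as the assertion is purely topological, the particular metric $d$ making $\invg$ an inverse limit of metric spaces is irrelevant.

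Next I would show $\phi$ is bijective. For surjectivity, let $(x_n)\in\invg$ and put $F_n:=\pi_n^{-1}(x_n)$: each $F_n$ is non-empty and closed in $X$ (it is either a singleton lying on some $e_i$, or a component of $X\sm\bigcup E_n$, closed by the observation above), and $\pi_n=\pi_{n+1,n}\circ\pi_{n+1}$ gives $F_{n+1}\subseteq F_n$; by compactness of $X$ there is some $x\in\bigcap_n F_n$, and then $\phi(x)=(x_n)$. For injectivity, take $x\neq y$ in $X$. If, say, $x$ lies on some $e_i\in E$, then for $n\ge i$ we have $\pi_n(x)=x$, an interior point of the edge $e_i$ of $G'_n$, whereas $\pi_n(y)$ is a different point of $G'_n$ (another point of $\bigcup E_n$, or a vertex), so $\phi(x)\neq\phi(y)$. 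Otherwise $x,y\in X\sm\bigcup E$; if $\pi_n(x)=\pi_n(y)$ for every $n$ then $x$ and $y$ lie in a common component $K_n$ of $X\sm\bigcup E_n$ for every $n$, and the $K_n$ are nested, compact and connected, so their intersection is connected. But $\bigcap_n K_n\subseteq\bigcap_n(X\sm\bigcup E_n)=X\sm\bigcup E$, which is totally disconnected since $E$ is a \des; hence $\bigcap_n K_n$ is a single point, contradicting $x\neq y$. So $\phi$ is injective.

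Finally, $\phi$ is a continuous bijection from the compact space $X$ to the Hausdorff (metric) space $\invg$, hence a homeomorphism, which proves the theorem. There is no serious obstacle here: the argument is the standard ``a compact space is the inverse limit of a cofinal family of its finite quotients''. The one step where substantive information about $X$ really enters is the injectivity argument, which rests on the defining property of a \des\ (total disconnectedness of $X\sm\bigcup E$) together with the fact that a nested intersection of continua is connected; the remaining work --- that collapsing the finitely many closed components yields a genuine finite graph with $\pi_n$ continuous --- is routine, supplied by \Lr{finE} and \Tr{glocon}, but is the one place where I would be careful about topological pathologies.
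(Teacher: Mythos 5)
Your proof is correct. You should be aware, though, that the paper itself contains no proof of this statement to compare against: the theorem sits in an unfinished ``Maybe'' section (inside a \texttt{note} macro that is not even rendered in the compiled document) and its proof environment is left completely empty. What you give is the standard identification of a compact Hausdorff space with the inverse limit of its quotients under a refining sequence of upper semicontinuous decompositions, and the steps all check out: the bonding maps exist because each component of $X\sm\bigcup E_{n+1}$ lies in a component of $X\sm\bigcup E_n$; surjectivity is the usual nested-nonempty-closed-sets argument in the compact space $X$ (the context requires $X$ to be a \gl\ continuum, so compactness is available); and the only place where the hypothesis that $E$ is a \des\ genuinely enters is injectivity, where you correctly combine the fact that a nested intersection of continua is a continuum with the total disconnectedness of $X\sm\bigcup E$, so that $\bigcap_n K_n$ is a singleton. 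The one point I would spell out rather than wave at is that each $G'_n$ with the quotient topology really is a finite graph, and in particular Hausdorff; this follows because the decomposition consists of finitely many pairwise disjoint closed sets (\Lr{finE}, together with your observation that the components of $X\sm\bigcup E_n$ are closed in $X$) plus singletons, so saturated separating open sets can be produced by normality of $X$ --- local connectedness is not actually needed for this. Your caveat about the unspecified metric $d$ on \invg\ is also apt: the paper never defines it, and the statement only makes sense once one agrees that $d$ induces the inverse limit topology, after which the final appeal to ``continuous bijection from a compact space to a Hausdorff space'' finishes the argument exactly as you say.
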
 
\begin{proof} 

\end{proof}

\section{Lengths and geodesics}

Define the \defi{length} of a \gl\ space $X$ by $\ell(X):= \sup_{E \text{ is a \des\ of $X$}} \ell(E)$. 
(The infimum is always 0, as any edge has a Cantor set of Lebesque measure arbitrarily close to its length.) 

Our next result shows that $\ell(X)$ is attained by some \des.
\begin{lemma}\label{attain}	
Let $X$ be a \gl\ continuum. Then $X$ has a \des\ of length $\ell(X)$.
\end{lemma}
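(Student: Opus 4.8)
The plan is to amalgamate a length‑maximising sequence of \des s into one set and then cut it back into edges without losing length. Fix \des s $E_1,E_2,\dots$ of $X$ with $\ell(E_k)\to\ell(X)$; such exist because $\ell(X)$ is by definition a supremum, and if $\ell(X)=\infty$ we just require $\ell(E_k)\ge k$. Each $E_k$ is countable by \Lr{count}, so $U:=\bigcup_k\bigcup E_k$ is a union of countably many edges $g_1,g_2,\dots$ of $X$; it is open, and $X\sm U$ is closed, hence compact, and totally disconnected since $X\sm U\subseteq X\sm\bigcup E_1$. As $\Hm(e)=\ell(e)$ for each edge $e$ and $\Hm$ is countably additive over the disjoint edges of a \des, $\Hm(\bigcup E_k)=\ell(E_k)$, so $\Hm(U)\ge\sup_k\ell(E_k)=\ell(X)$. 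It therefore suffices to find a \des\ $F$ with $\bigcup F\subseteq U$ and $\Hm(\bigcup F)=\Hm(U)$: then $\ell(F)=\Hm(\bigcup F)=\Hm(U)\ge\ell(X)$, while $\ell(F)\le\ell(X)$ by definition of $\ell(X)$, so $\ell(F)=\ell(X)$ --- and this also covers $\ell(X)=\infty$.

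Next I would isolate the points where $U$ fails to be a $1$-manifold. If $p\in U$ is not an endpoint of any $g_i$, then $p$ lies in some $g_i$, an open neighbourhood of $p$ in $U$ homeomorphic to $(0,1)$, so $p$ is a manifold point; hence the set $P$ of non-manifold points of $U$ lies in $\bigcup_i\partial g_i$ and is countable (and closed in $U$). Thus $W:=U\sm P$ is open in $X$ and is a $1$-manifold without boundary, so each of its components is homeomorphic to $(0,1)$ or to a circle. In each non-circle component I would pick a bi-infinite strictly increasing sequence of points accumulating only at the two ends of the component, and in each circle component two points; the open sub-arcs cut out are pairwise disjoint edges of $X$ (their closures are genuine arcs), and their union is the component minus a countable set. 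Let $F$ be the resulting countable edge-set and $P'$ the countable set of these cut points, so $\bigcup F=U\sm(P\cup P')$ and hence $\Hm(\bigcup F)=\Hm(U)$, since $P\cup P'$ is countable and so $\Hm$-null.

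It remains to verify $F$ is a \des, i.e.\ that $Y:=X\sm\bigcup F=(X\sm U)\cup(P\cup P')$ is totally disconnected. Since $\bigcup F$ is open, $Y$ is closed in $X$, hence compact and metric, and it is the union of the compact totally disconnected (so zero-dimensional) set $X\sm U$ with countably many singletons, each closed in $Y$; by the countable sum theorem for covering dimension of separable metric spaces, $\dim Y\le 0$, so $Y$ is totally disconnected. (One can avoid dimension theory here: a non-degenerate subcontinuum $Q$ of $Y$ would have to meet the open set $U$, since $X\sm U$ is totally disconnected; but then inside a small closed ball contained in $U$ around a point of $Q\cap U$, the component of that point in $Q$ is, by the boundary bumping lemma, a non-degenerate --- hence uncountable --- connected set lying in $U\cap Y=P\cup P'$, which is impossible.)

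The reason the obvious candidate --- taking $F$ to be the set of components of $U$ itself, or of $W$ --- fails is that such a component need not be an edge: its closure can fail to be an arc (it may be a ray spiralling onto a subcontinuum of $X\sm U$), and it can even be a whole circle. This forces the subdivision step, after which the delicate point is re-proving that the enlarged exceptional set $(X\sm U)\cup(P\cup P')$ is still totally disconnected. That is not automatic from $X\sm U$ being totally disconnected, but it does follow because the set added is a countable union of closed zero-dimensional pieces; this, together with the routine $\Hm$-accounting, is where the work lies.
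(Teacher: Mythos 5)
The paper offers no proof of this lemma at all: it sits in a disabled draft section with an empty \verb|proof| environment, so there is nothing to compare your argument against, and I have assessed it on its own. It is correct. The measure-theoretic accounting is sound ($\Hm$ is a metric outer measure, hence countably additive on the disjoint open edges, giving $\Hm(\bigcup E_k)=\ell(E_k)$ and $\Hm(U)\ge\ell(X)$, while countable sets are $\Hm$-null); the subdivision step is legitimate, since the components of the open, locally connected set $U$ are open (hence countably many) copies of $\R$ or $S^1$, and the open subintervals you cut out have compact closures inside their component, so they really are edges of $X$ with two frontier points; and you correctly identify the genuinely delicate point, namely that $X\sm\bigcup F=(X\sm U)\cup P\cup P'$ must be re-proved totally disconnected, which both of your arguments (the countable closed sum theorem for dimension zero, and the boundary-bumping argument forcing an uncountable connected subset of the countable set $P\cup P'$) handle correctly, using that $Y$ is compact so its components are subcontinua. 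Two minor remarks: your set $P$ is in fact empty, since \emph{every} point of $U$ lies in some open edge $g_i$, which is already an open neighbourhood in $X$ homeomorphic to $(0,1)$, so the non-manifold-point discussion is vacuous (though harmless); and you could streamline by taking $U$ to be the union of \emph{all} edges of $X$ rather than of a length-maximising sequence of \cutr{} --- sorry, disconnecting --- edge-sets, since every \des\ is contained in that union.
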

\begin{proof}

\end{proof}
}

\bibliographystyle{plain}
\bibliography{../collective}
\end{document}